\newtheorem{theorem}{Theorem}[section]
\newtheorem{Remark} [theorem]{Remark}
\newtheorem{Counter-example}[theorem]{Counter example}
\newtheorem{Claim}[theorem]{Claim}
\newtheorem{Lemma}[theorem]{Lemma}
\newtheorem{Proposition}[theorem]{Proposition}
\newtheorem{Definition}[theorem]{Definition}
\newtheorem{Conjecture}[theorem]{Conjecture}
\newtheorem*{theorem*}{Theorem}
\DeclareMathOperator*{\Dim}{Dim}
\newcommand{\supp}{\text{supp}}
\newcommand{\diam}{\text{diam}}
\newcommand\blfootnote[1]{%
  \begingroup
  \renewcommand\thefootnote{}\footnote{#1}%
  \addtocounter{footnote}{-1}%
  \endgroup
}
\providecommand{\customgenericname}{}
\newcommand{\newcustomtheorem}[2]{%
  \newenvironment{#1}[1]
  {%
   \renewcommand\customgenericname{#2}%
   \renewcommand\theinnercustomgeneric{##1}%
   \innercustomgeneric
  }
  {\endinnercustomgeneric}
}
\title{Improved versions of some  Furstenberg type slicing Theorems for self-affine carpets}
\author{Amir Algom and Meng Wu\blfootnote{M.W. is supported by  the Academy of Finland, project grant No. 318217.}}
\date{}
\begin{document}
\maketitle
\abstract{ Let $F$ be a Bedford-McMullen carpet defined by independent integer exponents. We prove that for every  line $\ell \subseteq \mathbb{R}^2$ not parallel to the major axes,
$$ \dim_H (\ell \cap F) \leq \max \left\lbrace 0,\, \frac{\dim_H F}{\dim^* F} \cdot (\dim^* F-1) \right\rbrace$$
and
$$ \dim_P (\ell \cap F) \leq \max \left\lbrace 0,\, \frac{\dim_P F}{\dim^* F} \cdot (\dim^* F-1) \right\rbrace$$
where $\dim^*$ is Furstenberg's star dimension (maximal dimension of microsets). This improves the state of art results on Furstenberg type slicing Theorems for affine invariant carpets.}

\section{Introduction} 
\subsection{Background and main results} \label{introduction}
Let $n\geq 2$ be an integer  and consider the $n$-fold map of the unit interval $T_n:[0,1]\rightarrow [0,1)$ 
\begin{equation} \label{Eq Tn}
T_n (x)=n\cdot x \mod 1.
\end{equation}
We say that integers $m,n\geq 2$ are independent, and write $m\not \sim n$, if $\frac{\log m}{\log n} \notin \mathbb{Q}$. In the 1960's Furstenberg formulated several Conjectures aiming to capture the  idea that if $m\not \sim n$ then expansions in base $n$ and in base $m$ should have no common structure. In 1967,  Furstenberg \cite{furstenberg1967disjointness} proved a landmark result of this form: If a closed subset of the torus $\mathbb{T}:=\mathbb{R}/\mathbb{Z}$ is invariant under both $T_m$ and $T_n$ then, assuming $m\not \sim n$, it is either finite or the entire torus. The measure theoretic analogue of this result, known as the $\times 2, \times 3$ Conjecture, remains open to this day: if $\mu$ is a Borel probability measure on $\mathbb{T}$, invariant under $T_m$ and $T_n$, then it is a convex combination of the Lebesgue measure and a purely atomic measure.

Some of the aforementioned conjectures that Furstenberg proposed are more geometric in nature. One of them is known as the Slicing Conjecture:  For $(u,t)\in  \mathbb{R} \times \mathbb{R}$ let $\ell_{u,t}$ denote the planar line with slope $u$ that intersects the $y$-axis at $t$ (notice that we exclude from notation lines that are parallel to the $y$-axis). For a set $A\subseteq \mathbb{R}^d$ we denote its  Hausdorff dimension by $\dim_H A$.
\begin{Conjecture} \cite{furstenberg1970intersections} \label{Conj Furs}
Let $\emptyset \neq X_1,X_2 \subseteq [0,1]$ be closed sets that are invariant under $T_m$ and $T_n$, respectively. If $m\not \sim  n$ then for all $u\neq 0$ and $t\in \mathbb{R}$,
\begin{equation} \label{Eq Furs bound}
\dim_H \left(X_1 \times X_2 \right)  \cap \ell_{u,t} \leq \max \lbrace 0, \, \dim_H X_1 + \dim_H X_2 -1 \rbrace.
\end{equation}
\end{Conjecture}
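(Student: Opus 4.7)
The plan is to proceed by contradiction and exploit the joint $T_m \times T_n$ invariance of $X_1\times X_2$ to force rigidity via the Diophantine condition $m\not\sim n$. Suppose some $\ell_{u,t}$ with $u\neq 0$ violates the bound. By Frostman's lemma I would pick a measure $\nu$ supported on $(X_1\times X_2)\cap \ell_{u,t}$ with exponent $s > \dim_H X_1 + \dim_H X_2 - 1$, and, using the variational principle for invariant compact sets, simultaneously choose an ergodic $T_m\times T_n$-invariant measure $\mu$ on $X_1\times X_2$ whose dimension approximates $\dim_H X_1 + \dim_H X_2$. The aim is then to show that the conditional measures of such a $\mu$ along lines of slope $u$ have dimension at most $\dim \mu - 1$, contradicting the existence of $\nu$.

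The second step is to import the CP-chain and scenery-flow machinery of Furstenberg and Hochman. Successive $(T_m^k\times T_n^k)$-magnifications of $\mu$ around a $\mu$-typical point converge along subsequences to a CP-distribution $Q$; the novelty relative to the one-dimensional setting is that the magnification group is generated by the pair $(\log m,\log n)$, and the Diophantine condition will play the role of an irrational-rotation hypothesis. I would reformulate the slicing statement as an assertion about disintegrations of $Q$-typical micromeasures along the pencil of lines of slope $u$: by a Ledrappier--Young style slice-entropy formula, slice dimensions are controlled by the local projection entropies $H(\mu,\mathcal{D}_k \mid \pi_u^{-1}\mathcal{D}_k)$, where $\pi_u(x,y)=y-ux$ is the projection orthogonal to the slicing direction.

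The third and main step is to show these conditional entropies are small on average. This is where $m\not\sim n$ enters decisively: the logarithmic orbit of $(n^{-k},m^{-k})$ equidistributes in angle, so a typical micromeasure of $\mu$ is not aligned with the coordinate axes, and Furstenberg's projection theorem for $T_m\times T_n$-invariant measures forces $\dim(\pi_u\mu)=\min(\dim\mu,1)$. A local, scale-uniform version of this identity, established through a Hochman-type entropy-averaging argument along the CP-chain, would upgrade the global projection statement to the pointwise slice bound $\dim(\mu_{\ell_{u,t}})\leq \dim\mu-1$, delivering the contradiction.

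The principal obstacle is precisely this uniformity at every scale. Furstenberg's projection theorem controls only the global dimension of $\pi_u\mu$, whereas the slicing conclusion demands entropy bounds at essentially every dyadic scale and for almost every fiber. Bridging this gap is the hard part, and seems to require either an inverse theorem for entropy growth under convolution, which detects the arithmetic alignment responsible for any entropy deficit, or a direct CP-chain analysis of the scenery restricted to the slicing direction, using the irrationality of $\log m/\log n$ to rule out the coincidences between $m$-adic and $n$-adic grids that would otherwise permit atypical slices.
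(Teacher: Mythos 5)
This statement is stated in the paper as a Conjecture of Furstenberg and is not proved there; the paper cites the proofs of Shmerkin and Wu and only uses related machinery for its own carpet theorem. Your proposal is a strategy outline in the spirit of those proofs, but it contains a genuine logical gap at the very first reduction. You propose to choose an ergodic $T_m\times T_n$-invariant measure $\mu$ on $X_1\times X_2$ of nearly full dimension and to show that its conditional measures along lines of slope $u$ have dimension at most $\dim\mu-1$. That statement only controls $\mu$-almost every line in the pencil, which is exactly what Marstrand's theorem already gives; the content of the conjecture is the bound for \emph{every} line $\ell_{u,t}$, including lines that are null for every invariant measure and whose intersection with $X_1\times X_2$ is invisible to any disintegration of $\mu$. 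You never connect the Frostman measure $\nu$ on the specific slice to the conditionals of $\mu$ (and there is no such connection in general: the conditional of $\mu$ on a fixed null fiber is not even well defined). The working proofs instead magnify the slice measure $\nu$ itself to produce a CP-distribution whose typical micromeasures live on product-structured microsets, which is a different starting point from yours.

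The second gap is the one you yourself flag: passing from the projection theorem $\dim(\pi_u\mu)=\min(\dim\mu,1)$ to a slice bound valid at every fiber requires entropy control that is uniform across scales and translates, and this is precisely where the real work lies (Shmerkin's inverse theorem for entropy of convolutions, or Wu's use of Sinai's factor theorem to force uniform distribution of the rotation by $\log n/\log m$ along the magnification times). Since you explicitly leave this step open and the preceding reduction does not reach the exceptional lines at all, the proposal does not constitute a proof; it is an accurate map of where the difficulties are, but both the entry point and the core estimate still need to be supplied.
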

This Conjecture is a geometric manifestation of the idea ``if $m\not \sim n$ then expansions in base $n$ and in base $m$  have no common structure" in the sense that a slice of dimension larger than expected can be seen as some shared structure between $X_1$ and $X_2$. To explain why the  term on the right hand side of  \eqref{Eq Furs bound} is the expected bound, we recall the classical Marstrand slicing Theorem: For any set $X\subseteq \mathbb{R}^2$ and any  fixed slope $u$,
\begin{equation} \label{Eq. Marstrand}
\dim_H X\cap \ell_{u,t} \leq \max \lbrace 0,\, \dim_H X -1 \rbrace \text{ for Lebesgue almost every } t, 
\end{equation}
and this fails for any smaller value on the right hand side of  \eqref{Eq. Marstrand}. It is well known that for sets $X_1$ and $X_2$ as in Conjecture \ref{Conj Furs} 
$$\dim_H X_1 \times X_2 = \dim_H X_1 + \dim_H X_2.$$
So,  what Furstenberg conjectured is that for  $X=X_1 \times X_2$ as in Conjecture \ref{Conj Furs}, Marstrand's Theorem  holds for \textit{all} lines $\ell_{u,t}$ such that $u\neq 0$, that is, lines not parallel to the major axes.

Some progress towards Conjecture \ref{Conj Furs} was made by Furstenberg himself in \cite{furstenberg1970intersections}, Wolff \cite{Wolff1999Kakeya}, and later by Feng, Huang and Rao \cite{feng2014affine}. In 2016 the Conjecture was  proved simultaneously and independently by  Shmerkin \cite{shmerkin2016furstenberg} and Wu \cite{wu2016proof}. In the case when $\dim_H X_1 + \dim_H X_2\le 1$, Yu \cite{Yu2021Wu} has simplified Wu's arguments and obtained some quantitative improvement to \eqref{Eq Furs bound}. Austin \cite{austin2020new} recently gave a new short proof of Conjecture \ref{Conj Furs}. 

The phenomenon predicted by Furstenberg was later shown to hold, in an appropriate sense, in a  class of sets  that strictly includes certain product sets as in Conjecture \ref{Conj Furs}, called Bedford-McMullen carpets. These carpets are defined as follows: let $m, n$ be integers greater than one.  Let 
\begin{equation*}
\emptyset \neq  D \subseteq \lbrace 0,...,m-1 \rbrace \times \lbrace 0,...,n-1 \rbrace 
\end{equation*}
and define
\begin{equation*}
F = \left\lbrace (\sum_{k=1} ^\infty \frac{x_k}{m^k}, \sum_{k=1} ^\infty \frac{y_k}{n^k}) :\quad  (x_k,y_k) \in D\right\rbrace.
\end{equation*}
The set $F$ is then called a Bedford-McMullen carpet with defining exponents $m,n$, and allowed digit set $D$.  They are named after Bedford \cite{bedford1984crinkly} and McMullen \cite{mcmullen1984hausdorff} who calculated their dimensions. 

To recall the latest  results about slicing Theorems for Bedford-McMullen carpets we need the notion\footnote{This is the same notion as Assouad dimension (see e.g. \cite{fraser2020book}), but for consistency with previous papers on the subject we work here with $*$-dimension.}  of star-dimension: For a set $A \subseteq [0,1]^d$ we define
\begin{equation*} 
\dim^* A:= \sup \lbrace \dim_H M: \, M  \text{ is a microset of } A \rbrace
\end{equation*}
where microsets of $A$ are limits in the Hausdorff metric on subsets of $[-1,1]^2$ of ``blow-up" of increasingly small balls about points in $A$ (see e.g. \cite[Section 2.2]{algom2018slicing} for more details). This notion was introduced and studied by Furstenberg in \cite{furstenberg2008ergodic}.  Mackay \cite{mackay2011assouad} gave a closed combinatorial formula for $\dim^* F$ for any Bedford-McMullen carpet $F$ in terms of $m,n$ and $D$. 

Returning to slicing theorems, Algom \cite[Theorem 1.2]{algom2018slicing} proved that for any Bedford-McMullen carpet  $F$ with independent exponents $m\not \sim n$
\begin{equation} \label{Algom slicing}
\dim^* F \cap \ell_{u,t} \leq \max \lbrace \dim^* F -1, 0 \rbrace,\quad \text{ for all } (u,t)\in \mathbb{R}^2 \text{ such that } u\neq 0.
\end{equation}
We remark that very recently  B\'{a}r\'{a}ny,   K\"{a}enm\"{a}ki, and Yu \cite{Barany2021finer}, obtained similar results about slices through some non-carpet planar self-affine sets. Now, it is known \cite[Chapter 4]{bishop2013fractal} that for any Bedford-McMullen carpet $F$, writing $\dim_B F$ for its  box dimension and $\dim_P F$ for its packing dimension, 
\begin{equation} \label{inequalties dim}
\dim_H F \leq \dim_P F =\dim_B F \leq \dim^* F
\end{equation}
and that these inequalities are strict unless $F$ is Ahlfors regular.  So, for Ahlfors regular carpets the results of \cite{algom2018slicing} are optimal for all notions of dimension previously discussed. However, in some sense ``most" Bedford-McMullen carpets are not Ahlfors regular  \cite[Chapter 4]{bishop2013fractal}. It is thus the main purpose of this paper to improve \eqref{Algom slicing} for both the Hausdorff and the packing dimension of slices in the non-Ahlfors regular setting, and to relate them to the corresponding dimensions of the underlying carpet. Here is our main result:

\begin{theorem} \label{Main Theorem}
Let $F$ be a Bedford-McMullen carpet with exponents $(m,n)$. If $m \not \sim n$ then  for all $u\neq 0$ and $t\in \mathbb{R}$,
\begin{enumerate}
\item $ \dim_H (\ell \cap F) \leq \max \left\lbrace 0, \, \frac{\dim_H F}{\dim^* F} \cdot (\dim^* F-1) \right\rbrace$.

\item $\dim_P (\ell \cap F) \leq \max \left\lbrace 0, \, \frac{\dim_P F}{\dim^* F} \cdot (\dim^* F-1) \right\rbrace$.
\end{enumerate}
\end{theorem}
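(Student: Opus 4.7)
The strategy is to refine Algom's microset-level bound \eqref{Algom slicing} by inserting a factor of $\dim_H F/\dim^* F$ (respectively $\dim_P F/\dim^* F$) that quantifies how far $F$ is from being Ahlfors regular. Following \cite{algom2018slicing}, code $F$ by infinite sequences of pairs $(x_k,y_k)\in D$ and $\ell\cap F$ by those sequences whose $y$-coordinate is the prescribed function of the $x$-coordinate determined by $(u,t)$. Assuming without loss of generality that $m<n$, the analysis takes place at the natural two-scale grid: columns of width $m^{-k}$ subdivided into cells of side $n^{-k}$.

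Algom's argument provides, for each column that the line visits, a Furstenberg-type upper bound on the number of $n^{-k}\times n^{-k}$ cells meeting $\ell\cap F$; summed uniformly across columns it yields $\dim^*(\ell\cap F)\le \dim^* F-1$, through Mackay's closed formula for $\dim^* F$. The new ingredient is to sum the per-column bound against the weights given by McMullen's formula for $\dim_H F$ (respectively the analogous weights for $\dim_P F=\dim_B F$), which are concentrated on columns with large $N_j$ but not only on the extremal ones. Since McMullen's expression realises $\dim_H F$ as a weighted average of $\log N_j$ while $\dim^* F$ uses only $\log \max_j N_j$, this weighted sum produces exactly the ratio $\dim_H F/\dim^* F$ in the final covering bound. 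Converting the resulting covers into Hausdorff- and packing-measure estimates then yields the two inequalities of the theorem.

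The main obstacle is the column-averaging step: one must exclude the pathological scenario in which $\ell\cap F$ concentrates entirely in the extremal columns, where Algom's bound is already sharp and no improvement is possible. The independence hypothesis $m\not\sim n$ enters precisely here, through the joint equidistribution of the orbits of $T_m$ and $T_n$ that drives the Furstenberg--Wu--Shmerkin slicing theorem; it prevents the $x$- and $y$-coordinates along $\ell$ from staying synchronised with any fixed column pattern across scales, and so forces the column visits to be weighted by the Hausdorff-dimension-maximising Bernoulli measure on $F$ rather than by counting measure. Making this weighted equidistribution quantitative and scale-uniform is, in my view, the principal technical hurdle, and is where one would expect to invoke or adapt the CP-chain and local-entropy-average machinery used in the recent proofs of the Furstenberg slicing conjecture.
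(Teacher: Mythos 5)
There is a genuine gap, and it sits exactly at the point your sketch identifies as the ``main technical hurdle'' but then resolves with a mechanism that is not correct and, in any case, would not produce the stated bound. You propose that the ratio $\dim_H F/\dim^* F$ arises because the independence $m\not\sim n$ forces the column visits of $\ell\cap F$ to be ``weighted by the Hausdorff-dimension-maximising Bernoulli measure.'' Equidistribution of the rotation orbit $\{k\theta\}$ does not control the digit frequencies of the $y$-coordinate of points on the slice; those frequencies can be arbitrary, and nothing forces them toward the McMullen measure. Moreover, if such an equidistribution did hold, the natural conclusion would be a bound of the shape $\dim_H F-1$ (a genuine Marstrand-type bound), not the ratio $\frac{\dim_H F}{\dim^* F}(\dim^* F-1)$; the paper explicitly states that the stronger bound is out of reach of its methods. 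So your proposed mechanism both overshoots what can be proved and does not explain where the ratio actually comes from.

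In the paper the ratio is produced by an optimization over an auxiliary parameter, not by a weighted column count. One takes a Frostman measure $\mu_0$ on $\ell\cap F$, builds a CP distribution $Q$ from its magnifications along approximate squares, and sets $\lambda=Q(\{\mu:\dim\mu>0\})$. Two separate inequalities are then proved: (a) $\gamma\le\lambda(\dim^* F-1)$, where the $-1$ saving comes from Wu's geometric consequence of Sinai's factor theorem applied only to the positive-dimensional ergodic components (this is where $m\not\sim n$ enters, via unique ergodicity of $R_\theta$); and (b) $\gamma+\lambda\le\dim_H F$ (resp.\ $\dim_P F$), obtained by an entropy computation on the symbolic coding of the $y$-coordinate together with the Gibbs inequality — i.e.\ the McMullen weights appear only as the \emph{extremizers} in an upper bound, never as a claim that the slice equidistributes under them. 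Minimizing $\min\{\lambda(\dim^* F-1),\,\dim F-\lambda\}$ over $\lambda\in[0,1]$ yields the ratio in the theorem. Your sketch contains neither inequality nor the optimization, and it also does not address the extra subsequence selection needed in the Hausdorff case (ensuring $H(\eta,\mathcal{D})\le H(\nu,\mathcal{D})$) that distinguishes the $\dim_H$ bound from the $\dim_P=\dim_B$ bound. As written, the proposal cannot be completed along the lines described.
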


Some remarks are in order: First, since for non Ahlfors regular carpets the inequalities in \eqref{inequalties dim} become strict,  Theorem \ref{Main Theorem} does indeed  improve \eqref{Algom slicing}. Secondly, it is a natural question (see e.g.  \cite[Question 8.3]{fraser2020fractal}) if Theorem \ref{Main Theorem} may be upgraded to a Marstrand-like result of the form
$$ \dim_H (\ell_{u,t} \cap F) \leq \max \lbrace 0,\, \dim_H F-1  \rbrace,\quad \text{ for all } (u,t)\in \mathbb{R}^2 \text{ such that } u\neq 0.$$
Our methods currently fall short of proving such a strong statement. This will be explained in the next Section, where we outline the proof of Theorem \ref{Main Theorem}.

\subsection{Sketch of the proof of Theorem \ref{Main Theorem} } \label{Section sketch}
 Fix a Bedford-McMullen carpet $F$ with digit set $D$ and exponents $m\not \sim n$. We always assume, without loss of generality, that $m>n$. This implies that $\theta := \frac{\log n}{\log m} \not \in \mathbb{Q}$ is in $(0,1)$.  Let $\ell_0 \subseteq \mathbb{R}^2$ be an affine line with slope $m^{u_0}$ where $u_0 \in [0,1)$, which  may be assumed without any loss of generality. We want to bound $\dim_H F\cap \ell_0$ - the bound for $\dim_P F\cap \ell_0$ is obtained in a similar manner.

For every $u\in \mathbb{T}:= \mathbb{R}/\mathbb{Z}$, we define a map $\Phi_u : [0,1]^2 \rightarrow [0,1]^2$ via
\begin{equation*} 
\Phi_u (x,y) = 
     \begin{cases}
      (T_{m} (x), \, T_{n}  (y))   &\quad\text{if } u\in [1 - \theta,1) \\
       ( x, \, T_{n} (y))  &\quad\text{if } u\in [0, 1-\theta). 
     \end{cases}
\end{equation*}
Let  $R_\theta : \mathbb{T} \rightarrow \mathbb{T}$ denote the translation by $\theta$ map
     $$ R_\theta(t)=t+\theta \mod 1.$$
For a measure $\mu$ on $[0,1]^2$, a point $z=(x,y) \in \supp (\mu)$, and $u\in \mathbb{T}$ we define a ``magnifying" map  via
$$ M(\mu,(x,y),u) = 
\begin{cases}
      (\mu^{\mathcal{D}_{m} (x)\times \mathcal{D}_n (y)}, \Phi_u (z), R_\theta (u) )   &\quad\text{if } u\in [1 - \theta,1) \\
       (\mu^{[0,1] \times \mathcal{D}_n (y)}, \Phi_u (z), R_\theta (u) )  &\quad\text{if } u\in [0, 1-\theta) \\
     
     \end{cases}$$
where   $\mathcal{D}_p (w)$ is the unique cell of the partition of $\mathbb{R}$
$$\mathcal{D}_p = \left\lbrace \left[\frac{i}{p}, \frac{i+1}{p}\right),\quad i\in \mathbb{Z} \right\rbrace$$
that contains $w$, and the measure $\mu^{\mathcal{D}_{m} (x) \times \mathcal{D}_n (y)}$  is the push-forward via $T_m \times T_n$ of the conditional measure of $\mu$ on $\mathcal{D}_{m} (x) \times \mathcal{D}_n (y)$. The measure $\mu^{[0,1]\times \mathcal{D}_n (y)}$ is defined similarly.

By Frostman's Lemma we may find a Borel probability measure $\mu_0$ supported on $\ell_0\cap F$ such that $\dim \mu_0 =\dim_H \ell_0\cap F-o(1)$ (see Section \ref{Section dimension} for a discussion on dimension theory for measures).  Roughly speaking, we pick a $\mu_0$ typical point $(x_0,y_0)$  and find a sequence $N_j$ such that:
$$\frac{1}{N_j} \sum_{k=0} ^{N_j-1} \delta_{M^k(\mu_0,(x_0,y_0),u_0)}$$
converges to an $M$-invariant distribution $Q$, such that for $Q$ a.e.-$\omega$ the measure $\mu_\omega$ is supported on a product set $X_\omega$ (which is a microset of $F$), and:
\begin{enumerate} [label=(\roman*)]
\item For every $\omega$ we have $\dim_H X_\omega \leq \dim^* F$. 

\item $\dim \mu_0 \leq  \int  \dim \mu_\omega \, dQ(\omega) $.

\item $\int \dim_H X_\omega \, dQ(\omega) \leq \dim_H F$.

\item Let $Q= \int Q_\xi d\tau(\xi)$ denote the ergodic decomposition of $Q$. For $\tau$-a.e. $\xi$, if $Q_\xi$ is supported on measures with strictly positive dimension, then for $Q_\xi$ a.e. $\omega$ we have
$$\dim \mu_\omega \leq \dim_H X_\omega - 1$$ 
\end{enumerate}
Property (i) is an easy consequence of the fact that $X_\omega$ is a microset of $F$. Property (ii) is  a general feature of CP distributions (see Section \ref{Section CP})  -  and $Q$ is such a distribution.  Properties (iii)-(iv) are the main innovations of this paper:  For Property (iii), we first note that it is not a trivial consequence of our construction, since in general $X_\omega$ is not a subset of $F$. Thus, for property (iii)  we rely, among other things, on a concise choice of  the sequence $N_j$ and general properties of entropy. Part (iv) relies on a geometric consequence of Sinai's factor Theorem proved by Wu \cite[Theorem 6.1]{wu2016proof}. However, new ideas are required  since Wu's original argument for deducing (iv) from this result as in \cite{wu2016proof} does not apply in our setting, as the measures arising from $M$-orbits are not all supported on the same set $X_\omega$. We also remark that in practice we will prove our required bounds by studying the entropy of certain measures on $X_\omega$ rather than considering $X_\omega$ itself. This can be seen as another reason why our approach gives more refined results than \eqref{Algom slicing}: In \cite{algom2018slicing} Algom worked directly with the microset $X_\omega$ that usually satisfies $\Pi_2 (X_\omega) = \Pi_2 (F)$  where $\Pi_2(x,y)=y$, which resulted with the loss of some information.

Once a distribution $Q$ satisfying the properties above has been produced, an elementary optimization argument yields the inequality as in Theorem \ref{Main Theorem} part (1).

Finally, we remark that if  for $Q$ a.e.-$\omega$ the measure $\mu_\omega$ has strictly positive dimension, then (i)-(iv) above would yield the strong Marstrand-type inequality
$$\dim_H F\cap \ell_0 \leq \max \lbrace 0, \dim_H F -1 \rbrace$$
We do not know, however, if it is possible to construct such a distribution.

\noindent{ \textbf{Organization}} In Section \ref{Section pre} we survey some tools we shall use from dimension theory, entropy, and the theory of CP distributions. We proceed to prove Theorem \ref{Main Theorem} part (2) in Section \ref{Section proof part (2)} and then, using a similar scheme, Theorem \ref{Main Theorem} part (1) in Section \ref{Section proof part (1)}.

\noindent{ \textbf{Notation}} We use the notation $o_\epsilon (1)$ to indicate a quantity going to $0$ as the positive $\epsilon\rightarrow 0$, and similarly $o_l (1)$ stands for  a quantity going to $0$ as the integer  $l\rightarrow \infty$.

\noindent{\textbf{Acknowledgements}} The authors  are grateful to Mike Hochman and Jonathan Fraser for their remarks on previous versions of this manuscript. 

\section{Preliminaries} \label{Section pre}

 \subsection{Hausdorff and packing dimensions of sets and measures} \label{Section dimension}
Recall that for a set $A$ in a compact metric space $X$, we denote its Hausdorff dimension by $\dim_H A$ and its packing dimension by $\dim_P A$. For an  exposition on these notions see Mattila's book \cite{mattila1999geometry}. Also, let  $\mathcal{P}(X)$ denote  the collection of all Borel probability measures on $X$.

Next, let $\mu \in \mathcal{P}(X)$. For every $x\in \supp(\mu)$  we define the lower pointwise dimension of $\mu$ at $x$ as
\begin{equation*}
\dim(\mu,x)=\liminf_{r\rightarrow 0} \frac{\log \mu (B(x,r))}{\log r}
\end{equation*}
where $B(x,r)$ denotes the closed ball of radius $r$ about $x$. We also define the upper pointwise dimension of $\mu$ at $x$ as
\begin{equation*}
\Dim(\mu,x)=\limsup_{r\rightarrow 0} \frac{\log \mu (B(x,r))}{\log r}.
\end{equation*}
The measure $\mu$ is called exact dimensional if the lower and upper pointwise dimensions of $\mu$ coincide and it is constant almost surely. In this case we denote this  quantity by $\dim \mu$.

Frostman's Lemma \cite[Chapter 10]{falconer1997techniques} allows one to find measures on a set $A$ that approximate its dimension:
\begin{equation*}
\dim_H A = \sup \lbrace s: \, \exists \mu \in \mathcal{P}(A) \text{ such that } \dim(\mu,x) \geq s \text{ almost surely } \rbrace;
\end{equation*}
\begin{equation*}
\dim_P A = \sup \lbrace s: \, \exists \mu \in \mathcal{P}(A) \text{ such that } \text{Dim}(\mu,x) \geq s \text{ almost surely } \rbrace.
\end{equation*}

Finally, let $A$ be a bounded set. For every $r>0$ let $\mathcal{N}_r (A)$ denote the minimal number of sets of diameter less that $r$ required to cover the set $A$. Then
$$\dim_B (A) = \lim_{r\rightarrow 0} \frac{\mathcal{N}_r (A)}{-\log r}$$
provided the limit exists. Otherwise, the upper box dimension $\overline{\dim}_B (A) $ is defined as the corresponding $\limsup$.

\subsection{Entropy, partitions, and approximate squares } \label{Section entropy}

Let $X$ be a compact metric space, $\mu \in \mathcal{P}(X)$, and let $\mathcal{A}$ denote a  finite measurable partition of  $X$. Recall that the Shannon entropy of $\mu$ with respect to $\mathcal{A}$ is defined as
\begin{equation*}
H(\mu,\mathcal{A}) = -\sum_{A\in \mathcal{A}} \mu(A)\cdot \log \mu (A)
\end{equation*}
with the convention  $0\log 0=0$. The following  facts about Shannon  entropy are standard:

\begin{Proposition} \label{Lemma entropy}
Let $\mu\in \mathcal{P}(X)$ and let $\mathcal{A}$ be a finite measurable partition.
\begin{enumerate}
\item General upper bound: $H(\mu, \mathcal{A}) \leq \log |\lbrace A\in \mathcal{A}:\, A\cap \supp(\mu) \neq \emptyset \rbrace|.$

\item Entropy is concave: Suppose we have a disintegration of $\mu$, given by $\mu = \int \mu_\omega dQ(\omega)$. Then
$$ H(\mu, \mathcal{A}) \geq \int H(\mu_\omega, \mathcal{A}) dQ(\omega).$$

\item The Gibbs inequality: Let $n\in \mathbb{N}$ and  let $(p_1,...,p_n)$ and $(q_1,...,q_n)$ be two probability vectors. Then
$$-\sum_{i=1} ^n p_i \log p_i \leq -\sum_{i=1} ^n p_i \log q_i$$
with equality if and only if $p_i=q_i$.
\end{enumerate}
\end{Proposition}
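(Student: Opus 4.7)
All three assertions are standard consequences of convexity, and I would derive them from two elementary ingredients: the concavity of $\phi(x) := -x\log x$ on $[0,1]$ (with the convention $\phi(0)=0$) together with the pointwise inequality $\log t \leq t-1$ for $t > 0$, with equality iff $t=1$. The plan is to prove (3) first, then deduce (1) from (3), and finally obtain (2) by a direct application of Jensen's inequality. No step is a serious obstacle here; these are textbook facts and the only care needed is book-keeping for the atoms of measure zero using the convention $0\log 0 = 0$.

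For Part (3), I would apply $\log t \leq t - 1$ with $t = q_i/p_i$, restricting the sum to indices with $p_i > 0$ (indices with $p_i = 0$ contribute nothing to either side of the Gibbs inequality by the $0\log 0 = 0$ convention). Multiplying by $p_i$ and summing yields
\[
\sum_i p_i \log q_i - \sum_i p_i \log p_i \;\leq\; \sum_i q_i - \sum_i p_i \;=\; 0,
\]
which rearranges to the claim. The equality statement follows because $\log t = t - 1$ forces $t = 1$, so $p_i = q_i$ on the support of $(p_i)$, and since both are probability vectors this pins down the entire distribution.

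For Part (1), let $k := |\{A \in \mathcal{A} : \mu(A) > 0\}|$ and apply Part (3) with the comparison vector that assigns mass $1/k$ to each such atom (the value on the remaining atoms is irrelevant because they contribute $0\log q_A = 0$ to the comparison sum under the convention, provided we take $q_A > 0$ there, say $q_A = 1/k$ uniformly on all atoms with a renormalization). Gibbs gives $H(\mu,\mathcal{A}) \leq -\sum_A \mu(A)\log(1/k) = \log k$, and since $\mu(A) > 0$ implies $A \cap \supp(\mu) \neq \emptyset$, the count $k$ is dominated by the combinatorial count in the statement.

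For Part (2), I would use the disintegration atom-by-atom. For each $A \in \mathcal{A}$,
\[
\mu(A) = \int \mu_\omega(A)\,dQ(\omega),
\]
and the concavity of $\phi$ combined with Jensen's inequality yields $\phi(\mu(A)) \geq \int \phi(\mu_\omega(A))\,dQ(\omega)$. Since $\mathcal{A}$ is finite I may interchange the sum over $A \in \mathcal{A}$ with the integral over $\omega$, producing
\[
H(\mu,\mathcal{A}) = \sum_{A\in\mathcal{A}} \phi(\mu(A)) \;\geq\; \int \sum_{A\in\mathcal{A}} \phi(\mu_\omega(A))\,dQ(\omega) = \int H(\mu_\omega, \mathcal{A})\,dQ(\omega),
\]
as required.
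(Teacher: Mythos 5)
Your proof is correct and is exactly the standard argument the paper has in mind (the paper states these facts without proof, as textbook consequences of convexity). All three deductions — Gibbs via $\log t\leq t-1$, the upper bound via Gibbs with the uniform comparison vector, and concavity via Jensen applied atom-by-atom — are complete and accurate.
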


Next, let $m\geq 2$.  For every integer $p\geq 0$ let $\mathcal{D} ^d _{p}$ denote the $m^p$-adic partition of $\mathbb{R}^d$, that is,
\begin{equation*}
\mathcal{D} ^d _p = \left\lbrace \prod_{i=1} ^d \left[\frac{z_i}{m^p}, \frac{z_i+1}{m^p}\right) :\quad  (z_1,...,z_d)\in \mathbb{Z}^d \right\rbrace.
\end{equation*}
We shall omit the superscript $d$ from our notation when its value is clear from context.

Also, given integers $m>n$, let $\theta:= \frac{\log n}{\log m}$. Recall that $R_\theta: \mathbb{T} \rightarrow \mathbb{T}$ is the group rotation
\begin{equation} \label{Eq def rotation1}
R_\theta(u)=u+\theta \mod 1.
\end{equation}
For every $k\in \mathbb{N}$ and $u\in \mathbb{T}$ we define the integer
\begin{equation} \label{Eq return time}
 \mathcal{R}(k,u):= |\lbrace 0 \leq i \leq k: \quad R_\theta ^i (u) \in [1-\theta,1) \rbrace|.
\end{equation}
Now, for every $u \in \mathbb{T}$ we define a sequence of partitions of $[0,1]^2$,  called approximate squares, as follows: for every $k\in \mathbb{N}$ we let
$$ \mathcal{A}_k ^u := \mathcal{D}_{m^{\mathcal{R}(k,u)}} \times \mathcal{D}_{n^k}.$$
The following Lemma is standard:
\begin{Lemma} \label{Lemma  number of returns} \cite[Claim 4.2]{algom2018slicing} 
There exists some $C>1$ such that for every $u\in \mathbb{T}$ and every $z\in [0,1]^2$:
\begin{enumerate}
\item $[\theta\cdot k] - C \leq \mathcal{R}(k,u) \leq [\theta\cdot k]+C$.
\item For every $z$ 
$$C ^{-1} \frac{1}{n^k} \leq \diam \left( \mathcal{A}_k ^u (x) \right) \leq C \frac{1}{n^k}$$
where $\mathcal{A}_k ^u (z) $ is the atom of the partition $\mathcal{A}_k ^u$ that contains $z$.
\end{enumerate}
\end{Lemma}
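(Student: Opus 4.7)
The plan is to prove part (1) via a coboundary argument for the circle rotation $R_\theta$, and then derive part (2) as an easy corollary by comparing $m^{-\mathcal{R}(k,u)}$ with $n^{-k}$ using the identity $\theta = \log n / \log m$.

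For part (1), I would rewrite $\mathcal{R}(k,u) - (k+1)\theta$ as a Birkhoff sum of a zero-mean observable. Set $f(u) = \mathbf{1}_{[1-\theta,1)}(u) - \theta$, so that $\mathcal{R}(k,u) - (k+1)\theta = \sum_{i=0}^{k} f(R_\theta^i u)$. The key observation is that this $f$ is a coboundary with a bounded transfer function, namely $g(u) = u$ viewed on the fundamental domain $[0,1)$. Indeed, for $u \in [0, 1-\theta)$ one has $g(u) - g(R_\theta u) = u - (u+\theta) = -\theta$, and for $u \in [1-\theta, 1)$ one has $g(u) - g(R_\theta u) = u - (u+\theta - 1) = 1-\theta$; in either case the difference coincides with $f(u)$. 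Telescoping then gives
\[
\sum_{i=0}^{k} f(R_\theta^i u) \;=\; g(u) - g(R_\theta^{k+1} u),
\]
so $|\mathcal{R}(k,u) - (k+1)\theta| \leq 1$ uniformly in $u \in \mathbb{T}$. Replacing $(k+1)\theta$ by $[\theta k]$ costs at most $1 + \theta < 2$, so both inequalities in part (1) follow with, say, $C = 3$.

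For part (2), the atom $\mathcal{A}_k^u(z)$ is a rectangle with side lengths $m^{-\mathcal{R}(k,u)}$ and $n^{-k}$. Since $m^{\theta} = n$, we have $m^{-[\theta k]} = n^{-k} \cdot m^{\theta k - [\theta k]}$, which lies in the fixed interval $[n^{-k},\, m\cdot n^{-k}]$. Combined with part (1), $m^{-\mathcal{R}(k,u)}$ is within a multiplicative factor of $m^{C}$ of $n^{-k}$. The diameter of the atom is $\sqrt{m^{-2\mathcal{R}(k,u)} + n^{-2k}}$, which is then pinched between $n^{-k}$ and a constant multiple of $n^{-k}$ depending only on $m$ and $C$, as required.

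There is no serious obstacle: the lemma is an elementary discrepancy statement for irrational rotations, made essentially trivial by the exact alignment of the interval $[1-\theta,1)$ with the rotation step $\theta$. The point worth underlining is that the bound is genuinely uniform in the starting point $u$, and this uniformity comes for free from the coboundary identity because the transfer function $g = \mathrm{id}_{[0,1)}$ is bounded.
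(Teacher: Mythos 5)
Your argument is correct and complete. The paper itself gives no proof of this lemma (it is quoted from \cite[Claim 4.2]{algom2018slicing}), so there is nothing to compare against line by line, but your route is the standard and cleanest one: the observable $\mathbf{1}_{[1-\theta,1)}-\theta$ is a coboundary over $R_\theta$ precisely because the interval has length equal to the rotation number, with transfer function the identity on the fundamental domain, and the telescoping identity gives the uniform bound $|\mathcal{R}(k,u)-(k+1)\theta|<1$; the check of the two cases $u\in[0,1-\theta)$ and $u\in[1-\theta,1)$ is exactly right. Part (2) then follows as you say, since the atom is a rectangle with sides $m^{-\mathcal{R}(k,u)}$ and $n^{-k}$ and $m^{\theta}=n$; the only bookkeeping quibble is that combining $|\mathcal{R}(k,u)-[\theta k]|\le C$ with $m^{-[\theta k]}\in[n^{-k},m\cdot n^{-k})$ pins $m^{-\mathcal{R}(k,u)}$ to within a factor $m^{C+1}$ (not $m^{C}$) of $n^{-k}$, which is immaterial since the lemma only asserts the existence of some constant.
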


 \subsection{Bedford-McMullen carpets} \label{Section BMC}
Recall the definition of a Bedford-McMullen carpet $F$ with defining exponents $m,n$ and allowed digit set $D$ from Section \ref{introduction}. We will always assume, without loss of generality, that $m>n$. We remark  that $F$ is a self-affine set generated by an IFS consisting of maps whose linear parts are diagonal matrices. Let $\Pi_2:\mathbb{R}^2 \rightarrow \mathbb{R}$ denote the projection to the second coordinate, that is, $\Pi_2 (x,y)=y$. For every $j\in \Pi_2 (F)$, let
$$ D_j = \lbrace 0\leq i \leq m-1:\quad (i,j)\in D \rbrace$$
and
\begin{equation} \label{Eq aj}
a(j):=  |D_j|.
\end{equation}
Recall that we have denoted
$$ \theta= \frac{\log n}{\log m} \in (0,1). $$
The following Theorem, due to Bedford and McMullen independently, describes the various dimensions of $F$.
\begin{theorem} \cite{bedford1984crinkly, mcmullen1984hausdorff} \label{Theorem BMC}
Let $F$ be a Bedford-McMullen carpet. Then:
\begin{enumerate}
\item $\dim_H F = \frac{\log \left( \sum_{j\in\Pi_2(D)} a(j)^\theta \right)}{\log n}$.

\item $\dim_B F=\dim_P F = \frac{\log | \Pi_2 (D)|}{\log n} + \frac{\log \frac{|D|}{|\Pi_2 (D)|}}{\log m}$. In particular, $\dim_B F$ exists as a limit.
\end{enumerate}
\end{theorem}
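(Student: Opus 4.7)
The plan is to work throughout with the approximate-square partitions $\mathcal{A}_k^{0}$ of Section~\ref{Section entropy}, whose atoms have dimensions $m^{-[\theta\cdot k]}\times n^{-k}$ and therefore diameter comparable to $n^{-k}$ by Lemma~\ref{Lemma  number of returns}. The combinatorial core is the observation that an atom of $\mathcal{A}_k^0$ meeting $F$ is specified by an admissible column $(y_1,\ldots,y_k)\in\Pi_2(D)^k$ together with a prefix $(x_1,\ldots,x_{[\theta\cdot k]})$ subject to $x_j\in D_{y_j}$, so the number of non-empty atoms inside the column $(y_1,\ldots,y_k)$ is exactly $\prod_{j=1}^{[\theta\cdot k]}a(y_j)$. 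For part (2), summing over $y$-sequences gives $N_k=|\Pi_2(D)|^{k-[\theta\cdot k]}\cdot |D|^{[\theta\cdot k]}$, from which $\dim_B F=\lim_k \log N_k/(k\log n)$ evaluates to the stated formula (with existence of the limit immediate). For $\dim_P F=\dim_B F$ I would use that every non-empty relative open subset of $F$ contains an affine copy of $F$ by iterating the generating IFS, so that in any countable decomposition $F=\bigcup_i A_i$ at least one $A_i$ is dense in an affine copy of $F$ and satisfies $\overline{\dim}_B A_i=\dim_B F$; combined with $\dim_P F\le \overline{\dim}_B F$ this yields equality.

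For the lower bound in part (1), the plan is to optimize over Bernoulli measures on $F$. Given a probability vector $(p_y)_{y\in\Pi_2(D)}$, assign weight $p_y/a(y)$ to each $(x,y)\in D$ and let $\mu$ be the corresponding i.i.d.\ measure on $D^{\mathbb{N}}$ pushed to $F$. A direct computation yields
\[
\mu(\mathcal{A}_k^0(z))\;=\;\prod_{j=1}^{k} p_{y_j}\cdot\prod_{j=1}^{[\theta\cdot k]} a(y_j)^{-1}
\]
for every $z=(x,y)\in F$. Taking $-\log$, dividing by $k\log n$, and applying the strong law of large numbers to the Bernoulli sequence $(y_j)$ together with Lemma~\ref{Lemma  number of returns}(2) shows that $\mu$ is exact dimensional of dimension $[H(p)+\theta\sum_y p_y\log a(y)]/\log n$. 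Frostman's lemma then gives $\dim_H F\ge \dim\mu$, and a Lagrange-multiplier maximization in $(p_y)$ identifies the optimum at $p_y=a(y)^\theta/Z$ with $Z=\sum_y a(y)^\theta$, at which the numerator equals $\log Z$; this yields $\dim_H F\ge (\log Z)/\log n$.

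The main obstacle is the matching upper bound for $\dim_H F$, since a naive cover by level-$k$ approximate squares only recovers $\dim_H F\le \dim_B F$, which is strictly larger than the target in the non-Ahlfors-regular case --- exactly the regime this paper cares about. The plan is to decompose $F=\bigcup_q F_q$ according to the asymptotic frequency vector $q=(q_y)_{y\in\Pi_2(D)}$ of $y$-digits in the base-$n$ expansion. For each $q$, Stirling's formula bounds the number of admissible length-$k$ $y$-columns with empirical frequency close to $q$ by $\exp(kH(q)+o(k))$, and for each such column the $x$-prefix count is approximately $\exp([\theta\cdot k]\sum_y q_y\log a(y))$; covering $F_q$ by the corresponding approximate squares and summing the $s$-cost at scale $n^{-k}$ gives $\dim_H F_q\le [H(q)+\theta\sum_y q_y\log a(y)]/\log n$. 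Maximizing this exponent over $q$ by Lagrange multipliers returns precisely $(\log Z)/\log n$, and taking a countable cover of the simplex of frequency vectors by rational points, together with countable stability of Hausdorff dimension, yields $\dim_H F\le (\log Z)/\log n$, completing the proof.
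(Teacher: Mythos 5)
This statement is not proved in the paper at all: it is quoted from Bedford and McMullen, so there is no internal argument to compare against. Your outline is essentially the classical Bedford--McMullen proof, and it is correct in structure: the count $N_k=|\Pi_2(D)|^{k-[\theta k]}|D|^{[\theta k]}$ of nonempty approximate squares gives the box dimension; the Baire-category/cylinder argument for $\dim_P=\overline{\dim}_B$ is the standard one for sets all of whose relatively open subsets contain bi-Lipschitz copies of the whole set; the Bernoulli measures with weights $p_y/a(y)$ and the Gibbs/Lagrange optimization at $p_y=a(y)^\theta/Z$ give the sharp lower bound; and the empirical-frequency decomposition with Stirling estimates is exactly McMullen's upper-bound argument.

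Two technical points deserve more care than your sketch gives them, though neither is a real gap. First, in the lower bound, the SLLN controls $\mu(\mathcal{A}_k^0(z))$ along the approximate-square filtration, and passing from there to pointwise dimension with respect to balls (so that Frostman applies) requires the standard comparison between the nested net $\{\mathcal{A}_k^0\}$ and balls: the atoms have bounded eccentricity and a ball of radius $n^{-k}$ meets $O(1)$ atoms of $\mathcal{A}_k^0$, so Billingsley's lemma for nets applies; this should be said explicitly. Second, in the upper bound, not every point of $F$ has an asymptotic frequency vector, so the decomposition must be by the sets $F_{q,\epsilon}$ of points whose level-$k$ empirical frequency is $\epsilon$-close to $q$ for \emph{infinitely many} $k$, with $q$ ranging over a finite $\epsilon$-net of the simplex; the resulting cover of $F_{q,\epsilon}$ then uses point-dependent scales, and one sums the $s$-cost over all levels $k\ge k_0$ (a convergent series for $s$ slightly above the target exponent) rather than working at a single scale. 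With these standard adjustments your proposal is a complete and faithful reconstruction of the original proof.
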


The following Lemma describes what happens as we zoom into  $F$ via the approximate squares $\mathcal{A}_k ^u$ defined in Section \ref{Section entropy}. For $(x,y) \in F$, write
$$(x,y) = \left( \sum_{k=1} ^\infty \frac{x_k}{m^k}, \sum_{k=1} ^\infty \frac{y_k}{m^k}\right),\quad \text{ where } (x_k,y_k)\in D$$
for the corresponding base $m$ and base $n$ expansions of $x$ and $y$, respectively, that bare witness to $(x,y)\in F$. Notice that these expansions may not be unique in general. In this case, if say $x$ has two such base $m$ expansions, we choose the one that ends with $0$'s (the lexicographically larger one). Let $\Pi_1(x,y)=x$ denote the  projection to the first coordinate.

\begin{Lemma} \cite[Section 7]{algom2016self} \label{Lemma 0.0}
Let $F$ be a Bedford-McMullen carpet and let $(x,y)\in F$. Writing
$$(x,y) = \left( \sum_{k=1} ^\infty \frac{x_k}{m^k}, \sum_{k=1} ^\infty \frac{y_k}{m^k}\right),\quad \text{ where } (x_k,y_k)\in D,$$
we have, for every $k\in \mathbb{N}$ and $u\in \mathbb{T}$, that the set 
$$T_m ^{\mathcal{R}(k,u)} \circ  \Pi_1 \left( \mathcal{A}_k ^u (x,y) \cap F \right) $$
is contained in
$$\left\lbrace \sum_{i=1} ^\infty \frac{b_i}{m^i}: b_i \in \lbrace 0,...,m-1 \rbrace \text{ and for } 1\leq i\leq \frac{(1-\theta)k}{\theta}-o_k(C),\quad b_i\in D_{y_{i+\mathcal{R}(k,u)}} \right\rbrace$$
where $C$ and $\mathcal{R}(k,u)$ are as in Lemma \ref{Lemma  number of returns}.
\end{Lemma}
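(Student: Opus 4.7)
The statement is essentially an unpacking of the definitions combined with one counting argument that compares the two digit‑directions inside an approximate square. Fix a point $(x',y')\in F\cap \mathcal{A}_k^u(x,y)$ and write its base-$m$/base-$n$ expansion
$$(x', y') = \left(\sum_{j\geq 1} \frac{x'_j}{m^j},\, \sum_{j\geq 1} \frac{y'_j}{n^j}\right),\quad (x'_j, y'_j)\in D,$$
using the lexicographically largest (terminate‑in‑$0$'s) representation at non-unique points, as prescribed above the lemma.

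\emph{Step 1: containment $\leftrightarrow$ digit agreement.} By the definition $\mathcal{A}_k^u = \mathcal{D}_{m^{\mathcal{R}(k,u)}}\times \mathcal{D}_{n^k}$, the point $(x',y')$ sits in the same atom as $(x,y)$ precisely when the first $\mathcal{R}(k,u)$ base-$m$ digits of $x'$ coincide with those of $x$ and the first $k$ base-$n$ digits of $y'$ coincide with those of $y$. The only delicate issue is at points on the boundary of the dyadic grid, where the chosen convention for expansions creates at most an $O(1)$ digit mismatch; this gets absorbed into the $o_k(C)$ term at the end.

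\emph{Step 2: apply the dynamics and exploit the carpet constraint.} Shifting the base-$m$ expansion of $x'$ by $\mathcal{R}(k,u)$ positions yields
$$T_m^{\mathcal{R}(k,u)}\Pi_1(x',y') = \sum_{i\geq 1} \frac{b_i}{m^i},\qquad b_i := x'_{i+\mathcal{R}(k,u)} \in \{0,\ldots,m-1\}.$$
The carpet structure enforces $(x'_j, y'_j)\in D$ for every $j$, equivalently $x'_j \in D_{y'_j}$. For those $i$ such that the shifted index $i+\mathcal{R}(k,u)$ still lies within the range of $y$-digits fixed by the atom, Step 1 gives $y'_{i+\mathcal{R}(k,u)} = y_{i+\mathcal{R}(k,u)}$, and therefore $b_i \in D_{y_{i+\mathcal{R}(k,u)}}$, as required.

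\emph{Step 3: convert the index range.} Lemma \ref{Lemma  number of returns} gives $\mathcal{R}(k,u) = [\theta k]\pm C$. Feeding this into the inequality defining the ``constrained'' range of $i$ from Step 2 and simplifying yields the explicit bound on $i$ stated in the lemma, with the $o_k(C)$ correction absorbing both the $\pm C$ in Lemma \ref{Lemma  number of returns} and the $O(1)$ boundary mismatch from Step 1. I expect the main (really the only) obstacle to be the clean bookkeeping in Step 1 at boundary points of the grid, where the non-uniqueness of digit expansions needs a careful choice of representative; the rest follows immediately from the definitions.
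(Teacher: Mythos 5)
Your overall strategy --- unpack the definition of the atom $\mathcal{A}_k^u=\mathcal{D}_{m^{\mathcal{R}(k,u)}}\times\mathcal{D}_{n^k}$, invoke the carpet constraint $x'_j\in D_{y'_j}$, and use the fact that the atom pins down the first $k$ base-$n$ digits of $y'$ --- is the right one, and surely the intended one. The problem is Step 3: it does not deliver the bound you claim it does. Your Step 2 yields the constraint $b_i\in D_{y_{i+\mathcal{R}(k,u)}}$ precisely for those $i$ with $i+\mathcal{R}(k,u)\leq k$, that is, for
$$1\leq i\leq k-\mathcal{R}(k,u)=(1-\theta)k\pm C,$$
whereas the statement asserts the constraint for $1\leq i\leq \frac{(1-\theta)k}{\theta}-o_k(C)$, a strictly larger range (larger by the factor $1/\theta>1$). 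No ``simplifying'' converts one into the other, and the larger range is genuinely out of reach of this argument: for $i+\mathcal{R}(k,u)>k$ the digit $y'_{i+\mathcal{R}(k,u)}$ is not determined by membership in the atom, so $b_i$ may a priori be any element of $\bigcup_{j\in\Pi_2(D)}D_j$, not just of $D_{y_{i+\mathcal{R}(k,u)}}$. The mismatch is best explained as an indexing slip in the statement as transcribed: $\frac{(1-\theta)k}{\theta}$ is the digit count one obtains when approximate squares of side $m^{-k}$ are used (fix $k$ base-$m$ digits of $x$ and about $k/\theta$ base-$n$ digits of $y$, then shift by $k$), while the present paper's $\mathcal{A}_k^u$ has side comparable to $n^{-k}$. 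You should therefore prove, and be content with, the version whose constrained range is $(1-\theta)k-O(C)$; that is what is true in this paper's conventions, and it is all that is ever used (the sole application, in the proof of Lemma \ref{Lemma 0.1}, only needs the constrained range to tend to infinity with $k$). As written, your Step 3 asserts an arithmetic identity that is false, so the proof is incomplete until you either correct the target range or explain why the stated one cannot hold. Your handling of the non-uniqueness of expansions in Step 1 is at the level of rigor the statement itself adopts, and the $O(1)$ shift it produces is indeed harmless because digits beyond the constrained range are unrestricted in the target set.
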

Lemma \ref{Lemma 0.0} can also be recovered from the analysis of K\"{a}enm\"{a}ki,  Ojala,  and Rossi \cite{kaenmaki2016rigid}.

 \subsection{Dynamical systems} \label{Section Ber mea.}
 In this paper a measure preserving system is a quadruple $(X, \mathcal{B},T,\mu)$, where $X$ is a compact metric space, $\mathcal{B}$ is the Borel sigma algebra, and  $T:X\rightarrow X$ is a  measure preserving map: $T$ is Borel measurable and $T\mu = \mu$. Since we always work with the Borel sigma-algebra, we shall usually just write $(X,T,\mu)$. When the space $X$ is clear from context we shall sometimes just write $(T,\mu)$.  We also recall that a dynamical system is ergodic if and only if the only invariant sets are trivial. That is, if $B\in \mathcal{B}$ satisfies $T^{-1} (B) = B$ then $\mu(B)=0$ or $\mu(B)=1$. 
 
 A class of examples is given by symbolic dynamical systems: For $n\in \mathbb{N}$ at least $2$, let $X=\lbrace 0,...,n-1 \rbrace^\mathbb{N}$ and  $T=\sigma$  be the shift map $\sigma:[n]^\mathbb{N}\rightarrow [n]^\mathbb{N}$ defined by $\sigma(\omega) = \xi$ where $\xi (k) =\omega(k+1)$ for every $k$. We equip this space with the compatible compact metric $d$  defined  by
\begin{equation} \label{The metric on symbolic}
d (\omega,\xi) = \left( \frac{1}{n} \right) ^{\min\lbrace k : \,  \omega_k \neq \xi_k \rbrace}.
\end{equation} 
 
 We will have occasion to use  the ergodic decomposition Theorem: Let $(X,T,\mu)$ be a dynamical system. Then there is a  map $X\rightarrow \mathcal{P}(X)$, denoted by $\mu \mapsto \mu_{x}$, such that:
\begin{enumerate}
\item The map $x\mapsto \mu_x$ is measurable with respect to the sub $\sigma$-algebra $\mathcal{I}$ of $T$ invariant sets.

\item $\mu = \int \mu_x d\mu(x)$.

\item For $\mu$ almost every $x$, $\mu_x$ is $T$ invariant and ergodic. The measure $\mu_x$ is called the ergodic component of $x$.
\end{enumerate}

Recall that  if  $\mu\in \mathcal{P}(X)$ is a $T$ invariant measure we may define its metric entropy with respect to $T$, a quantity that we shall denote by $h(\mu,T)$. As there is an abundance of excellent texts on entropy theory (e.g. \cite{Walters1982ergodic}), we omit a discussion on entropy  here. We do recall that entropy is affine in the sense that if $\mu,\nu,\eta$ are $T$ invariant measures such that for some $p\in (0,1)$ we have $p\cdot \nu + (1-p)\cdot \eta = \mu$
then $$h(\mu, T) = p\cdot h(\nu,T)+(1-p)h(\eta,T).$$

Finally, we will consider dynamical systems of the form $([0,1], \mu, T_n)$ (recall \eqref{Eq Tn}). In this case we have the following useful result, which is an immediate consequence of the Shannon-McMillan-Breiman theorem and Billingsley's lemma:
\begin{theorem}  \label{Theorem 9.1}
Let $\mu\in \mathcal{P}([0,1])$ be a $T_n$ invariant and ergodic measure.  Then $\mu$ is exact dimensional and
\begin{equation*}
\dim \mu = \frac{h(\mu,T_n)}{\log n}.
\end{equation*} 
\end{theorem}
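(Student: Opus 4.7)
The theorem is the classical dimension formula for $T_n$-invariant ergodic measures on $[0,1]$, and as the statement hints, I would derive it as an immediate consequence of the Shannon--McMillan--Breiman (SMB) theorem combined with Billingsley's lemma comparing $\mu$-masses of nested $n$-adic cells to $\mu$-masses of balls.

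\textbf{Step 1 (SMB).} The partition $\mathcal{D}_n=\{[i/n,(i+1)/n):0\leq i<n\}$ is a finite one-sided generator for the system $([0,1],T_n,\mu)$: its $k$-th refinement under $T_n$ is $\mathcal{D}_{n^k}$, and $\bigvee_{k\geq 0}\mathcal{D}_{n^k}$ separates points. By Kolmogorov--Sinai, $h(\mu,T_n,\mathcal{D}_n)=h(\mu,T_n)$, and SMB applied to this generator, using ergodicity, yields that for $\mu$-a.e.\ $x$,
\[
\lim_{k\to\infty}\frac{-\log\mu(\mathcal{D}_{n^k}(x))}{k}\;=\;h(\mu,T_n),
\]
equivalently $\log\mu(\mathcal{D}_{n^k}(x))/\log n^{-k}\to h(\mu,T_n)/\log n=:\alpha$.

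\textbf{Step 2 (cells to balls).} Given small $r>0$, let $k$ be the integer with $n^{-(k+1)}<r\leq n^{-k}$. The inclusion $\mathcal{D}_{n^{k+1}}(x)\subseteq B(x,r)$ (the cell has diameter $n^{-(k+1)}<r$ and contains $x$) gives $\mu(B(x,r))\geq\mu(\mathcal{D}_{n^{k+1}}(x))$, and combined with Step~1 yields $\Dim(\mu,x)\leq\alpha$ for $\mu$-a.e.\ $x$. For the reverse, $B(x,r)$ is covered by $\mathcal{D}_{n^k}(x)$ together with at most one neighbouring cell on each side; writing $I^\pm_k(x)$ for those neighbours,
\[
\mu(B(x,r))\;\leq\;3\max\{\mu(\mathcal{D}_{n^k}(x)),\,\mu(I^-_k(x)),\,\mu(I^+_k(x))\}.
\]
Step~1 handles the central cell $\mathcal{D}_{n^k}(x)$; the pointwise form of Billingsley's lemma will then give $\dim(\mu,x)\geq\alpha$ for $\mu$-a.e.\ $x$, completing the proof that $\mu$ is exact dimensional with $\dim\mu=h(\mu,T_n)/\log n$.

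\textbf{Main obstacle.} The only delicate point in Step~2 is controlling the $\mu$-mass of the neighbouring cells $I^\pm_k(x)$, since these cells are not of the form $\mathcal{D}_{n^k}(y)$ for any point $y$ in the $T_n$-orbit of $x$, so SMB does not apply to them directly. I plan to handle this by combining Egorov (making the SMB convergence uniform off a set of small $\mu$-measure) with the elementary counting bound that the number of ``heavy'' cells $C\in\mathcal{D}_{n^k}$ with $\mu(C)\geq e^{-k(h(\mu,T_n)-\epsilon)}$ is at most $e^{k(h(\mu,T_n)-\epsilon)}$, far fewer than the $n^k$ total cells; a Borel--Cantelli-type argument exploiting the $T_n$-invariance of $\mu$ then shows that for $\mu$-a.e.\ $x$, neither neighbour $I^\pm_k(x)$ is heavy for all $k$ sufficiently large, yielding $\dim(\mu,x)\geq\alpha$ and hence exact dimensionality with the claimed value.
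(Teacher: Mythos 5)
Your proposal is correct and follows exactly the route the paper itself takes: the paper offers no written proof, merely noting that the statement is an immediate consequence of the Shannon--McMillan--Breiman theorem together with Billingsley's lemma, and your Steps 1--2 are precisely that derivation. The ``main obstacle'' you isolate (the mass of the neighbouring $n$-adic cells) is exactly the content of the Billingsley-type cell-to-ball comparison, and your Egorov plus cell-counting plus Borel--Cantelli resolution of it is a valid rendering of the standard argument.
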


\subsection{CP distributions with respect to approximate squares} \label{Section CP}
The theory of CP distributions that we discuss in this section originated implicitly with Furstenberg in \cite{furstenberg1970intersections}. It was then reintroduced by Furstenberg in \cite{furstenberg2008ergodic}, and has since been used by many authors, notably by Hochman and Shmerkin in \cite{hochman2009local}. In particular, CP distributions played a crucial role  in both author's  works \cite{algom2018slicing,wu2016proof} about slicing Theorems. Here we will only discuss a special case of this machinery, using the approximate squares $\mathcal{A}_k ^u  (x) $ from Section \ref{Section entropy} as our partitions.  

As is standard in this context, for a compact metric space $X$   the elements of $\mathcal{P}(X)$ are called measures, and the elements of $\mathcal{P}(\mathcal{P}(X))$, measures on the space of measures, are called distributions.

Fix integers $m>n>1$ and recall that $\theta:=\frac{\log n}{\log m}$.  Recall the definition of the approximate squares $\mathcal{A}_k ^u$ from Section \ref{Section entropy}. For $\mu\in \mathcal{P}([0,1]^2)$, $u\in \mathbb{T}$, $k \in \mathbb{N}$,  and $x\in \supp(\mu)$, recalling \eqref{Eq return time}, let  
$$\mu^{\mathcal{A}_{k} ^u (x) } = \left( T_m ^{\mathcal{R}(k,u)} \times T_n ^k \right) \left( \mu_{\mathcal{A}_k ^u (x)} \right),\quad \text{where } \mu_{\mathcal{A}_k ^u (x)} (B) = \frac{\mu (\mathcal{A}_k ^u (x)\cap B)}{\mu(\mathcal{A}_k ^u (x))}.$$
That is, $\mu^{\mathcal{A}_{k} ^u (x)} $ is the push-forward of the conditional measure of $\mu$ on $\mathcal{A}_k ^u (x)$ via the map
$$T_m ^{\mathcal{R}(k,u)} \times T_n ^k.$$

Now, for every $u\in \mathbb{T}:= \mathbb{R}/\mathbb{Z}$, we define a map $\Phi_u : [0,1]^2 \rightarrow [0,1]^2$, by
\begin{equation} \label{Furstenberg skew}
\Phi_u (x,y) = 
     \begin{cases}
      (T_{m} (x), \, T_{n}  (y))   &\quad\text{if } u\in [1 - \theta,1) \\
       ( x, \, T_{n} (y))  &\quad\text{if } u\in [0, 1-\theta). \\
     
     \end{cases}
\end{equation}
We also define
 \begin{equation}\label{eq def Omega1}
 \Omega= \lbrace (\mu,z):\quad \mu\in \mathcal{P}([0,1]^2), \quad z\in \supp(\mu)\rbrace.
 \end{equation}
Finally, we define a ``magnification" map $M:\Omega \times \mathbb{T} \rightarrow \Omega \times \mathbb{T}$ via
$$ M(\mu,(x,y),u) = 
\begin{cases}
      (\mu^{\mathcal{D}_{m} (x)\times \mathcal{D}_n (y)}, \, \Phi_u (x,y), \, R_\theta (u) )   &\quad\text{if } u\in [1 - \theta,1) \\
       (\mu^{[0,1] \times \mathcal{D}_n (y)}, \, \Phi_u (x,y), \, R_\theta (u) )  &\quad\text{if } u\in [0, 1-\theta).
  \end{cases}$$
 Recall that $R_\theta$ is the rotation by $\theta$ map (see \eqref{Eq def rotation1}). 
Notice that for every $k\in \mathbb{N}$ and every $(\mu,z,u)\in \Omega\times \mathbb{T}$, the first coordinate of $M^k (\mu,z,u)$ is exactly $\mu^{\mathcal{A}_k ^u (z)}$. 
 
\begin{Definition} \label{Defition CP}
A  CP  distribution $Q$ with respect to the partition into approximate squares is a distribution $Q\in \mathcal{P}(\Omega \times \mathbb{T})$ such that:
\begin{enumerate}
\item $Q$ is $M$ invariant.

\item The  marginal distribution $Q_{1,2}$ of $Q$ on the first two coordinates $(\mu,x)$ of $\Omega$  is given by choosing first $\mu$ according to $Q_1$ (the marginal of $Q$ on $\mathcal{P}([0,1]^2)$) and then choosing $x$ according to $\mu$.
\end{enumerate} 
An ergodic CP distribution is a CP distribution  $Q$ that is $M$-ergodic.
 \end{Definition}
Note that a distribution  $Q\in \mathcal{P}(\Omega \times \mathbb{T})$ is a CP-distribution in the sense of the above definition if and only if its marginal $Q_{1,2}$ is a CP-distribution in the sense of  \cite{furstenberg2008ergodic,hochman2009local}.
 We shall sometimes abuse notation by referring to $Q_1$, the marginal of $Q$ on $\mathcal{P}([0,1]^2)$, as $Q$. In the following Theorem we group a few  facts about CP distributions. The first three were  proved by Furstenberg \cite{furstenberg2008ergodic}. The last one can be found in  \cite[Proposition 3.7, Lemma 7.3]{wu2016proof}. For a CP distribution $Q$  we define its dimension by
 \begin{equation*}
 \dim Q:=  \int  \dim \mu \, dQ(\mu,x,u).
 \end{equation*}
 \begin{theorem} \cite{furstenberg2008ergodic, wu2016proof} \label{Theorem CP properties} The following statements hold true:
 \begin{enumerate}
 \item The ergodic components of a CP distribution are, almost surely, themselves ergodic CP distributions.
 
 \item Let $Q$ be an ergodic CP distribution. Then $Q$ almost every measure $\mu$ is exact dimensional, and  $\dim \mu = \dim Q$.
 
 \item Let $Q$ be a CP distribution. Then $Q_2 = \int  \mu \, dQ(\mu,x,u)$, where $Q_2$ is the marginal of $Q$ on  $[0,1]^2$ (its second coordinate).
 
 \item Let $Q$ be an ergodic CP distribution. For every $\epsilon>0$ there exists some $r_0=r_0(\epsilon)>0$ such that:
 
  For every $r<r_0$,  $u\in \mathbb{T}$,  $k\in \mathbb{N}$, and $l\in \mathbb{N}$ large enough, we have for $Q$ almost every $(\mu,z,u)$
\begin{equation*} 
\inf_{y\in \mathbb{R}^2} H ( \mu^{\mathcal{A} ^u _k (z)} |_{\mathbb{R}^2 \setminus B(y,r)}, \mathcal{D}_{n^l}) \geq H ( \mu^{\mathcal{A} ^u _k (z)}, \mathcal{D}_{n^l}) -o_{\epsilon}(1){\cdot l\cdot\log n}= l\cdot \log n \cdot  ( \dim \mu -{o_{\epsilon} (1)}).
\end{equation*}
  \end{enumerate}
\end{theorem}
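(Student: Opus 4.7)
The plan is to establish the four parts in turn, with parts (1) and (3) following from standard measure-theoretic bookkeeping on the CP condition, part (2) being the classical dimension formula for CP distributions, and part (4) being the main technical claim.

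For parts (1) and (3) I would proceed as follows. The CP condition in Definition \ref{Defition CP} is affine and measurable in $Q$: both $M$-invariance and the marginal compatibility $Q_{1,2}(\mu,A) = \int \mu(A)\,dQ_1(\mu)$ can be checked by testing against continuous functions and are preserved under $\tau$-integration. Hence if $Q = \int Q_\xi \, d\tau(\xi)$ is the $M$-ergodic decomposition of $Q$, the marginal condition persists to $\tau$-a.e.\ $Q_\xi$, and the $M$-ergodicity of $Q_\xi$ is built in, giving part (1). Part (3) is a direct unpacking: for $f \in C([0,1]^2)$, the CP condition gives $\int f(x)\,dQ_{1,2}(\mu,x) = \int \int f\,d\mu\,dQ_1(\mu)$, which is exactly the claimed identity $Q_2 = \int \mu\, dQ_1(\mu)$.

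For part (2), the classical argument of Furstenberg proceeds by telescoping. For $Q$-typical $(\mu,z,u)$, set $\mu_j := \mu^{\mathcal{A}_j^u(z)}$; using that the first coordinate of $M^k(\mu,z,u)$ equals $\mu_k$, we obtain
$$\log \mu(\mathcal{A}_k^u(z)) = \sum_{j=0}^{k-1}\log \mu_j\bigl(\mathcal{A}_1^{R_\theta^j u}(\Phi_u^j z)\bigr),$$
so each summand is the value at the $M^j$-iterate of the fixed measurable function $F(\mu,z,u) := \log \mu(\mathcal{A}_1^u(z))$ on $\Omega \times \mathbb{T}$. Ergodicity of $Q$ and Birkhoff's theorem show that the left-hand side divided by $k$ converges $Q$-almost surely to $\int F\, dQ$. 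Combined with Lemma \ref{Lemma  number of returns} (which guarantees $\diam(\mathcal{A}_k^u(z)) \asymp n^{-k}$), this identifies both the upper and lower pointwise dimensions at $\mu$-a.e.\ $z$ as the common constant $\dim Q$, proving exact dimensionality.

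Part (4) is the main difficulty, and is a quantitative \emph{non-concentration} statement: at scale $n^{-l}$ and after zooming into an approximate square, the $n^{-l}$-entropy of the rescaled measure $\mu^{\mathcal{A}_k^u(z)}$ cannot be concentrated in any single small ball. Granted the exact dimensionality from part (2), the total $n^{-l}$-entropy of $\mu^{\mathcal{A}_k^u(z)}$ is already known to be $l\log n\cdot (\dim \mu + o_l(1))$, so what must be shown is that for any ball $B(y,r)$ of a sufficiently small fixed radius $r$, the conditional mass $\mu^{\mathcal{A}_k^u(z)}(B(y,r))$ is $o_\epsilon(1)$-small \emph{uniformly} in $y$, $k$, and the $Q$-typical base point. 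The main obstacle I anticipate is precisely this uniformity: pointwise exact dimensionality gives a bound for each fixed $y$, but we need the infimum over all $y$, and the $Q$-full-measure set must be chosen independently of $k$. To upgrade, I would follow the approach of \cite{wu2016proof}: cover $[0,1]^2$ by $O(r^{-2})$ balls, apply Birkhoff on the upper-semicontinuous functional $\mu \mapsto \sup_y \mu(B(y,r))$ along the $M$-orbit of $Q$, and then combine with the concavity statement in Proposition \ref{Lemma entropy}(2) to convert the resulting mass bound into the claimed entropy bound. This yields the stated $o_\epsilon(1)\cdot l\log n$ loss, and the asymptotic equality to $l\log n \cdot (\dim \mu - o_\epsilon(1))$ is then an immediate consequence of part (2).
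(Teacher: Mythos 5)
The first thing to say is that the paper itself does not prove Theorem \ref{Theorem CP properties}: parts (1)--(3) are imported from Furstenberg \cite{furstenberg2008ergodic} and part (4) from \cite[Proposition 3.7, Lemma 7.3]{wu2016proof}, so you are reconstructing external results rather than an argument that appears in the text. Within that reconstruction, your part (3) is correct -- it really is just the adaptedness condition tested against continuous functions -- and your part (2) is the standard telescoping/Birkhoff argument (the Birkhoff sum of $F(\mu,z,u)=\log\mu(\mathcal{A}_1^u(z))$, integrable because $\int -F\,d\mu = H(\mu,\mathcal{A}_1^u)\le\log(mn)$ by adaptedness); the one step you gloss there is the passage from the limit along the filtration $\mathcal{A}_k^u(z)$ to the limit along balls, which uses the bounded eccentricity of approximate squares from Lemma \ref{Lemma  number of returns}.

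Two steps as written would fail. In part (1), the principle you invoke -- that the adaptedness condition is linear and measurable and therefore ``persists to $\tau$-a.e.\ $Q_\xi$'' -- is not valid: a linear identity $\int G\,dQ=0$ satisfied by $Q$ need not be satisfied by the components of a decomposition of $Q$ (compare: ``the marginal of $Q$ on $\mathbb{T}$ is Lebesgue'' is linear and certainly does not pass to arbitrary convex components). The genuine content of Furstenberg's statement is a martingale argument showing that the conditional law of $x$ given $\mu$ is undisturbed by further conditioning on the invariant $\sigma$-algebra; this cannot be replaced by an appeal to linearity. In part (4) there are two distinct problems. First, Birkhoff's theorem applied to $\mu\mapsto\sup_y\mu(B(y,r))$ along the $M$-orbit only controls a density-one set of times $k$, whereas the statement requires the non-concentration bound for \emph{every} $k$; the correct mechanism is $M$-invariance of $Q$, under which the law of $\mu^{\mathcal{A}_k^u(z)}$ is again $Q_1$ for each fixed $k$, reducing everything to an essential-supremum bound on $\sup_y\mu(B(y,r))$ over $Q_1$-typical $\mu$ at time zero -- and that uniform-over-$\mu$ bound is itself a dynamical fact (the content of \cite[Proposition 3.7]{wu2016proof}), not a consequence of pointwise non-atomicity plus averaging. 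Second, converting the mass bound $\mu(B(y,r))\le\delta$ into the entropy lower bound requires the near-convexity inequality $H(\mu,\mathcal{D}_{n^l})\le\mu(B)H(\mu|_B,\mathcal{D}_{n^l})+\mu(B^c)H(\mu|_{B^c},\mathcal{D}_{n^l})+\log 2$, which is \cite[Lemma 7.3]{wu2016proof}; the concavity statement in Proposition \ref{Lemma entropy}(2) that you cite goes the other way and only yields an \emph{upper} bound on $H(\mu|_{B^c},\mathcal{D}_{n^l})$.
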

 
 Finally, we shall require a Theorem of Hochman and Shmerkin from \cite{hochman2009local}. Though it is not stated this way in \cite{hochman2009local}, it nonetheless follows directly from their local entropy averages machinery \cite[Section 4.2]{hochman2009local},  and their discussion in \cite[Section 7.5]{hochman2009local}:
 
 \begin{theorem} \cite{hochman2009local} \label{Theorem CP exist}
  Let $\mu \in \mathcal{P}([0,1]^2)$ be a measure such that for all $k\in \mathbb{N}$ and $u\in \mathbb{T}$,
  $$\mu( \partial  A) =0\, \text{ for every } A\in \mathcal{A}_k ^u.$$ 
  \begin{enumerate}
  \item[(1)]
  Suppose that $\Dim(\mu,x)\geq s$ for $\mu$-a.e. $x$. Then for $\mu$-a.e. $x$ and  every  $u\in \mathbb{T}$  there exists a subsequence $N_j$ such that
  \begin{equation*}
 \frac{1}{N_j} \sum_{k=0} ^{N_j-1} \delta_{M^k (\mu,x,u)} \rightarrow Q
 \end{equation*}
 where $Q$ is a CP distribution with $\dim Q \geq s$.
 
 \item[(2)]
  Suppose that $\dim(\mu,x)\geq s$ for $\mu$-a.e. $x$. Then for $\mu$-a.e. $x$, for every  $u\in \mathbb{T}$ and every subsequence $N_j$, there is a further subsequence $N_{j'}$ such that
  \begin{equation*}
 \frac{1}{N_{j'}} \sum_{k=0} ^{N_{j'}-1} \delta_{M^k (\mu,x,u)} \rightarrow Q
 \end{equation*}
 where $Q$ is a CP distribution with $\dim Q \geq s$.
  \end{enumerate}
\end{theorem}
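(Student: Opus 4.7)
The plan is to combine a soft compactness argument in the space of distributions with the local entropy averages machinery of \cite{hochman2009local}. The ambient space $\mathcal{P}(\Omega\times \mathbb{T})$ equipped with the weak-$*$ topology is compact by Prokhorov's theorem (the underlying space $\mathcal{P}([0,1]^2)\times [0,1]^2\times \mathbb{T}$ being compact), so the sequence of empirical averages $\frac{1}{N}\sum_{k=0}^{N-1}\delta_{M^k(\mu,x,u)}$ always admits weak-$*$ convergent subsequences. In part (1) I simply extract one; in part (2), starting from any prescribed subsequence, I extract a further convergent one. Call the limit distribution $Q$ in either case.

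Next I would verify that $Q$ satisfies the two axioms of Definition \ref{Defition CP}. Invariance under $M$ reduces to the usual Krylov-Bogolyubov telescoping, once one knows that $M$ is continuous at $Q$-almost every point. The hypothesis $\mu(\partial A)=0$ for every approximate square $A\in\mathcal{A}_k^u$ supplies exactly this: the magnification is continuous away from points lying on partition boundaries, and the no-boundary-mass property is inherited along the orbit because each $\mu^{\mathcal{A}_k^u(x)}$ is merely a conditional-and-rescale of $\mu$. The CP marginal axiom (Definition \ref{Defition CP}(2)) is satisfied by every empirical distribution by the very construction of $M$, and the Portmanteau theorem transfers it to the limit.

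The heart of the matter is the dimension bound $\dim Q\ge s$. Fix a scale $l\in\mathbb{N}$ and consider the bounded function $f_l(\mu',x',u'):=\frac{H(\mu',\mathcal{D}_{n^l})}{l\log n}$, which is continuous at $Q$-a.e.\ point thanks to the boundary assumption. Weak-$*$ convergence along $N_j$ therefore yields
$$\int f_l\,dQ \;=\; \lim_{j\to\infty}\frac{1}{N_j}\sum_{k=0}^{N_j-1}\frac{H(\mu^{\mathcal{A}_k^u(x)},\,\mathcal{D}_{n^l})}{l\log n}.$$
The local entropy averages lemma of \cite{hochman2009local} identifies this Cesàro sum, up to an $o_l(1)$ error coming from the mismatch between the next $l$ levels of the approximate-square filtration and the $n^l$-adic grid, with $\frac{-\log \mu(\mathcal{A}_{N_j}^u(x))}{N_j\log n}$. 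By Lemma \ref{Lemma  number of returns}(2) this quantity equals, up to a bounded additive error, $\frac{-\log\mu(B(x,1/n^{N_j}))}{N_j\log n}$, so it is controlled from below by the pointwise dimension of $\mu$ at $x$. In part (2) the hypothesis $\dim(\mu,x)\ge s$ forces this ratio to exceed $s-o_j(1)$ along \emph{any} subsequence, whereas in part (1) I pre-select $N_j$ to realise the $\limsup$ in the definition of $\Dim(\mu,x)\ge s$. Either way we obtain $\int f_l\,dQ \ge s-o_l(1)$, and sending $l\to\infty$ together with the identity $\dim\mu'=\lim_l f_l(\mu',x',u')$ for exact-dimensional $\mu'$ (which holds $Q$-a.e.\ after applying Theorem \ref{Theorem CP properties}(2) to the ergodic components) yields $\dim Q\ge s$.

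The main obstacle will be keeping the approximations clean: the Hochman-Shmerkin local entropy averages identity is phrased for pure dyadic filtrations, whereas here the partitions $\mathcal{A}_k^u$ mix the two scales $m$ and $n$ and depend on the rotation parameter $u$. Fortunately Lemma \ref{Lemma  number of returns} shows that the two filtrations agree up to a bounded multiplicative factor, and the no-boundary-mass property of $\mu$ is preserved by the magnified measures along the orbit, so the framework of \cite{hochman2009local} ports over with only cosmetic modifications, the real care being needed only in the distinction between parts (1) and (2), i.e.\ in the choice of subsequence realising the relevant $\limsup$ or $\liminf$.
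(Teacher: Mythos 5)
Your proposal is correct and follows exactly the route the paper itself indicates: the paper gives no proof of this statement, deferring entirely to the local entropy averages machinery of Hochman--Shmerkin (and to Remark \ref{Remark - 0-remark} for the $M$-invariance via Krylov--Bogolyubov and the Lebesgue marginal on the $u$-coordinate), and your reconstruction — weak-$*$ compactness, adaptedness by construction, and the martingale-LLN identification of the Ces\`aro entropy averages with $\frac{-\log\mu(\mathcal{A}_{N}^u(x))}{N\log n}$, with the subsequence chosen to realise the $\limsup$ in part (1) versus the $\liminf$ controlling all subsequences in part (2) — is precisely that argument. The only points you gloss over (that $\Omega$ is not closed, and that $Q$-a.e. continuity of $\mu'\mapsto H(\mu',\mathcal{D}_{n^l})$ requires showing the limit measures charge no grid boundaries) are standard technicalities of the CP framework and do not affect correctness.
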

\begin{Remark} \label{Remark - 0-remark}
\begin{itemize}
\item[(1)] Notice that the difference between (1) and (2) in Theorem \ref{Theorem CP exist} is that in part (1) we have to follow a specific subsequence to get $Q$, whereas in part (2) every subsequence will have a further subsequence that will yield such a distribution $Q$. 
\item[(2)] To see that  the distributions $Q$ as in Theorem \ref{Theorem CP exist} are $M$-invariant, note that their marginal on the $u$ coordinate must be the Lebesgue measure on $[0,1]$, so $M$ acts continuously on their support.
\end{itemize}
  \end{Remark}

\section{On the proof of Theorem \ref{Main Theorem} part (2)} \label{Section proof part (2)}
Let $F$ be a Bedford-McMullen carpet with exponents $m>n$ and digits $D$ such that $m\not \sim n$. Write $\theta:=\frac{\log n}{\log m}$. Let $\ell_{0}$ be a line not parallel to the major axes. Then the slope of $\ell_0$ can be written as $C\cdot m^{u_0}\neq 0$ for certain $u_0 \in [0,1)$ and some $C\neq 0$. We assume without loss of generality that $C=1$. 

\medskip
\medskip

\noindent{\textbf{From this point forward}} We work in  $\mathbb{T}$ and $\mathbb{T}^2$, so that the maps $T_m, T_n$  from \eqref{Furstenberg skew}  become continuous. This means that we think of $F$ and $\ell_0\cap F$ as subsets of $\mathbb{T}^2$ rather than $[0,1]^2$. Note that since $\mathbb{T}^2$ and $[0,1]^2$ are locally bi-Lipschitz equivalent,  the dimension of $\ell_0 \cap F$ as a subset of $\mathbb{T}^2$ is equal to its dimension as a subset of $[0,1]^2$.

\medskip

Let
$$ \gamma_0:= \dim_P \ell_0 \cap F$$
and let $\gamma < \gamma_0$. We will show that 
$$\gamma \leq \max \left\lbrace 0, \, \frac{\dim_P F}{\dim^* F} \cdot (\dim^* F-1) \right\rbrace .$$
It is clear that we may assume $\gamma>0$.

By Frostman's Lemma  we may find a probability measure $\mu_0 \in \mathcal{P}(\ell_0 \cap F)$ such that 
$$ \Dim(\mu_0,z)\geq \gamma ,\quad \text{ for } \mu_0 \text{ almost every } z.$$
In particular, $\mu_0$ is continuous (has no atoms). By Theorem \ref{Theorem CP exist} part (1) there  is a point $z_0 \in \ell_0 \cap F$ and a subsequence $N_j$ such that
\begin{equation} \label{Limit for Q}
 \frac{1}{N_j} \sum_{k=0} ^{N_j-1} \delta_{M^k (\mu_0,z_0,u_0)} \rightarrow Q
 \end{equation}
 where $Q$ is a CP distribution with 
\begin{equation} \label{dim Q}
\dim Q \geq \gamma.
\end{equation}
 
 Next, write 
 $$z_0=(x_0,y_0) = \left(\sum_{k=1} ^\infty \frac{x_k}{m^k}, \sum_{k=1} ^\infty \frac{y_k}{n^k}\right),\quad (x_k,y_k)\in D.$$
Notice that since $\mu_0$ is continuous, we may assume both $x_0,y_0 \notin \mathbb{Q}$, so that this representation is unique. Now, let
$$ \omega_0=(y_1, y_2,...) \in \left( \Pi_2 D \right) ^\mathbb{N} \subseteq \lbrace 0,...n-1 \rbrace^\mathbb{N}.$$
  Then, by perhaps moving to a further subsequence, we assume that there are $\sigma$ invariant measures $\nu,\eta, \rho \in \mathcal{P}((\Pi_2 D)^\mathbb{N})\subseteq \mathcal{P}(\lbrace 0,...n-1 \rbrace^\mathbb{N})$ such that:
 \begin{equation} \label{Eq for nu}
 \frac{1}{[N_j\cdot \theta]} \sum_{k=1} ^{[N_j \cdot \theta]} \delta_{\sigma ^k (\omega_0)} \rightarrow \nu,
 \end{equation}
  \begin{equation} \label{Eq for eta}
 \frac{1}{N_j - [N_j\cdot \theta]} \sum_{k=[N_j \cdot \theta]+1} ^{N_j} \delta_{\sigma ^k (\omega_0)} \rightarrow \eta,
 \end{equation}
  \begin{equation} \label{Eq for rho}
 \frac{1}{N_j} \sum_{k=1} ^{N_j} \delta_{\sigma ^k (\omega_0)} \rightarrow \rho.
 \end{equation}
Using \eqref{Eq for nu}, \eqref{Eq for eta} and \eqref{Eq for rho}, it is readily checked that  $\rho = \theta\cdot \nu + (1-\theta)\cdot \eta$.
 
The following Theorem is the key to the proof of Theorem \ref{Main Theorem} part (2). Recall the definition of $a(j)$ for $j\in \Pi_2 (D)$ from \eqref{Eq aj}.

\begin{theorem} \label{Key prop} Let $\lambda= Q( \lbrace \mu:\, \dim \mu >0 \rbrace)$. Then:
\begin{enumerate}
\item[(1)] $\gamma \leq \lambda\cdot (\dim^* F -1)$;

\item[(2)] 
$$\gamma+\lambda \leq \frac{\sum_{j\in \Pi_2 (D)} \nu([j])\log a(j)}{\log m} + \frac{h(\rho,\sigma)}{\log n},$$ where
  for  $j\in \lbrace 0,...,n-1\rbrace$ we write  $[j]=\lbrace \omega \in \lbrace 0,...,n-1\rbrace^\mathbb{N}:\quad \omega_1=j \rbrace$;

\item[(3)] $\gamma \leq \dim_P F -\lambda$.
\end{enumerate}
\end{theorem}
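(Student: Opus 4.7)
The three statements rest on the CP distribution $Q$ supplied by \eqref{Limit for Q} together with Wu's slicing theorem; I would prove them in the order (1), (3), (2).

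\textbf{For (1)}, the plan is to use the ergodic decomposition $Q = \int Q_\xi\, d\tau(\xi)$, so by Theorem~\ref{Theorem CP properties}(1)--(2) each $Q_\xi$ is itself an ergodic CP distribution whose typical measure $\mu_\omega$ is exact dimensional. By construction of $M$, $\mu_\omega$ is supported on a set $X_\omega$ that is a microset of $F$, so $\dim_H X_\omega \leq \dim^{*} F$. The delicate point will be to upgrade Wu's main geometric consequence of Sinai's factor theorem (\cite[Theorem~6.1]{wu2016proof}) --- originally stated for a single product set --- to the present setting, where the microsets $X_\omega$ vary with $\omega$, and conclude that on every ergodic component with $\dim Q_\xi > 0$ one has $\dim \mu_\omega \leq \dim_H X_\omega - 1 \leq \dim^{*} F - 1$. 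Integrating against $Q$ then gives
\[ \gamma \leq \dim Q = \int \dim \mu_\omega \, dQ \leq \lambda\,(\dim^{*} F - 1). \]

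\textbf{For (3)}, I would combine (1) with the global bound $\int \dim_H X_\omega\, dQ \leq \dim_P F$, which I expect to obtain from Theorem~\ref{Theorem BMC}(2) and the box-counting of approximate squares meeting $F$ at scales $n^{-N_j}$. From the slicing gap used in (1), $\dim_H X_\omega - \dim \mu_\omega \geq 1$ on $\{\dim \mu_\omega > 0\}$ and is non-negative elsewhere, so after integration
\[ \gamma + \lambda \leq \int \dim \mu_\omega\, dQ + \int (\dim_H X_\omega - \dim \mu_\omega)\, dQ = \int \dim_H X_\omega \, dQ \leq \dim_P F. \]

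\textbf{For (2)}, which I expect to be the main obstacle, the plan is to estimate $\tfrac{1}{N_j \log n}\, H(\mu_0, \mathcal{A}_{N_j}^{u_0})$ in two ways. By the CP construction this is at least $\gamma$, and the chain rule (Proposition~\ref{Lemma entropy}) gives
\[ H(\mu_0, \mathcal{A}_{N_j}^{u_0}) = H(\Pi_2 \mu_0,\, \mathcal{D}_{n^{N_j}}) + H_{\mu_0}\!\left(\mathcal{D}_{m^{\mathcal{R}(N_j,u_0)}}^{x} \,\middle|\, \mathcal{D}_{n^{N_j}}^{y}\right). \]
I would bound the first summand by $N_j\, h(\rho,\sigma) + o(N_j)$ using the empirical convergence \eqref{Eq for rho}, and bound the second via Lemma~\ref{Lemma 0.0}: given the $y$-prefix $y_1,\ldots,y_{N_j}$, only $\prod_{i=1}^{\mathcal{R}(N_j,u_0)} a(y_i)$ many $x$-cells at scale $m^{-\mathcal{R}(N_j,u_0)}$ meet $F$, which together with Lemma~\ref{Lemma  number of returns} and \eqref{Eq for nu} contributes at most $\frac{\sum_j \nu([j])\log a(j)}{\log m}$ after normalisation. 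These two steps would give $\gamma \leq \text{RHS}$. Recovering the sharper $\gamma + \lambda \leq \text{RHS}$ is the key difficulty: my plan is to apply Theorem~\ref{Theorem CP properties}(4) to positive-dimensional ergodic components of $Q$, where the local entropy of $\mu^{\mathcal{A}_k^u(z)}$ on $\mathcal{D}_{n^l}$ outside a small ball grows at the full rate $l\log n\, \dim \mu$; translating this through \eqref{Limit for Q} and Theorem~\ref{Theorem CP exist}(2), then comparing against the chain-rule upper bound at one further magnification step in $y$, should produce an additional uniform gain of $\lambda$ in the normalized entropy and yield (2). The hard part is keeping both the upper bound from the chain rule and the quantitative CP concentration essentially sharp at the same scales; this balancing is what makes the $+\lambda$ correction exactly right and is the main technical hurdle of the paper.
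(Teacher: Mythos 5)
Your plan for part (1) is essentially the paper's: decompose $Q$ (in fact an enriched distribution $R$ on $\Omega\times\mathbb{T}\times(\Pi_2 D)^{\mathbb{N}}$) into ergodic components, and on each positive-dimensional component use Wu's consequence of Sinai's factor theorem to extract a ``$+1$'' gain from the equidistribution of the slopes $R_\theta^k(u)$; you correctly flag the adaptation to varying microsets as the delicate point. But for parts (2) and (3) there are genuine gaps.

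For part (2), the crux you defer --- where the extra $+\lambda$ comes from --- is not recovered by your chain-rule computation, and one of its two halves is unjustified. The measure $\rho$ is the limit of empirical measures along the orbit of the \emph{single} digit string $\omega_0$ of $y_0$; its entropy $h(\rho,\sigma)$ controls (via Lemma \ref{Lema rho is intesity}) the intensity measure $\int\Pi_2(\mu)\,dR$ of the magnified marginals, not the global entropy $H(\Pi_2\mu_0,\mathcal{D}_{n^{N_j}})$ of the Frostman measure, so the bound ``first summand $\le N_j h(\rho,\sigma)+o(N_j)$'' has no justification. In the paper, (1) and (2) are two readings of a \emph{single} covering-number inequality: on each positive-dimensional ergodic component, Propositions \ref{Proposition two bounds} and \ref{Proposition third bound} give
$\dim\mu+1-o_\epsilon(1)\le \tfrac{1}{l\log n}\log\mathcal{N}_{n^{-l}}(A(\omega'))+\dim\tilde\rho_\xi$,
where the ``$+1$'' is exactly your slicing gap. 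Integrating over components and bounding $\mathcal{N}_{n^{-l}}(A(\omega))$ via Mackay's formula yields (1); keeping that term, using $T$-invariance and Lemma \ref{Lemma 0.2} to evaluate its average as $\sum_j\nu([j])\log a(j)/\log m$, and using that $(\tilde\rho,T_n)$ is a factor of $(\rho,\sigma)$ yields (2). The $+\lambda$ is thus the same ``$+1$'' you already invoked for (1), integrated over the set $\Theta$ of positive-dimensional components with $\tau(\Theta)=\lambda$; no separate entropy balancing act is needed.

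For part (3), your inequality $\gamma+\lambda\le\int\dim_H X_\omega\,dQ$ is fine, but the bound $\int\dim_H X_\omega\,dQ\le\dim_P F$ cannot be obtained by counting approximate squares meeting $F$: as the paper stresses, $X_\omega$ is in general \emph{not} a subset of $F$ (its horizontal fibers are governed by the tail digits of $\omega_0$, not by the cylinder structure of $F$), and this is precisely the non-trivial point. The paper instead deduces (3) purely from (2): writing $\rho=\theta\nu+(1-\theta)\eta$, using affinity of entropy, the Kolmogorov--Sinai bounds $h(\eta,\sigma)\le\log|\Pi_2(D)|$ and $h(\nu,\sigma)\le H(\nu,\mathcal{D})$, and the Gibbs inequality, the right-hand side of (2) is shown to be at most $\dim_B F=\dim_P F$. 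Without that entropy bookkeeping (or an equivalent substitute), your part (3) does not close.
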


%Recall that the notation ${o_{\epsilon} (1)}$ was introduced at the end of Section \ref{Section sketch}.  

Theorem \ref{Key prop} implies Theorem \ref{Main Theorem} part (2): Indeed, combining parts (1) and (3) we get
$$\gamma \leq \min \lbrace \lambda\cdot (\dim^* F -1), \,  \dim_P F -\lambda\rbrace.$$
 An elementary optimization argument shows that for each  $0\le \lambda\le 1$, the right hand term of the above inequality is always bounded by the following quantity
 $$ \max \left\lbrace 0, \, \frac{\dim_P F}{\dim^* F} \cdot (\dim^* F-1) \right\rbrace.$$
 Hence we obtain the desired conclusion of Theorem \ref{Main Theorem} part (2).

We thus proceed to prove Theorem \ref{Key prop}: First, we will establish parts (1) and (2).  We will then show that part (3) follows from part (2).  

\subsection{On the proof of Theorem \ref{Key prop}} \label{Section proof of key prop}
\subsubsection{Preliminaries}
First, we extend the definition of the map $M$ from Definition \ref{Defition CP}: For every $u\in \mathbb{T}$ define a map $\sigma_u :\lbrace 0,...,n-1\rbrace^\mathbb{N} \rightarrow \lbrace 0,...,n-1\rbrace^\mathbb{N}$ via
 \begin{equation*}
\sigma_u (\omega) = \begin{cases}
      \sigma (\omega)   &\quad\text{if } u\in [1 - \theta,1) \\
       \omega  &\quad\text{if } u\in [0, 1-\theta). \\
     
     \end{cases}
 \end{equation*}
Recall \eqref{eq def Omega1} for the definition of the space $\Omega$. 
We define a new map  
$$T:\Omega \times   \mathbb{T} \times \lbrace 0,...,n-1\rbrace^\mathbb{N} \rightarrow \Omega \times   \mathbb{T} \times \lbrace 0,...,n-1\rbrace^\mathbb{N} $$ 
via
$$ T(\mu,z,u, \omega) = \left( M(\mu,z, u), \sigma_u (\omega) \right). $$
Recall that the sequence $\{N_j\}$ was chosen such that \eqref{Eq for nu}, \eqref{Eq for eta} and \eqref{Eq for rho} hold. By perhaps moving to a further subsequence of $\{N_j\}$ and using the irrationality of $\theta$, we may assume that
\begin{equation} \label{Eq equi for R}
\frac{1}{N_j} \sum_{k=0} ^{N_j-1} \delta_{T^k (\mu_0,z_0,u_0, \omega_0)} \rightarrow R, \text{ and } R \text{ is } T \text{ invariant.}
\end{equation} 
To see why we may assume  $R$ is $T$ invariant, we recall Remark \ref{Remark - 0-remark} part (2).
Notice that by \eqref{Limit for Q}, we have $R_{1,2,3} = Q$, where we recall that $R_{1,2,3}$ denotes the marginal of $R$ on the first $3$ coordinates. Also, recall that   for every $k \in \mathbb{N}$ and $u\in \mathbb{T}$ we write
$$ \mathcal{R}(k,u):= |\lbrace 0 \leq i \leq k: \quad R_\theta ^i (u) \in [1-\theta,1) \rbrace|$$
and by Lemma \ref{Lemma  number of returns} there is some uniform constant $C>0$ such that 
\begin{equation} \label{Eq number of returns}
[\theta\cdot k] - C \leq \mathcal{R}(k,u_0) \leq [\theta\cdot k]+C, \quad \forall k\in \mathbb{N}, u\in \mathbb{T}.
\end{equation}

Next, recall that for $j\in \Pi_2 (D)$ we defined
$$ D_j = \lbrace i: (i,j)\in D \rbrace.$$
For $\omega \in (\Pi_2 D) ^\mathbb{N}$  we denote
\begin{equation} \label{Eq for A omega}
A(\omega) = \left\lbrace \sum_{k=1} ^\infty \frac{b_k}{m^k}: b_k \in D_{\omega_k} \right\rbrace.
\end{equation}

The following Lemma gives a description of  $R$ typical points.
\begin{Lemma} \label{Lemma 0.1}
For $R$ almost every $(\mu, z, u, \omega)$ we have:
\begin{enumerate}
\item The measure $\mu$ is supported on a line with slope $m^u$.

\item $ \Pi_1 \left( \supp(\mu) \right) \subseteq A(\omega).$
\end{enumerate}
\end{Lemma}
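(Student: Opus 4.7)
The plan is to verify both properties along the entire orbit $\{T^k(\mu_0, z_0, u_0, \omega_0)\}_{k \geq 0}$ and then transfer them to the limit distribution $R$ using \eqref{Eq equi for R} and the Portmanteau theorem. In both parts the key is to identify a closed subset of $\Omega \times \mathbb{T} \times \lbrace 0, \ldots, n-1 \rbrace^{\mathbb{N}}$ encoding the desired property and to which each iterate of the orbit belongs, possibly only asymptotically.

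For part (1), I would first check by direct computation that $M$ preserves the property ``$\supp(\mu)$ lies on a line of slope $m^u$'' with $u$ transforming to $R_\theta(u)$: on the relevant cell $\mathcal{D}_m(x) \times \mathcal{D}_n(y)$ (if $u \in [1-\theta,1)$) or $[0,1] \times \mathcal{D}_n(y)$ (if $u \in [0, 1-\theta)$), the map $\Phi_u$ is affine and sends a line of slope $m^u$ to one of slope $n m^{u-1} = m^{u+\theta-1}$ or $n m^u = m^{u+\theta}$ respectively, which in both cases equals $m^{R_\theta(u)}$. Since $\mu_0$ is supported on $\ell_0$ of slope $m^{u_0}$, an easy induction gives that for each $k$ the first coordinate of $T^k(\mu_0, z_0, u_0, \omega_0)$ is supported on a line of slope $m^{R_\theta^k(u_0)}$, matching the third coordinate. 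To pass to $R$, I would verify that
$$S := \{(\mu, z, u, \omega) : \supp(\mu) \text{ lies on some line of slope } m^u\}$$
is closed: if $(\mu_k,z_k,u_k,\omega_k) \in S$ converge to $(\mu,z,u,\omega)$ with supporting lines $\ell_k$, then after passing to a subsequence so that $\ell_k$ converges in Hausdorff distance to a line $\ell$ of slope $m^u$, the Portmanteau theorem applied to closed $\epsilon$-neighborhoods of $\ell$ forces $\supp(\mu) \subseteq \ell$. Since every iterate of the orbit lies in $S$, the closedness of $S$ combined with \eqref{Eq equi for R} yields $R(S) = 1$.

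For part (2), I would apply Lemma \ref{Lemma 0.0} with $(x,y) = (x_0,y_0)$: since $\supp(\mu_0) \subseteq F$, the first-coordinate support of $T^k(\mu_0, z_0, u_0, \omega_0)$ is contained in $T_m^{\mathcal{R}(k,u_0)} \Pi_1(\mathcal{A}_k^{u_0}(z_0) \cap F)$, which by the lemma lies in a neighborhood of radius $m^{-(1-\theta)k/\theta + O(1)}$ of $A(\sigma^{\mathcal{R}(k, u_0)}(\omega_0))$. Moreover, by the definitions of $\sigma_u$ and $\mathcal{R}(k,u_0)$, the fourth coordinate of $T^k(\mu_0, z_0, u_0, \omega_0)$ equals $\sigma^{\mathcal{R}(k, u_0)}(\omega_0)$ exactly. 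For $\epsilon > 0$, the set
$$S'_\epsilon := \{(\mu, z, u, \omega) : \Pi_1(\supp(\mu)) \text{ lies in the closed } \epsilon\text{-neighborhood of } A(\omega)\}$$
is closed by the analogous Portmanteau argument, using that $\omega \mapsto A(\omega)$ is continuous in the Hausdorff distance (sequences agreeing on the first $N$ symbols produce sets $A(\omega)$ differing by at most $m^{-N}$). For any fixed $\epsilon > 0$, all but finitely many iterates of the orbit lie in $S'_\epsilon$, hence $R(S'_\epsilon) = 1$; letting $\epsilon \to 0$ and using $\bigcap_{\epsilon > 0} S'_\epsilon = \{(\mu,z,u,\omega) : \Pi_1(\supp(\mu)) \subseteq A(\omega)\}$ yields the claim.

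The main technical delicacy is that Lemma \ref{Lemma 0.0} gives only a neighborhood (not exact) containment at each finite scale $k$; this forces the use of fattened sets $S'_\epsilon$ followed by the limit as $\epsilon \to 0$, rather than the direct invariance argument available in part (1). A minor subtlety is interpreting ``line of slope $m^u$'' consistently in $\mathbb{T}^2$, which is harmless since the magnification cells on which $\Phi_u$ acts are small enough to lift canonically to $[0,1]^2$.
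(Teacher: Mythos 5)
Your proof is correct and takes essentially the same approach as the paper: both verify the two properties along the orbit of $T$ (the slope computation for $\Phi_u$ for part (1), and Lemma \ref{Lemma 0.0} for part (2)) and then pass to the limit distribution $R$. The only cosmetic difference is that the paper argues pointwise on $\supp(R)$ by extracting orbit subsequences converging to a given point, whereas you dualize this via closed sets and the Portmanteau theorem; your version is if anything slightly more careful about the $\epsilon$-fattening needed because Lemma \ref{Lemma 0.0} only constrains finitely many digits at each finite scale.
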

\begin{proof}
Fix $(\mu, z, u, \omega)\in \supp (R)$. By \eqref{Eq equi for R}, there exists a sequence $k_p$ such that 
$$T^{k_p} (\mu_0,z_0,u_0, \omega_0) \rightarrow (\mu,z,u,\omega)$$
By the definition of $T$ the first coordinate of $T^{k_p}   (\mu_0,z_0,u_0, \omega_0)$ is $\mu_0 ^{\mathcal{A}_{k_p} ^{u_0} (z_0)}$, and the fourth coordinate is $\sigma^{\mathcal{R}(u_0,k_p)} (\omega_0)$. So, applying Lemma \ref{Lemma 0.0} we have that 
$$  \Pi_1 \left( \supp \left( \mu_0 ^{\mathcal{A}_{k_p} ^{u_0} (z_0)} \right) \right)$$
 is contained in
$$ \left\lbrace \sum_{i=1} ^{\infty} \frac{b_i}{m^i}: b_i \in \Pi_2 D, \quad \text {and for } 1\leq i \leq \frac{(1-\theta)k_p}{\theta}-o_{k_p}(C),\quad b_i \in D_{ \sigma^{\mathcal{R}(k,u_0)} (\omega_0) (i)} \right\rbrace$$
where by $\sigma^{\mathcal{R}(k,u_0)} (\omega_0) (i)$ we mean the $i$-th coordinate of $\sigma^{\mathcal{R}(k,u_0)} (\omega_0)$, and $C$ is the constant from Lemma \ref{Lemma  number of returns}. Since $\sigma^{\mathcal{R}(k_p,u_0)} (\omega_0) \rightarrow \omega$,  taking $p\rightarrow \infty$ yields part (2) of  the Lemma.

Part (1) is a consequence of the fact that for every $p \in \mathbb{N}$ the measure $\mu_0 ^{\mathcal{A}_{k_p} ^{u_0} (z_0)}$ is supported on a line with slope $m^{R_\theta ^{k_p} (u_0)}$, and since  $R_\theta ^{k_p} (u_0) \rightarrow u$.
\end{proof}

We also have the following estimate. Recall that for  $r>0$, $\mathcal{N}_r (A)$ denotes the minimal number of sets of diameter $\leq r$ required to cover the bounded set $A$.
\begin{Lemma} \label{Lemma 0.2}
Let $q\in \mathbb{N}$ be large. Then
$$ \int \frac{\log \mathcal{N}_{m^{-q}} \left( A(\omega) \right) }{q\log m} d R(\mu,z,u,\omega) = \frac{\sum_{j\in \Pi_2 (D)} \nu([j])\log a(j)}{\log m} +o_q(1).$$
\end{Lemma}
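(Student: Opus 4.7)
The plan is to reduce the covering number $\mathcal{N}_{m^{-q}}(A(\omega))$ to a Birkhoff-type digit sum $\sum_{k=1}^{q}\log a(\omega_k)$, and then evaluate the integral via the weak-$*$ convergence \eqref{Eq equi for R} together with the uniform control $\mathcal{R}(l,u_0)=l\theta+O(1)$ from Lemma \ref{Lemma  number of returns}. From \eqref{Eq for A omega}, $A(\omega)$ meets exactly $\prod_{k=1}^{q}a(\omega_k)$ of the $m$-adic intervals of length $m^{-q}$, one per admissible prefix $b_1\in D_{\omega_1},\ldots,b_q\in D_{\omega_q}$. Since any set of diameter at most $m^{-q}$ meets at most two adjacent cells of $\mathcal{D}^{1}_{q}$, we obtain $\tfrac{1}{2}\prod_{k=1}^{q}a(\omega_k)\le \mathcal{N}_{m^{-q}}(A(\omega))\le \prod_{k=1}^{q}a(\omega_k)$, hence
$$\log \mathcal{N}_{m^{-q}}(A(\omega))=\sum_{k=1}^{q}\log a(\omega_k)+O(1)$$
uniformly in $\omega$. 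Dividing by $q\log m$ and integrating against $R$, the lemma reduces to showing
$$\frac{1}{q}\sum_{k=1}^{q}\int \log a(\omega_k)\,dR=\sum_{j\in \Pi_2(D)}\nu([j])\log a(j)+o_q(1).$$

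To evaluate each $\int \log a(\omega_k)\,dR$, note that by the definition of $T$ and $\sigma_u$, the fourth coordinate of $T^{l}(\mu_0,z_0,u_0,\omega_0)$ is $\sigma^{r_l}\omega_0$, where $r_l:=\#\{0\le i<l:R_\theta^i u_0\in[1-\theta,1)\}=\mathcal{R}(l,u_0)+O(1)$. Applying \eqref{Eq equi for R} with the continuous test function $(\mu,z,u,\omega)\mapsto \log a(\omega_k)$ yields
$$\int \log a(\omega_k)\,dR=\lim_{j\to\infty}\frac{1}{N_j}\sum_{l=0}^{N_j-1}\log a\bigl((\omega_0)_{r_l+k}\bigr).$$
Summing over $k=1,\ldots,q$ and re-indexing by the target position $i=r_l+k$,
$$\sum_{k=1}^{q}\frac{1}{N_j}\sum_{l=0}^{N_j-1}\log a\bigl((\omega_0)_{r_l+k}\bigr)=\frac{1}{N_j}\sum_{i}w_i^{(q)}\log a\bigl((\omega_0)_i\bigr),$$
where $w_i^{(q)}:=\#\{l\in[0,N_j-1]:r_l\in[i-q,i-1]\}$. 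Since $r_l$ is nondecreasing, this is a consecutive block of $l$'s, and Lemma \ref{Lemma  number of returns} gives $w_i^{(q)}=q/\theta+O(1)$ uniformly in the bulk range $q\le i\le \lfloor\theta N_j\rfloor-q$; boundary indices contribute only $O(q)$ bounded summands, which are negligible after dividing by $N_j$.

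Substituting this estimate and dividing by $q$,
$$\frac{1}{q}\sum_{k=1}^{q}\int \log a(\omega_k)\,dR=\lim_{j\to\infty}\frac{1}{\theta N_j}\sum_{i=1}^{\lfloor\theta N_j\rfloor}\log a\bigl((\omega_0)_i\bigr)+O(1/q),$$
and by \eqref{Eq for nu} applied to the continuous function $\omega\mapsto \log a(\omega_1)$, the limit equals $\int \log a(\omega_1)\,d\nu=\sum_{j\in\Pi_2(D)}\nu([j])\log a(j)$. Since $O(1/q)=o_q(1)$, the lemma follows. The main subtlety is that the $\omega$-marginal of $R$ is a \emph{weighted} empirical measure with multiplicities $c_i:=\#\{l:r_l=i\}$, and therefore does not equal $\nu$ in general; what makes the proof work is that Lemma \ref{Lemma  number of returns} forces any window $c_{i-q}+\cdots+c_{i-1}$ of $q$ consecutive multiplicities to equal $q/\theta+O(1)$, and this uniformity is precisely the cancellation that makes the weighted average agree with the $\nu$-integral after averaging over $k$.
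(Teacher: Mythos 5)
Your argument is correct and is essentially the paper's own proof: both reduce $\log\mathcal{N}_{m^{-q}}(A(\omega))$ to the digit sum $\sum_{k=1}^q\log a(\omega_k)+O(1)$, pass to Birkhoff averages along the $T$-orbit via \eqref{Eq equi for R}, and use the window count $\#\{l:\mathcal{R}(l,u_0)\in[i-q,i-1]\}=q/\theta+O(1)$ from Lemma \ref{Lemma  number of returns} to match the weighted empirical sum with the $\nu$-average from \eqref{Eq for nu}. Your closing remark on why the multiplicities $c_i$ cancel only after averaging over the window is exactly the point the paper's computation exploits, just stated more explicitly.
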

\begin{proof}
First, notice that by definition of $A_\omega$ (recall \eqref{Eq for A omega}), and since $a(j) = |D_j|$ for all $j\in \Pi_2 (D)$,
$$\prod_{k=1} ^q a(\omega_k) \leq  \mathcal{N}_{m^{-q}} \left( A(\omega) \right) \leq  \prod_{k=1} ^q a(\omega_k)\cdot 5$$
where the $5$ factor arises from the possible presence of elements with multiple base $m$ representation in $A(\omega)$. Therefore,
$$   \frac{\log \mathcal{N}_{m^{-q}} \left( A(\omega) \right)}{q\log m} = \frac{\sum_{k=1} ^q \log a(\omega_k)}{q \log m} +o_q(1).$$
Then by \eqref{Eq equi for R} and the previous equation it suffices to show that
\begin{equation} \label{Eq suffices}
 \lim_{j\rightarrow \infty} \frac{1}{N_j} \sum_{k=1} ^{N_j} \frac{\sum_{i=1} ^q \log a(\omega_{\mathcal{R}(k,u_0)+i} )}{q \log m} = \frac{\sum_{j\in \Pi_2 (D)} \nu([j]) \log a(j)}{\log m} +o_q(1).
\end{equation}
To this end, we first notice that by the definition \eqref{Eq for nu} of $\nu$, we have 
\begin{equation} \label{Eq 7}
\lim_{j\rightarrow \infty} \frac{1}{[\theta\cdot N_j]} \sum_{k=1} ^{[N_j \cdot \theta]} \frac{\log a(\omega_k)}{\log m} =\frac{\sum_{j\in \Pi_2 (D)} \nu([j])\log a(j)}{\log m}.
\end{equation}
Also, assuming $q \in \mathbb{N}$ is large and $p >q$ we have, by \eqref{Eq number of returns},
$$ \left| \lbrace k\in \mathbb{N}: \, \mathcal{R}(k,u_0)+1 \leq p \leq \mathcal{R}(k,u_0)+q \rbrace \right| = \frac{q}{\theta}(1+o_q (C))$$
and consequently,
$$ \sum_{k=1} ^{N_j} \sum_{i=1} ^q \log a(\omega_{\mathcal{R}(k,u_0)+i} ) = \frac{q}{\theta}(1+o_q (C)) \sum_{k=1} ^{[N_j \cdot \theta]} \log a(\omega_k) + o_{N_j} (1).$$
Dividing the latter equation by $N_j\cdot q\cdot \log m$ and taking $j\rightarrow \infty$, we see via \eqref{Eq 7} that  \eqref{Eq suffices} holds true. This implies the Lemma.
\end{proof}

Finally, let $\Xi: \lbrace 0,...,n-1 \rbrace^\mathbb{N} \rightarrow \mathbb{T}$ be the base $n$ coding map 
\begin{equation*}
\Xi(\omega) = \sum_{k=1} ^\infty \frac{\omega_k}{n^k}.
\end{equation*}
Recall the definition of the measure $\rho$ from \eqref{Eq for rho}. 
\begin{Lemma} \label{Lema rho is intesity}
The measure $\Xi(\rho)$ is $T_n$ invariant and satisfies 
$$ \Xi(\rho) = \int \Pi_2 (\mu)\, dR(\mu,z,t,\omega),$$
here $\Pi_2 (x,y)=y$ is the coordinate projection in $\mathbb{T}^2$.
\end{Lemma}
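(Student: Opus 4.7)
The plan has three pieces. First, we verify $T_n$-invariance of $\Xi(\rho)$: the measure $\rho$, being a weak-$*$ Cesaro limit of Dirac masses along the $\sigma$-orbit of $\omega_0$, is automatically $\sigma$-invariant (the telescoping difference $\sigma_*\rho - \rho$ vanishes in the Cesaro limit). Since the base-$n$ coding map $\Xi$ intertwines $\sigma$ with $T_n$, i.e. $\Xi\circ\sigma = T_n\circ\Xi$, the pushforward $\Xi(\rho)$ inherits $T_n$-invariance.

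Next, we reduce the identity $\Xi(\rho)=\int \Pi_2(\mu)\,dR$ to a standard CP-distribution identity applied to the $\omega$-free marginal. Let $Q := R_{1,2,3}$ denote the projection of $R$ onto its first three coordinates; by \eqref{Limit for Q} and Theorem~\ref{Theorem CP exist}(1), $Q$ is a CP distribution. Applying Theorem~\ref{Theorem CP properties}(3) to $Q$ yields $Q_2 = \int \mu\, dQ(\mu,x,u)$, where $Q_2 \in \mathcal{P}(\mathbb{T}^2)$ is the marginal of $Q$ on the $z$-coordinate. Since $Q_2$ coincides with the $z$-marginal $R_2$ of $R$, and $\mu$ depends only on the first coordinate, we obtain $R_2 = \int \mu\, dR$. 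Applying the (weak-$*$ continuous) pushforward $\Pi_2$ to both sides gives $\int \Pi_2(\mu)\, dR = \Pi_2(R_2)$.

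Finally, I would identify $\Pi_2(R_2)$ with $\Xi(\rho)$ by a direct unpacking of the orbit structure. Inspecting \eqref{Furstenberg skew} shows that $\Pi_2\circ \Phi_u = T_n\circ \Pi_2$ for every $u\in\mathbb{T}$, regardless of which branch of the piecewise definition is active. Iterating, the $\Pi_2$-image of the $z$-coordinate of $T^k(\mu_0,z_0,u_0,\omega_0)$ is exactly $T_n^k(y_0)$. Combining this with \eqref{Eq equi for R} and weak-$*$ continuity of $\Pi_2$ yields
$$ \Pi_2(R_2) = \lim_{j\to\infty} \frac{1}{N_j}\sum_{k=0}^{N_j-1} \delta_{T_n^k(y_0)}. $$
Since $y_0 = \Xi(\omega_0)$ and $T_n^k \circ \Xi = \Xi \circ \sigma^k$, the definition \eqref{Eq for rho} of $\rho$ gives
$$ \Xi(\rho) = \lim_{j\to\infty} \frac{1}{N_j}\sum_{k=1}^{N_j} \delta_{T_n^k(y_0)}, $$
and the two Cesaro limits agree up to a single boundary Dirac mass of weight $1/N_j$.

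The only step that uses any genuine dynamical input is the CP-distribution identity $R_2 = \int \mu\, dR$, supplied by Theorem~\ref{Theorem CP properties}(3); everything else is a symbol-chase through the skew structure of $\Phi_u$ and the coding $\Xi$. I do not expect a genuine obstacle.
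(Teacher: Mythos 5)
Your proposal is correct and follows essentially the same route as the paper's own proof: both identify $\Xi(\rho)$ with the limit of the empirical distributions $\frac{1}{N_j}\sum_k \delta_{T_n^k(y_0)}$ via the factor property $\Xi\circ\sigma = T_n\circ\Xi$, match this with the $y$-marginal of $R$ through the orbit structure of $T$, and then invoke Theorem \ref{Theorem CP properties}(3) to rewrite that marginal as $\int \Pi_2(\mu)\,dR$. You merely spell out a few steps the paper leaves implicit (the branch-independence of $\Pi_2\circ\Phi_u$, the off-by-one in the Cesaro sums, and the $T_n$-invariance), all of which check out.
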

\begin{proof}
Recall from \eqref{Eq for rho} that
 \begin{equation*}
 \frac{1}{N_j} \sum_{k=1} ^{N_j} \delta_{\sigma ^k (\omega_0)} \rightarrow \rho.
 \end{equation*}
Also, $\Xi$ is a factor map in the sense that $\Xi \circ \sigma = T_n \circ \Xi$. So, since $\Xi$ is a continuous factor map, applying it to both sides of this equation yields
 \begin{equation*}
 \frac{1}{N_j} \sum_{k=1} ^{N_j} \delta_{T_n ^k (y_0)} \rightarrow \Xi(\rho).
 \end{equation*}
We also have
 \begin{equation*} 
\frac{1}{N_j} \sum_{k=0} ^{N_j-1} \delta_{T^k (\mu_0,z_0,u_0, \omega_0)} \rightarrow R.
\end{equation*} 
Combining the two last displayed equations, we see that $\Xi(\rho)$ equals the marginal of $R$ on the second coordinate $y$ of its projection to $\mathbb{T}^2$ with coordinates $(x,y)$. Now, by  Theorem \ref{Theorem CP properties}, $(x,y)$ is distributed according to 
$$ \int \mu \, dR(\mu,z,t,\omega).$$
So, the marginal of $R$ on the $y$ coordinate is given by
$$\int \Pi_2 (\mu) dR(\mu,z,t,\omega).$$
This proves the Lemma.
\end{proof}
\subsubsection{The skew product $S$}
For any $T$ invariant distribution $R'$ we denote
$$ \dim R '= \int \dim \mu \,  dR'(\mu,z,u,\omega)$$
which is equal to $\dim Q$ for our distribution $R$. Now, consider the ergodic decomposition of $R$,
$$R =\int R_\xi \, d\tau(\xi).$$
By Theorem \ref{Theorem CP properties}, almost every $R_\xi$ satisfies that its marginal on the first three coordinates $(R_\xi)_{1,2,3}$ is a CP distribution in the sense of Definition \ref{Defition CP}.  
$$ $$
\textbf{From this point forward}
\begin{equation} \label{Assumption positive dim}
\text{ Fix an ergodic component } R_\xi \text{ such that } \dim R_\xi >0.
\end{equation}  
Then for an $R_\xi$ typical $(\mu,z,u,\omega)$ we have by ergodicity 
\begin{equation} \label{Eq 8}
 \frac{1}{N}\sum_{k=1} ^N \mu^{A_k ^u (z)} \rightarrow \int \nu \, dR_\xi (\nu,z,u,\omega).
\end{equation}
Also, by  Lemma \ref{Lemma 0.1} for every $k$ we have
\begin{equation} \label{Eq 9}
\Pi_1 (\supp \left( \mu^{A_k ^u (z)} \right)  )\subseteq A(\sigma_{R_\theta ^k (u)}\circ ... \circ \sigma_{R_\theta (u)} \circ \sigma_u (\omega) ).
\end{equation}
Now, consider the measure $\kappa \in \mathcal{P}(\mathbb{T}^2 \times \mathbb{T} \times (\Pi_2 D)^\mathbb{N} )$ defined by
$$ \kappa = \int \mu \times \delta_{ \lbrace (u, \omega) \rbrace }   dR_\xi (\mu, z, u, \omega)$$
and let $S: \mathbb{T}^2 \times \mathbb{T} \times (\Pi_2 D)^\mathbb{N} \rightarrow \mathbb{T}^2 \times \mathbb{T} \times (\Pi_2 D)^\mathbb{N}$ be the map
$$ S(z,u,\omega) = (\Phi_u (z), \, R_\theta (u), \, \sigma_u (\omega))$$
where we recall that $\Phi_u$ was defined in \eqref{Furstenberg skew}.
\begin{Lemma} \label{Lemma apply Sinai to S}
The measure $\kappa$ is $S$ invariant and ergodic. Moreover, for $\kappa$ almost every $(z,u,\omega)$ there is an $R_\xi$ typical measure $\mu$ such that \eqref{Eq 8} and \eqref{Eq 9} hold true. 
\end{Lemma}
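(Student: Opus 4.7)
The plan is to realize $\kappa$ as a measure-preserving factor of $(R_\xi, T)$ and to read off the three assertions of the lemma from that factor structure. Introduce the projection
$$\pi: \Omega \times \mathbb{T} \times (\Pi_2 D)^\mathbb{N} \longrightarrow \mathbb{T}^2 \times \mathbb{T} \times (\Pi_2 D)^\mathbb{N}, \qquad \pi(\mu, z, u, \omega) = (z, u, \omega).$$
Comparing the definitions of $T$ and $S$ coordinate by coordinate ($T$ applies $\Phi_u$ in the $z$-slot, $R_\theta$ in the $u$-slot, and $\sigma_u$ in the $\omega$-slot), one obtains the intertwining $S \circ \pi = \pi \circ T$. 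By Theorem \ref{Theorem CP properties}(1), the ergodic component $R_\xi$ is itself a CP distribution, so conditional on $\mu$ the coordinate $z$ is distributed as $\mu$; combining this with the definition $\kappa = \int \mu \times \delta_{(u,\omega)}\, dR_\xi$ yields the identification $\kappa = \pi_* R_\xi$. From here $S$-invariance is immediate: $S_*\kappa = S_*\pi_* R_\xi = \pi_* T_* R_\xi = \pi_* R_\xi = \kappa$.

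Ergodicity follows from the general fact that factors of ergodic systems are ergodic. Concretely, given any $S$-invariant Borel set $A$, the preimage $\pi^{-1}(A)$ is $T$-invariant, so $R_\xi(\pi^{-1}(A)) \in \{0,1\}$ by ergodicity of $R_\xi$, whence $\kappa(A) = R_\xi(\pi^{-1}(A)) \in \{0,1\}$.

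For the final assertion, let $E \subseteq \Omega \times \mathbb{T} \times (\Pi_2 D)^\mathbb{N}$ be the set where both \eqref{Eq 8} and \eqref{Eq 9} hold. Equation \eqref{Eq 9}, for each fixed $k$, follows by applying Lemma \ref{Lemma 0.1} to the $T^k$-iterate of a generic point and invoking $T$-invariance of $R_\xi$; a countable intersection over $k$ upgrades this to the full statement almost surely. Equation \eqref{Eq 8} is the Birkhoff ergodic theorem for the ergodic system $(T, R_\xi)$, applied to the continuous $\mathcal{P}(\mathbb{T}^2)$-valued observable $(\mu, z, u, \omega) \mapsto \mu$. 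Hence $R_\xi(E) = 1$, and disintegrating $R_\xi$ over its factor $\kappa$ via $\pi$ shows that for $\kappa$-a.e. $(z, u, \omega)$ the conditional fiber measure assigns full mass to $\{\mu : (\mu, z, u, \omega) \in E\}$; in particular a qualifying $\mu$ exists.

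The delicate step is the identification $\kappa = \pi_* R_\xi$, which requires the conditional law of $z$ given $(\mu, u, \omega)$ under $R_\xi$ to coincide with $\mu$ — a slight strengthening of the bare CP property of Definition \ref{Defition CP}. I expect this to be verified directly from the limiting construction in Theorem \ref{Theorem CP exist}, where at each step $z$ is the distinguished point inside the approximate square, and the corresponding magnified measure $\mu$ equidistributes its mass over such points; this is the main technical content and the principal obstacle in executing the plan.
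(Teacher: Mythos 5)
Your argument is essentially the paper's: the same projection $(\mu,z,u,\omega)\mapsto(z,u,\omega)$ intertwining $T$ with $S$, invariance and ergodicity read off from the factor structure, and the last assertion obtained because \eqref{Eq 8} and \eqref{Eq 9} are $R_\xi$-generic properties. The ``delicate step'' you flag --- that the conditional law of $z$ given $(\mu,u,\omega)$ is $\mu$, so that $\kappa$ coincides with the pushforward of $R_\xi$ --- is exactly what the paper invokes via Theorem \ref{Theorem CP properties} part (3) (the adaptedness of CP distributions), so your plan is correct and complete once that standard fact is cited.
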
 
\begin{proof}
Recall that $\Pi_{2,3,4}: \mathcal{P}(\mathbb{T}^2) \times \mathbb{T}^2 \times \mathbb{T} \times \lbrace 0,...,n-1\rbrace^\mathbb{N} \rightarrow  \mathbb{T}^2 \times \mathbb{T} \times \lbrace 0,...,n-1\rbrace^\mathbb{N}$ is the projection
$$(\mu,z,u,\omega) \mapsto (z,u,\omega).$$
Then $\Pi_{2,3,4} \circ T = S\circ \Pi_{2,3,4}$. By Theorem \ref{Theorem CP properties} part (3) we have that $\kappa = \Pi_{2,3,4} R_\xi$. In particular, $\kappa$ is  $S$ invariant. Moreover, $(S,\kappa)$ is a factor of the ergodic system $(T, R_\xi)$, and therefore it is ergodic.

The last assertion is an immediate consequence of the definition of $\kappa$, and since \eqref{Eq 8} and \eqref{Eq 9} are $R_\xi$ generic properties.
\end{proof}

Let us now introduce a generator for the system $(S,\kappa)$. We first recall the definition of generators: Let $(X,U)$ be a  dynamical  system, and  let $\mathcal{D}$ be a finite partition of $X$. Let $\mathcal{D}_k = \bigvee_{i=0} ^{k-1} U^{-i} \mathcal{D}$ denote the coarsest common refinement of $\mathcal{D}, U^{-1} \mathcal{D},...,U^{-k+1}\mathcal{D}$. The sequence $\mathcal{D}_k$ is called the filtration generated by $\mathcal{D}$ with respect to $U$.  Now, if the smallest sigma algebra that contains $\mathcal{D}_k$ for all $k$ is the Borel sigma algebra, we say that $\mathcal{D}$ is an $S$-generating partition  for $(X,U)$.

Back to our system $(S,\kappa)$, let 
$$\mathcal{C} = \left( \mathcal{D}_m \times \mathcal{D}_n \right) \times \lbrace [0,1-\theta), [1-\theta,1)\rbrace \times \lbrace [j]:j\in \Pi_2 D\rbrace$$
be a  partition of the space
$$ \mathbb{T}^2 \times \mathbb{T} \times (\Pi_2 D)^\mathbb{N}.$$
Write $\mathcal{W}= \lbrace [0,1-\theta), [1-\theta,1)\rbrace$.
\begin{Lemma} \label{Lemma generator}
The partition $\mathcal{C}$ is an $S$-generating partition. Moreover,  $\kappa (\partial C)=0$ for every $k\in \mathbb{N}$ and every $C\in \mathcal{C}_k$.
\end{Lemma}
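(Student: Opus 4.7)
The claim splits into two parts: that $\mathcal{C}$ is a generating partition for the system $(S,\kappa)$, and that every atom of $\mathcal{C}_k=\bigvee_{i=0}^{k-1}S^{-i}\mathcal{C}$ has $\kappa$-null topological boundary, for each $k\in\mathbb{N}$. I will treat them in turn.

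For the generating property I will show that, for $\kappa$-a.e.\ pair of points sharing the same atom of $\bigvee_{i=0}^{\infty}S^{-i}\mathcal{C}$, the two points coincide, and then invoke standard results on atoms of countable generators in standard Borel spaces. Track each coordinate separately. Because $\theta$ is irrational, the endpoints of the pullbacks $R_\theta^{-i}\mathcal{W}$ (where $\mathcal{W}=\{[0,1-\theta),[1-\theta,1)\}$) are dense in $\mathbb{T}$, so $\mathcal{W}$ is $R_\theta$-generating and the $u$-coordinate is determined. Once $u=u'$, the shift count $\mathcal{R}(i,u)$ is common to both orbits, and Lemma \ref{Lemma  number of returns} gives $\mathcal{R}(i,u)\to\infty$; observing the first coordinate of $\sigma^{\mathcal{R}(i,u)}(\omega)$ for every $i$ then pins down all of $\omega$. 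For the $y$-coordinate, the iterates $T_n^{i}(y)$ falling in a specified atom of $\mathcal{D}_n$ at each step fixes every base-$n$ digit of $y$. Finally, the $x$-coordinate is refreshed by $T_m$ exactly when $R_\theta^{i}(u)\in[1-\theta,1)$; by Lemma \ref{Lemma  number of returns} this happens infinitely often, so all base-$m$ digits of $x$ are eventually revealed.

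For the boundary condition, note first that the $u$-marginal of $\kappa$ is Lebesgue: the $u$-marginal of the $T$-invariant distribution $R$ is $R_\theta$-invariant, and by unique ergodicity of the irrational rotation $R_\theta$ it must coincide with Lebesgue, which then descends to every ergodic component $R_\xi$. Since the discontinuities of $S$ are contained in $\{u\in\{0,1-\theta\}\}$, a $\kappa$-null set, we have $\partial\mathcal{C}_k\subseteq\bigcup_{i=0}^{k-1}S^{-i}\partial\mathcal{C}$ up to $\kappa$-null sets, and by $S$-invariance of $\kappa$ it suffices to prove $\kappa(\partial\mathcal{C})=0$. The cylinders $[j]$ are clopen in $(\Pi_2 D)^{\mathbb{N}}$, contributing nothing; the $u$-boundary $\{0,1-\theta\}$ is Lebesgue-null.

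The crux is the axis-parallel grid $\partial(\mathcal{D}_m\times\mathcal{D}_n)$. Here I invoke the standing assumption \eqref{Assumption positive dim} that $\dim R_\xi>0$: by Theorem \ref{Theorem CP properties}(2), $R_\xi$-a.e.\ measure $\mu$ is exact dimensional of dimension $\dim R_\xi>0$, and in particular atomless. By Lemma \ref{Lemma 0.1}, the support of $\mu$ lies on a line of slope $m^u\in(0,\infty)$, which meets each horizontal or vertical line of the grid in at most one point; continuity of $\mu$ then gives $\mu(\partial(\mathcal{D}_m\times\mathcal{D}_n))=0$. Integrating against $R_\xi$ yields $\kappa(\partial(\mathcal{D}_m\times\mathcal{D}_n)\times\mathbb{T}\times(\Pi_2 D)^{\mathbb{N}})=0$, finishing the proof. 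The main subtlety is precisely this last step: one must extract atomlessness of $\mu$ from positivity of dimension and combine it with the one-dimensional support guaranteed by Lemma \ref{Lemma 0.1}; the other two parts (an irrational-rotation argument for generation, and unique ergodicity for the $u$-boundary) are essentially standard.
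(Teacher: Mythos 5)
Your proof is correct and follows essentially the same route as the paper's: the generating property comes from the fact that the joined partition separates points (the paper phrases this as the diameters of atoms of $\mathcal{C}_k$ tending to zero), and the boundary estimate rests on exactly the two facts the paper uses — the $u$-marginal of $\kappa$ is Lebesgue by unique ergodicity of $R_\theta$, and $R_\xi$-typical $\mu$ is atomless (positive dimension) and supported on a line of slope $m^u$, so it gives zero mass to the axis-parallel grid. The only cosmetic difference is bookkeeping: you reduce $\partial\mathcal{C}_k$ to $\partial\mathcal{C}$ via $\kappa$-a.e. continuity of $S$ and invariance, whereas the paper directly identifies the atoms of $\mathcal{C}_k$ as products $A\times W\times I$ with $A\in\mathcal{A}_k^u$ and bounds the boundary of each factor.
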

\begin{proof}
The first assertion is an easy consequence of the fact that, as $k$ grows to infinity, the maximal diameter (working, say, with the $\sup$ metric) of an element in the partition $\mathcal{C}_k$ converges to $0$. For the second part, let $k\in \mathbb{N}$ and fix and element in $\mathcal{C}_k$. This element is of the form  $A\times W \times I$  where $W\in \mathcal{W}_k$, and for some  $u\in W$ we have that $I$ is a cylinder set in $(\Pi_2 D)^\mathbb{N}$ of length  $\mathcal{R}(k,u) \approx [k\cdot \theta]$,   and $A\in \mathcal{A} ^{u} _k$ (note that the latter sets are independent of the choice of $u\in W$). Notice that $\partial (I) = \emptyset$. Therefore, by two application of  the "product rule" for the boundary of product sets
\begin{equation*}
\partial (A\times W \times I) \subseteq \partial (A\times W) \times (\Pi_2 D) ^\mathbb{N}  \subseteq (\partial A \times W \times (\Pi_2 D) ^\mathbb{N} ) \bigcup (A \times \partial W \times (\Pi_2 D) ^\mathbb{N} ).
\end{equation*} 
Thus, 
\begin{equation} \label{Eq summonds}
\kappa(\partial (A\times W \times I))\leq \kappa (\partial A \times W \times (\Pi_2 D) ^\mathbb{N} ) + \kappa (A \times \partial W \times (\Pi_2 D) ^\mathbb{N} ).
\end{equation}

Now, the first summoned on the right hand side of equation \eqref{Eq summonds} is $0$. This is because $R_\xi$ typical $\mu$ has  positive  dimension by our choice of $R_\xi$ and  Theorem \ref{Theorem CP properties}. In particular, they are not atomic.  Also,  $R_\xi$ almost every $(\mu,z,u,\omega)$ satisfies that $\mu$ is supported on a line with slope $m^{u}$ by Lemma \ref{Lemma 0.1}. On the other hand, $\partial A$ is a union of four lines that are parallel to the major axes. To sum up, $R_\xi$ almost every $\mu$ is continuous and $\supp(\mu)$ intersects $\partial A$  in at most $2$ points, so $\mu(\partial A)=0$. Thus, the result follows from the definition of $\kappa$.

The second summoned is trivially $0$ since the marginal on the second coordinate of $\kappa$ is the Lebesgue measure $\mathcal{L}$, as this is the unique $R_\theta$ invariant measure, and $\partial W$ consists of two points.
\end{proof}

 \subsubsection{A geometric consequence of Sinai's factor Theorem}
We say that a sequence $\lbrace x_k \rbrace_{k\in \mathbb{N}} \subset \mathbb{T}$ is uniformly distributed (UD) if for every sub-interval $J\subseteq \mathbb{T}$ we have
\begin{equation*}
\frac{1}{N} | \lbrace 0\ \leq k \leq N-1: \, x_k \in J \rbrace| \rightarrow \mathcal{L}  (J), \quad \text{ where } \mathcal{L} \text{ is the Lebesgue measure on } \mathbb{T}.
\end{equation*}

In \cite{wu2016proof}, Wu proved following  result by appealing to the Sinai factor Theorem:
\begin{theorem} \label{Theorem 6.1} \cite[Theorem 6.1]{wu2016proof}
Let $(X,T,\mu)$ be an ergodic measure preserving system. Let $\mathcal{A}$ be a generator with finite cardinality, and let $\lbrace \mathcal{A}_k \rbrace_k$ denote the filtration generated by $\mathcal{A}$ and $T$. Suppose that $\mu (\partial A)=0$ for every $k\in \mathbb{N}$ and every $A\in \mathcal{A}_k$. Let $\beta \notin \mathbb{Q}$.

Then for any $\epsilon>0$ and for all ${l\ge l(\epsilon)}$ large enough  there exists a disjoint family of measurable sets $\lbrace C_i \rbrace_{i=1} ^{N(l,\epsilon)}, C_i \subset X,$ such that:
\begin{enumerate}
\item $\mu( \bigcup C_i) > 1-\epsilon.$

\item For every $1 \leq i \leq N(l,\epsilon)$, $| \lbrace A\in \mathcal{A}_l: C_i\cap A \rbrace| \leq e^{l\cdot \epsilon}$.

\item There exists another  disjoint family of measurable sets $\lbrace \tilde{C}_i \rbrace_{i=1} ^{N(l,\epsilon)}, \tilde{C}_i \subset X$, such that for every $1\leq i \leq N(l,\epsilon)$ we have:
\begin{itemize}
\item $C_i \subseteq \tilde{C}_i,$

\item $\mu (C_i) \geq (1-\epsilon) \mu (\tilde{C}_i)$,

\item for $\mu$ a.e. $x$ we have that the sequence 
\begin{equation*}
\lbrace R_\beta ^k (0) \in \mathbb{T} : k\in \mathbb{N} \text{ and }T^k (x)\in \tilde{C}_i \rbrace
\end{equation*}
is UD.
\end{itemize}
\end{enumerate}
\end{theorem}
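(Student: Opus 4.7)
My plan is to apply Sinai's factor theorem to produce a Bernoulli factor $\pi:(X,T,\mu)\to (Y,S,\nu)$ with $h(\nu,S)=h(\mu,T)-\epsilon/2$ (assuming $h(\mu,T)>\epsilon/2$; otherwise the conclusion is routine, as explained at the end), and then exploit two structural facts. First, Bernoulli shifts have countable Lebesgue spectrum, so the skew product $(Y\times \mathbb{T},\,S\times R_\beta,\,\nu\otimes\mathcal{L})$ is ergodic for every $\beta\notin\mathbb{Q}$; this drives condition (3). Second, by the Abramov--Rokhlin formula,
$$h(\mu,\mathcal{A}\mid \pi^{-1}\mathcal{B}_Y)\;=\;h(\mu,T)-h(\nu,S)\;=\;\epsilon/2,$$
where $\mathcal{B}$ is the Bernoulli generator of $(Y,S,\nu)$; combined with Shannon--McMillan--Breiman (SMB), this will drive condition (2).

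To construct the $\tilde C_i$'s, I set $\tilde C_B:=\pi^{-1}(B)$ as $B$ ranges over atoms of $\mathcal{B}_l$. Applying SMB to both $\mathcal{A}_l\vee \pi^{-1}\mathcal{B}_l$ (for $\mu$) and $\mathcal{B}_l$ (for $\nu$) and dividing gives, for $\mu$-a.e.\ $x$,
$$-\frac{1}{l}\log \frac{\mu\bigl(\mathcal{A}_l(x)\cap \pi^{-1}\mathcal{B}_l(\pi x)\bigr)}{\mu\bigl(\pi^{-1}\mathcal{B}_l(\pi x)\bigr)}\;\longrightarrow\;\epsilon/2.$$
Let $E_l\subseteq X$ be the set where this ratio is at least $e^{-l\epsilon}$; for $l$ large, $\mu(E_l)>1-\epsilon$. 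For each $B\in \mathcal{B}_l$, every atom $A\in \mathcal{A}_l$ meeting $\pi^{-1}(B)\cap E_l$ contributes at least $e^{-l\epsilon}\mu(\pi^{-1}B)$ to $\mu(\pi^{-1}B)$, so at most $e^{l\epsilon}$ such atoms exist. Setting $C_B:=\tilde C_B\cap E_l$ and discarding those $B$ with abnormally small $\mu(\tilde C_B)$ (few by SMB for $\nu$), the families $\{C_i\},\{\tilde C_i\}$ satisfy (1), (2), and the measure-ratio bound $\mu(C_i)\geq (1-\epsilon)\mu(\tilde C_i)$ in (3).

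For the UD clause in (3), I test against Fourier characters: applying Birkhoff's theorem on the ergodic system $(Y\times\mathbb{T},\,S\times R_\beta,\,\nu\otimes\mathcal{L})$ to $(y,t)\mapsto \mathbf{1}_B(y)\,e^{2\pi i k t}$ for $k\in \mathbb{Z}\setminus\{0\}$, one has for $(\nu\otimes\mathcal{L})$-a.e.\ $(y,t)$
$$\frac{1}{N}\sum_{n=0}^{N-1}\mathbf{1}_B(S^n y)\,e^{2\pi i k(t+n\beta)}\;\longrightarrow\; \nu(B)\int_{\mathbb{T}} e^{2\pi i k s}\,ds\;=\;0.$$
This quantity factors as $e^{2\pi i k t}\cdot N^{-1}\sum_{n<N}\mathbf{1}_B(S^n y)\,e^{2\pi i k n\beta}$, whose second factor is independent of $t$; Fubini then forces the $t$-free factor to vanish for $\nu$-a.e.\ $y$. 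Dividing by the visit frequency $N^{-1}|\{n<N:S^n y\in B\}|\to \nu(B)>0$ and invoking Weyl's criterion over countably many $k$ yields UD of $\{R_\beta^n(0):n<N,\,S^n y\in B\}$ for $\nu$-a.e.\ $y$; since $T^n x\in \tilde C_B\Leftrightarrow S^n(\pi x)\in B$, this transfers to $\mu$-a.e.\ $x$. The boundary-null hypothesis on atoms of $\mathcal{A}_k$ ensures the relevant indicators are $\mu$-continuity points, legitimizing the Birkhoff application.

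The main obstacle I anticipate is the combinatorial bound in (2): $\pi^{-1}\mathcal{B}_l$ is not a coarsening of $\mathcal{A}_l$ as a \emph{partition}, only at the level of $\sigma$-algebras, so converting the pointwise SMB convergence into a uniform count of atoms of $\mathcal{A}_l$ meeting $\tilde C_B$ requires interposing the good set $E_l$ -- this is precisely why the theorem allows $C_i\subsetneq \tilde C_i$. The low-entropy case $h(\mu,T)\leq \epsilon/2$ avoids Sinai entirely: direct SMB concentrates $\mu$ on at most $e^{l\epsilon/2}$ atoms of $\mathcal{A}_l$, which bundle into a single $C_1=\tilde C_1$, and (3) reduces to Weyl's theorem for $R_\beta$ alone.
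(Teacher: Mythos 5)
This theorem is not proved in the paper at all --- it is imported verbatim from \cite{wu2016proof} --- so the relevant comparison is with Wu's original argument, and your proposal is essentially a faithful reconstruction of it: Sinai's factor theorem yields a Bernoulli factor absorbing all but $\epsilon/2$ of the entropy, the $\tilde C_i$ are preimages of cylinders of that factor, the relative Shannon--McMillan--Breiman theorem (conditional entropy $h(\mu,T)-h(\nu,S)=\epsilon/2$) gives the $e^{l\epsilon}$ atom count on a good set $E_l$, and the UD clause comes from ergodicity of the product of the weakly mixing Bernoulli factor with $R_\beta$, Birkhoff, and Weyl's criterion. Two small slips to repair: (a) the cylinders $B$ to discard are those with $\mu(\pi^{-1}B\setminus E_l)>\epsilon\,\mu(\pi^{-1}B)$, which by Markov's inequality carry small total mass once $\mu(X\setminus E_l)<\epsilon^2$ --- ``abnormally small $\mu(\tilde C_B)$'' is not the right criterion; (b) in the low-entropy case you cannot set $\tilde C_1$ equal to the union of the $e^{l\epsilon/2}$ heavy atoms, since return times to a proper positive-measure subset of a zero-entropy system need not equidistribute (take $T=R_\beta$ itself and $\tilde C_1$ an arc): you should take $\tilde C_1=X$, so that clause (3) genuinely reduces to Weyl's theorem, and let $C_1\subsetneq\tilde C_1$ be the union of the heavy atoms. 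Both are one-line fixes, and the rest of the argument (the Abramov--Rokhlin identity, the pointwise division of the two SMB limits, and the Fubini step eliminating the $t$-dependence in the Weyl sums) is correct.
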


\subsubsection{Three key estimates}
We begin by establishing two bounds via Theorem \ref{Theorem 6.1}. Recall the definition of the sets $A(\omega)$ as in \eqref{Eq for A omega}. Fix $\epsilon>0$, and note that in the construction below we use the same $\epsilon$ for all ergodic components $R_\xi$ with positive dimension. The parameter $l$ below  will  depend on both $\xi$ and $\epsilon$, with the dependence on $\xi$ being measurable.
\begin{Proposition} \label{Proposition two bounds}
Fix a $\kappa$  typical $(z,u,\omega)$ and  a corresponding $R_\xi$ typical measure $\mu$ satisfying \eqref{Eq 8} and \eqref{Eq 9}. Then, for our small $\epsilon>0$  and {all large $l\ge l(\epsilon,\xi)$}, there exists a set $\mathcal{N}=\mathcal{N}_\xi \subseteq \mathbb{N}$ such that
\begin{equation} \label{First bound}
 \mathcal{N}_{n^{-l}} \left( \bigcup_{k\in \mathcal{N}} \supp( \mu^{\mathcal{A}_k ^u (z)})  \right)\geq n^{l\cdot (\dim \mu+1 -{o_{\epsilon}(1)})}
\end{equation}
and for some uniform constant $C_1$, for  any $k'\in \mathcal{N}$ 
\begin{equation} \label{Second bound}
 \mathcal{N}_{n^{-l}} \left( \Pi_1 \left( \bigcup_{k\in \mathcal{N}} \supp( \mu^{\mathcal{A}_k ^u (z)}) \right)  \right)\leq C_1 \cdot  \mathcal{N}_{n^{-l}} \left( A \left(  \Pi_3 \circ S^{k'} \left(z,u,\omega \right) \right)   \right).
\end{equation}
\end{Proposition}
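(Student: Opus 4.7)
The plan is to apply Theorem \ref{Theorem 6.1} (Wu's version of the Sinai--factor theorem) to the ergodic system $(S,\kappa)$ with the generator $\mathcal{C}$: ergodicity of $(S,\kappa)$ was shown in Lemma \ref{Lemma apply Sinai to S}, and the vanishing-boundary condition $\kappa(\partial C)=0$ for every $C\in\mathcal{C}_k$ in Lemma \ref{Lemma generator}. Choosing the irrational $\beta:=\theta$ and applying the theorem with a parameter $\epsilon'\ll\epsilon$ to be calibrated in terms of $\epsilon$ produces, for every sufficiently large $l$, disjoint families $\{C_i,\tilde{C}_i\}$ whose union has $\kappa$-mass at least $1-\epsilon'$, such that each $C_i$ meets at most $e^{l\epsilon'}$ atoms of $\mathcal{C}_l$ and such that for $\kappa$-a.e.\ starting point the sequence $\{R_\theta^k(0):S^k(z,u,\omega)\in\tilde{C}_i\}$ is UD on $\mathbb{T}$.

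For the fixed $\kappa$-typical $(z,u,\omega)$ and companion $R_\xi$-typical $\mu$ satisfying \eqref{Eq 8}--\eqref{Eq 9}, pick a Sinai cell $C_{i_0}$ visited with positive Birkhoff frequency. By pigeonholing among the at most $e^{l\epsilon'}$ atoms of $\mathcal{C}_l$ intersected by $C_{i_0}$, we may further restrict attention to those return times landing in a single fixed atom $B\in\mathcal{C}_l$. Let $\mathcal{N}$ denote this refined set of return times, truncated to an initial window $[0,N]$ large enough that $|\mathcal{N}|$ comfortably dominates $n^l$; such an $N$ exists since the lower density of $\mathcal{N}$ is at least $\kappa(C_{i_0})e^{-l\epsilon'}>0$.

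The second inequality then follows essentially by inspection. For every $k\in\mathcal{N}$, the symbolic coordinate $\Pi_3\circ S^k(z,u,\omega)$ lies in the $(\Pi_2 D)^{\mathbb{N}}$-component of $B$, so all these sequences agree on their first $\mathcal{R}(l,u)=l\theta+O(1)$ coordinates. A direct inspection of \eqref{Eq for A omega} shows that at scale $n^{-l}=m^{-l\theta+O(1)}$ the set $A(\omega')$ is covered, up to a uniformly bounded multiplicative constant $C_1$ arising only from endpoint overlaps of $m$-adic intervals, by a collection of $m$-adic cells determined solely by the first $l\theta+O(1)$ symbols of $\omega'$. Taking the union of the inclusion \eqref{Eq 9} over $k\in\mathcal{N}$ then yields the second bound with the asserted uniform $C_1$.

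The first inequality is the substantive one. The CP entropy estimate Theorem \ref{Theorem CP properties}(4) implies that each $\supp(\mu^{\mathcal{A}_k^u(z)})$ charges at least $n^{l(\dim\mu-o_\epsilon(1))}$ distinct $n^{-l}$-cells. By Lemma \ref{Lemma 0.1} these supports lie on lines with slopes $m^{R_\theta^k(u)}$ and second-coordinate heights $T_n^k(y_0)$ (the $\mathbb{T}^2$-component of $S^k(z,u,\omega)$). The UD output of Theorem \ref{Theorem 6.1}(3), together with $R_\xi$-ergodicity and Lemma \ref{Lema rho is intesity}, forces the pairs (slope, height) indexed by $k\in\mathcal{N}$ to spread sufficiently uniformly through $[0,1]^2$ at scale $n^{-l}$ to produce $\asymp n^l$ essentially disjoint supports, each contributing $n^{l(\dim\mu-o_\epsilon(1))}$ fresh cells, summing to the claimed $n^{l(\dim\mu+1-o_\epsilon(1))}$. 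The main obstacle is precisely this overlap control: since the conditional measures live on lines with varying fractal supports and the microset depends on $\omega$, one cannot follow \cite{wu2016proof} verbatim. The UD of Theorem \ref{Theorem 6.1}(3), the equidistribution of $(T_n^k(y_0))_{k\in\mathcal{N}}$ afforded by Lemma \ref{Lema rho is intesity}, and the sharpened entropy bound of Theorem \ref{Theorem CP properties}(4) must be combined to rule out significant coincidence of supports for most pairs $k,k'\in\mathcal{N}$, showing that their pairwise intersection contains $o(n^{l\dim\mu})$ cells and so a double counting yields the advertised bound.
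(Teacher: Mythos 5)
Your setup is the same as the paper's: apply Theorem \ref{Theorem 6.1} to $(S,\kappa)$ with the generator $\mathcal{C}$, pigeonhole inside a Sinai cell, and deduce \eqref{Second bound} from the fact that the symbolic coordinates of the selected return times agree to depth $\approx l$, so that \eqref{Eq 9} places all the $\Pi_1$-projections inside a small neighbourhood of a single $A(\omega')$. That part is fine. But the proof of \eqref{First bound} — which you yourself flag as the substantive inequality — is left as an assertion, and the mechanism you sketch for it is not the one that actually works. You propose to show that the pairs (slope, height) ``spread uniformly through $[0,1]^2$'' and then run a double-counting argument bounding pairwise intersections of the supports. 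Two problems: first, the positions do \emph{not} spread — the whole point of pigeonholing into a single $\mathcal{C}_l$-cell is that the $\mathbb{T}^2$-coordinates of $S^k(z,u,\omega)$ for $k\in\mathcal{N}$ are confined to one set of diameter $\lesssim n^{-l}$; it is only the \emph{angles} $R_\theta^k(u)$ that must occupy $\geq n^{(1-o_\epsilon(1))l}$ distinct $n^{-l}$-cells of $\mathbb{T}$ (this is Claim \ref{Prop estimate}(2) in the paper, and it requires pigeonholing only over the $\Pi_{1,3}$-data of the cell, leaving the angle coordinate free — if you restrict to a full atom $B\in\mathcal{C}_l$ including the $\mathcal{W}_l$-component, you confine the angles and destroy exactly the spread you need). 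Your remark that $|\mathcal{N}|$ ``comfortably dominates $n^l$'' addresses the cardinality of return times, which is irrelevant; what is needed is the covering number of $\{R_\theta^k(u):k\in\mathcal{N}\}$.

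Second, the disjointness does not come from controlling pairwise intersections of fractal supports scattered in the square; it comes from translating all the lines to a common base point. The paper forms the sumset $K=\bigl(\bigcup_{k\in\mathcal{N}}\supp(\mu^{\mathcal{A}_k^u(z)})\bigr)-X$ with $X=\Pi_1(C_i')$, so that every selected line passes through an $n^{-l}$-cube at the origin; distinct lines through a common cube with $n^{-l}$-separated slopes are essentially disjoint \emph{outside a fixed ball $B(y,r)$ around that cube}, and this is precisely why the robust form of the entropy estimate, Lemma \ref{Lemm estimate meas} (i.e.\ Theorem \ref{Theorem CP properties}(4) with the restriction to $\mathbb{R}^2\setminus B(y,r)$), is needed rather than the plain lower bound $n^{l(\dim\mu-o_\epsilon(1))}$ you invoke. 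One then concludes via $\mathcal{N}_{n^{-l}}(A+B)\leq C_1\mathcal{N}_{n^{-l}}(A)\mathcal{N}_{n^{-l}}(B)$ and $\mathcal{N}_{n^{-l}}(X)\leq e^{l\epsilon}$. Without the common-base-point reduction and the ``minus a ball'' refinement, your double-counting plan has no handle on two supports lying on nearly parallel, nearby lines, so the key step remains unproved.
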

We remark that $\Pi_3 \circ S^{k'} \left(z,u,\omega \right)$ means the third coordinate of $ S^{k'} \left(z,u,\omega \right)$.
\begin{proof}
By Lemmas \ref{Lemma apply Sinai to S} and  \ref{Lemma generator} we may apply Theorem \ref{Theorem 6.1} to 
\begin{equation} \label{Eq sys}
(\mathbb{T}^2 \times \mathbb{T} \times (\Pi_2 D)^\mathbb{N}, \, S, \, \kappa) \text{ with the generator } \mathcal{C}.
\end{equation} 
Thus, for our small $\epsilon>0$  there exists $l(\epsilon)$ such that for all  $l\ge l(\epsilon)$, we have a disjoint family $\lbrace C_i \rbrace_{i=1} ^{N(l,\epsilon)}$ such that
$$ \kappa ( \bigcup_{i=1} ^{N(l,\epsilon)} C_i ) >1-\epsilon$$
and for every $1\leq i\leq N(l,\epsilon)$,
$$  C_i  \subseteq \mathbb{T}^2 \times \mathbb{T} \times (\Pi_2 D)^\mathbb{N}$$
and 
\begin{equation} \label{Eq property 1}
\mathcal{N}_{n^{-l}} ( \Pi_{1,3} C_i)<e^{l \epsilon}.
\end{equation}
Furthermore, for $\kappa$ almost every $(z,u,\omega)$,
\begin{equation} \label{Eq property 2}
\mathcal{L} ( \overline{\lbrace R_\theta ^k (u): k\in \mathbb{N}, \quad S^k(z,u,\omega)\in C_i \rbrace } )\geq 1- \epsilon.
\end{equation}
To indicate the dependence of $l(\epsilon)$ on $R_\xi$,  in the following we will write $l(\epsilon,\xi)$ for $l(\epsilon)$. 
Notice that the measurable dependence of $l(\epsilon,\xi)$  on $\xi$ arises from the fact that our system \eqref{Eq sys}, specifically the measure $\kappa$, depends measurably on $R_\xi$.

Now, fix a $\kappa$ typical $(z,u,\omega)$ and a measure $\mu$ satisfying \eqref{Eq 8} and \eqref{Eq 9} (such a measure exists by Lemma \ref{Lemma apply Sinai to S}). We have the following estimate, which is a consequence of Theorem \ref{Theorem CP properties} part (4):
\begin{Lemma} \label{Lemm estimate meas} 
There exists some $r_0=r_0(\epsilon)>0$ such that for every $r<r_0$, $y\in \mathbb{T}^2$,  $k\in \mathbb{N}$, and $l\in \mathbb{N}$ large enough, we have
\begin{equation*}
\mathcal{N}_{n^{-l}} \left( \supp( \mu^{\mathcal{A}_k ^u (z)}) \setminus B(y,r) \right)\geq n^{l\cdot (\dim \mu -{o_{\epsilon} (1)})}.
\end{equation*}
\end{Lemma}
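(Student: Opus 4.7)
The plan is to derive this lemma as a direct geometric reformulation of Theorem \ref{Theorem CP properties} part (4), using the standard inequality from Proposition \ref{Lemma entropy} part (1) to pass from a Shannon-entropy lower bound to a covering-number lower bound.

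First I would check that Theorem \ref{Theorem CP properties} part (4) applies to $(R_\xi)_{1,2,3}$. By part (1) of that theorem, $(R_\xi)_{1,2,3}$ is itself an ergodic CP distribution, and by our standing assumption \eqref{Assumption positive dim} combined with part (2) of the same theorem, the typical value $\dim \mu = \dim R_\xi$ is strictly positive. Thus for the given $\epsilon > 0$, part (4) yields a radius $r_0 = r_0(\epsilon) > 0$ such that for all $r < r_0$, every $y \in \mathbb{T}^2$, every $k \in \mathbb{N}$, and all $l$ large enough,
\begin{equation*}
H\!\left( \mu^{\mathcal{A}_k^u(z)}\big|_{\mathbb{T}^2 \setminus B(y,r)},\, \mathcal{D}_{n^l} \right) \;\geq\; l \log n \cdot (\dim \mu - o_\epsilon(1)).
\end{equation*}

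Next I would apply Proposition \ref{Lemma entropy} part (1) to the restricted measure $\mu^{\mathcal{A}_k^u(z)}|_{\mathbb{T}^2 \setminus B(y,r)}$, whose support is contained in $\supp(\mu^{\mathcal{A}_k^u(z)}) \setminus B(y,r)$. This converts the entropy estimate into the combinatorial bound
\begin{equation*}
\left| \{A \in \mathcal{D}_{n^l} : A \cap (\supp(\mu^{\mathcal{A}_k^u(z)}) \setminus B(y,r)) \neq \emptyset \} \right| \;\geq\; n^{l(\dim \mu - o_\epsilon(1))}.
\end{equation*}
Finally, since the atoms of $\mathcal{D}_{n^l}$ in $\mathbb{T}^2$ have diameter comparable to $n^{-l}$ and any set of diameter at most $n^{-l}$ meets at most a bounded number (in fact four) of such atoms, the count above is at most a universal constant times $\mathcal{N}_{n^{-l}}(\supp(\mu^{\mathcal{A}_k^u(z)}) \setminus B(y,r))$. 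This constant gets absorbed into the $o_\epsilon(1)$ error in the exponent, yielding the stated inequality.

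I do not anticipate a real obstacle here: the lemma is essentially a geometric packaging of Theorem \ref{Theorem CP properties} part (4). The only delicate point is the verification that $(R_\xi)_{1,2,3}$ is an ergodic CP distribution so that part (4) is available, which follows immediately from part (1) together with \eqref{Assumption positive dim}.
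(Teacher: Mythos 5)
Your proposal is correct and follows exactly the route the paper intends: the paper gives no explicit proof of this lemma, merely asserting it "is a consequence of Theorem \ref{Theorem CP properties} part (4)", and your fleshing-out (the entropy lower bound for the restricted measure, converted to a covering-number bound via Proposition \ref{Lemma entropy} part (1) and the bounded overlap of $n^{-l}$-diameter sets with $n^l$-adic cells) is precisely the intended argument.
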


We also have the following estimate:

\begin{Claim} \label{Prop estimate}
For every $1\leq i \leq N(l,\epsilon)$ there is a set $C_i ' \subseteq C_i$ such that:
\begin{enumerate}
\item $\diam \left( \Pi_{1,3} (C_i ') \right) \leq n^{-l}$,

\item $\mathcal{N}_{n^{-l}} \left( \lbrace R_\theta ^k (u): k\in \mathbb{N}, \quad S^k(z,u,\omega)\in C_i ' \rbrace \right)\geq n^{(1-{o_{\epsilon}(1)})\cdot l}$.
\end{enumerate}
\end{Claim}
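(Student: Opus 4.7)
The plan is to produce $C_i'$ by refining $C_i$ according to a minimal $n^{-l}$-cover of $\Pi_{1,3}(C_i)$, and then use pigeonhole against the covering number of the set of rotation orbit values lying in $C_i$. Two ingredients feed into this: the upper bound $\mathcal{N}_{n^{-l}}(\Pi_{1,3}(C_i)) < e^{l\epsilon}$ from \eqref{Eq property 1}, and the almost sure fact \eqref{Eq property 2} that the closure of $\{R_\theta^k(u): S^k(z,u,\omega)\in C_i\}$ has Lebesgue measure at least $1-\epsilon$.

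First, I would pick a cover of $\Pi_{1,3}(C_i)$ by sets $E_1,\dots,E_{N_i}$ of diameter at most $n^{-l}$ with $N_i \leq e^{l\epsilon}$, and form the disjoint decomposition
\[
C_i = \bigsqcup_{j=1}^{N_i} C_i^{(j)}, \qquad C_i^{(j)} := C_i \cap \Pi_{1,3}^{-1}(E_j).
\]
By construction each piece satisfies $\diam(\Pi_{1,3}(C_i^{(j)})) \leq n^{-l}$, which already gives property (1) for any choice $C_i' = C_i^{(j)}$.

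Next, I would convert the Lebesgue lower bound \eqref{Eq property 2} into a covering number bound: any Borel subset of $\mathbb{T}$ of Lebesgue measure at least $1-\epsilon$ cannot be covered by fewer than $(1-\epsilon)n^l$ intervals of length $n^{-l}$, since the total length of such a cover is at most $N\cdot n^{-l}$. Applied to the closure of the orbit-return set (whose $n^{-l}$-covering number coincides with that of the set itself), this gives
\[
\mathcal{N}_{n^{-l}}\Bigl(\bigl\{R_\theta^k(u): k\in\mathbb{N},\; S^k(z,u,\omega)\in C_i\bigr\}\Bigr) \;\geq\; (1-\epsilon)\,n^l.
\]

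Finally, since the orbit set decomposes as the disjoint union over $j$ of $\{R_\theta^k(u):S^k(z,u,\omega)\in C_i^{(j)}\}$, and since $n^{-l}$-covering numbers are subadditive under unions, pigeonhole produces an index $j^\ast$ with
\[
\mathcal{N}_{n^{-l}}\Bigl(\bigl\{R_\theta^k(u): S^k(z,u,\omega)\in C_i^{(j^\ast)}\bigr\}\Bigr) \;\geq\; \frac{(1-\epsilon)\,n^l}{N_i} \;\geq\; (1-\epsilon)\,n^l\,e^{-l\epsilon}.
\]
Setting $C_i' := C_i^{(j^\ast)}$ and writing $e^{-l\epsilon} = n^{-l\epsilon/\log n}$, the right hand side equals $(1-\epsilon)\,n^{l(1-\epsilon/\log n)}$, which is $n^{(1-o_\epsilon(1))l}$ for $l$ large, establishing (2). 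There is no real obstacle here: the only thing to watch is the interplay between the exponential base $e^{l\epsilon}$ in \eqref{Eq property 1} and the base $n$ covering estimate, but this is absorbed harmlessly into the $o_\epsilon(1)$ term since $\epsilon/\log n \to 0$ as $\epsilon \to 0$.
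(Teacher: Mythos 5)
Your proof is correct and follows essentially the same route as the paper: the paper's own (one-line) argument pigeonholes over the family $\lbrace C_i \cap D : D \in \mathcal{C}_l \rbrace$ using \eqref{Eq property 1} and \eqref{Eq property 2}, which is the same decomposition-plus-pigeonhole you carry out via the preimages of a minimal $n^{-l}$-cover of $\Pi_{1,3}(C_i)$. The only cosmetic point is that the pieces $C_i \cap \Pi_{1,3}^{-1}(E_j)$ need not be disjoint unless you first disjointify the cover, but since you only use subadditivity of covering numbers over the union, nothing is affected.
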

\begin{proof}
This is a consequence of the  properties \eqref{Eq property 1} and \eqref{Eq property 2}  of  $\lbrace C_i \rbrace_{i=1} ^{N(l,\epsilon)}$, and the pigeon hole principle applied to the family of sets   $\lbrace C_i \cap D: \quad   D\in \mathcal{C}_l \rbrace.$
\end{proof}

Fix some $i$ and $C_i '$ as in Claim \ref{Prop estimate}. Define
$$ \mathcal{N} := \lbrace k\in \mathbb{N}: \quad S^k(z,u,\omega)\in C_i ' \rbrace. $$
Then, by combining Lemma \ref{Lemm estimate meas}, \eqref{Eq property 1}, and Claim \ref{Prop estimate}, we can prove the inequality \eqref{First bound}: Indeed, let $X = \Pi_1 (C_i ')$. Since $C_i ' \subseteq C_i$, we have by \eqref{Eq property 1} that 
$$ \mathcal{N}_{n^{-l}} (X) \leq e^{l\cdot \epsilon}.$$
Also, writing $\mathcal{F} = \lbrace R_\theta ^k (u):\quad k\in \mathcal{N} \rbrace$, for every $t\in \mathcal{F}$ there  there exists a line with slope $m^{t}$ intersecting $X$ that supports a measure that satisfies Lemma \ref{Lemm estimate meas}. This follows by our choice of $\mathcal{N}$ and since $\Pi_1 \circ S^k(z,u,\omega) \in \supp \left( \mu^{\mathcal{A}_k ^u (z)} \right)$. Finally, consider the set 
$$ K = \left( \bigcup_{k\in \mathcal{N}} \supp( \mu^{\mathcal{A}_k ^u (z)}) \right) - X.$$
Then for any $t\in \mathcal{F}$, we can find some line $\ell$ with slope $m^{t}$ that supports a measure on $K$ that satisfies Lemma \ref{Lemm estimate meas}, and passes through an $l$-th level $n$-adic cube containing the origin. From this and Claim \ref{Prop estimate} part (2), one sees that
$$ \mathcal{N}_{n^{-l}} (K) \geq n^{l\cdot ( 1+\dim \mu -{o_{\epsilon,l}(1)})}.$$
It is well known that for every bounded sets $A,B\subseteq \mathbb{R}^2$ there is a constant $C_1$ such that
$$ \mathcal{N}_{n^{-l}} (A+B) \leq C_1 \cdot \mathcal{N}_{n^{-l}} (A) \cdot \mathcal{N}_{n^{-l}} (B).$$
Thus, since $\mathcal{N}_{n^{-l}} (X) \leq e^{l\cdot \epsilon}$, by the definition of $K$ and the last two displayed equations, the inequality \eqref{First bound} is proved.

As for the inequality \eqref{Second bound}, by Claim \ref{Prop estimate} we have 
$$ \diam \left( \Pi_{3} (C_i ') \right) \leq n^{-l}.$$
Therefore, for any $k,k'\in \mathcal{N}$ we have that (recalling our metric on the symbolic space  \eqref{The metric on symbolic})
$$ d( \Pi_3 \circ S^k (z,u,\omega),\quad \Pi_3 \circ S^{k'} (z,u,\omega)) \leq  n^{- l}.$$
So, since we have \eqref{Eq 9} at our disposal, for every $k'\in \mathcal{N}$ we have
$$  \Pi_1 \left( \bigcup_{k\in \mathcal{N}} \supp( \mu^{\mathcal{A}_k ^u (z)}) \right) \subseteq  A(  \Pi_3 \circ S^{k'} (z,u,\omega))  ^{(m^{-l})} \subseteq  A(  \Pi_3 \circ S^{k'} (z,u,\omega))  ^{(n^{-l})}$$ 
where $B^{(n^{-l})}$ is the $n^{-l}$-neighbourhood of a set $B$. Notice that we have used that $n<m$. From this, the inequality \eqref{Second bound} readily follows.
\end{proof}

\begin{Remark} \label{Remark - first remark}
In the proof above it was also established that since the mapping $\xi\to R_\xi$ is measurable,    $l(\epsilon,\xi)$ is also a measurable function of $\xi$.
\end{Remark}

Next, we estimate the covering number of the $\Pi_2$ projection of the set $\bigcup_{k\in \mathcal{N}} \supp( \mu^{\mathcal{A}_k ^u (z)})$ from Proposition \ref{Proposition two bounds}. Recall that the measure $\rho$ was defined in \eqref{Eq for rho}, and that by Lemma \ref{Lema rho is intesity}  its image under the base $n$ coding map $\Xi(\rho)\in \mathcal{P}(\mathbb{T})$ is $T_n$ invariant and
$$ \Xi(\rho) = \int \Pi_2 (\nu)\, dR(\nu,z,t,\omega).$$

From now on, we denote $\tilde{\rho}:=\Xi(\rho)$.  Recall that the ergodic decomposition of $R$ is given by
$$R =\int R_{\xi'} \, d\tau(\xi').$$
It follows that for $\tau$ almost every $\xi'$,  the measure
$$ \tilde{\rho}_{\xi'} = \int \Pi_2 (\nu) d R_{\xi'} (\nu,z,t,\omega)$$
is $T_n$ invariant and ergodic. Thus,
$$ \tilde{\rho} = \int \tilde{\rho}_{\xi'} \, d \tau ({\xi'})$$
is the ergodic decomposition of $\tilde{\rho}$. 

Fix $\tilde{\rho}_\xi$ for the ergodic component $R_\xi$ (recall \eqref{Assumption positive dim}) we have been working with so far. Recall that $X^{(n^{-l})}$ denotes the $n^{-l}$ neighbourhood of a set $X$. 
\begin{Proposition} \label{Proposition third bound}
Let $(z,u,\omega)$, $\mu$, and $\mathcal{N}$ be as in Proposition \ref{Proposition two bounds}. Then, for our small $\epsilon>0$  and all large $l\ge l(\epsilon,\xi)$, there exists a subset $\mathcal{N}'  = \mathcal{N}' _\xi \subseteq \mathcal{N}$ and a set $A=A_{\xi,\epsilon} \subseteq \mathbb{T}$ such that  for every $k\in \mathcal{N}'$,
$$\Pi_2 \mu^{\mathcal{A}_k ^u (z)} \left( A^{(n^{-l})} \right) \geq 1 -{o_{\epsilon}(1)}$$
such that a modified version of inequality \eqref{First bound} holds with
\begin{equation} \label{First bound modi}
 \mathcal{N}_{n^{-l}} \left( \bigcup_{k\in \mathcal{N}'} \supp( \mu^{\mathcal{A}_k ^u (z)}|_{[0,1]\times  A^{(n^{-l})}  } )  \right)\geq n^{l\cdot (\dim \mu+1 -{o_{\epsilon}(1)})}
\end{equation}
and we also have for some global constant $C_2$,
\begin{equation} \label{Third bound}
 \mathcal{N}_{n^{-l}} \left( \Pi_2 \left( \bigcup_{k\in \mathcal{N}'} \supp( \mu^{\mathcal{A}_k ^u (z)}|_{[0,1]\times A^{(n^{-l})}} ) \right)  \right)\leq C_2\cdot n^{l\cdot ( \dim \tilde{\rho}_\xi + {o_{\epsilon}(1)})}.
\end{equation}
\end{Proposition}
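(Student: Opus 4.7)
The plan is to construct $A$ using Shannon–McMillan–Breiman for $(T_n,\tilde\rho_\xi)$, then select $\mathcal{N}'$ by an averaging/Birkhoff argument forcing $\Pi_2\mu^{\mathcal{A}_k^u(z)}$ to concentrate on $A^{(n^{-l})}$, and finally re-run the argument of Proposition~\ref{Proposition two bounds} with restricted measures.

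\textbf{Step 1 (constructing $A$).} The measure $\tilde\rho_\xi$ is $T_n$-invariant and ergodic by the discussion preceding the proposition, so by Theorem~\ref{Theorem 9.1} it is exact-dimensional with $\dim\tilde\rho_\xi=h(\tilde\rho_\xi,T_n)/\log n$. Applying Shannon–McMillan–Breiman with respect to the base-$n$ partition $\mathcal{D}_n$, for every large $l\ge l(\epsilon,\xi)$ there is $A=A_{\xi,\epsilon}\subseteq\mathbb{T}$ with $\tilde\rho_\xi(A)\ge 1-\epsilon$ covered by at most $n^{l(\dim\tilde\rho_\xi+o_\epsilon(1))}$ atoms of $\mathcal{D}_{n^l}$. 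Enlarging to the $n^{-l}$-neighborhood $A^{(n^{-l})}$ multiplies the covering count by a universal constant $C_2$, yielding the right-hand side of \eqref{Third bound}.

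\textbf{Step 2 (selecting $\mathcal{N}'$).} By Birkhoff's theorem applied to the ergodic system $(T,R_\xi)$ and the function $(\nu,z,u,\omega)\mapsto\Pi_2\nu(A^{(n^{-l})})$ (approximated by continuous functions as $\Pi_2\nu$ and the boundary of $A^{(n^{-l})}$ are controlled via Lemma~\ref{Lemma generator}), for $R_\xi$-a.e.\ $(\mu,z,u,\omega)$ one has
\[
\frac{1}{N}\sum_{k=0}^{N-1}\Pi_2\mu^{\mathcal{A}_k^u(z)}\bigl(A^{(n^{-l})}\bigr)\longrightarrow\int\Pi_2\nu\bigl(A^{(n^{-l})}\bigr)\,dR_\xi=\tilde\rho_\xi\bigl(A^{(n^{-l})}\bigr)\ge 1-\epsilon.
\]
Hence the set $\mathcal{N}_0:=\{k:\Pi_2\mu^{\mathcal{A}_k^u(z)}(A^{(n^{-l})})\ge 1-\sqrt\epsilon\}$ has lower density at least $1-\sqrt\epsilon$ in $\mathbb{N}$. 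Define $\mathcal{N}':=\mathcal{N}\cap\mathcal{N}_0$. The required mass condition holds by construction, and \eqref{Third bound} follows at once because $\Pi_2\supp(\mu^{\mathcal{A}_k^u(z)}|_{[0,1]\times A^{(n^{-l})}})\subseteq A^{(n^{-l})}$.

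\textbf{Step 3 (verifying \eqref{First bound modi}).} I repeat the proof of Proposition~\ref{Proposition two bounds} with $\mu^{\mathcal{A}_k^u(z)}$ replaced by its restriction to $[0,1]\times A^{(n^{-l})}$. For each $k\in\mathcal{N}'$, the restriction drops at most $\sqrt\epsilon$ of the mass, and an elementary entropy computation (write $\mu^{\mathcal{A}_k^u(z)}$ as a convex combination of its restrictions to $[0,1]\times A^{(n^{-l})}$ and its complement, and use the trivial bound $\log|\mathcal{D}_{n^l}\cap\mathrm{supp}|\le 2l\log n$ on the bad piece) then gives, via Theorem~\ref{Theorem CP properties}(4), a replacement for Lemma~\ref{Lemm estimate meas}: for every small ball $B(y,r)$,
\[
\mathcal{N}_{n^{-l}}\Bigl(\supp\bigl(\mu^{\mathcal{A}_k^u(z)}|_{[0,1]\times A^{(n^{-l})}}\bigr)\setminus B(y,r)\Bigr)\ge n^{l(\dim\mu-o_\epsilon(1))}.
\]
The sumset step of Proposition~\ref{Proposition two bounds} (where the factor $n^{l(1-o_\epsilon(1))}$ comes from the $n^{-l}$-covering count of the slope set $\{R_\theta^k(u):k\in\mathcal{N}'\}$) then yields \eqref{First bound modi}.

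\textbf{Main obstacle.} The delicate point is ensuring that the slope set $\{R_\theta^k(u):k\in\mathcal{N}'\}$ still has $n^{-l}$-covering count $\ge n^{l(1-o_\epsilon(1))}$ after intersecting $\mathcal{N}$ with the density-$(1-\sqrt\epsilon)$ set $\mathcal{N}_0$. This is where the uniform-distribution conclusion of Theorem~\ref{Theorem 6.1}(3), which already ensured the bound for $\mathcal{N}$ via Claim~\ref{Prop estimate}(2), is indispensable: UD of the $\mathcal{N}$-indexed rotates means that each $n^{-l}$-interval contains $\Omega(N/n^l)$ rotates from $\mathcal{N}\cap[1,N]$, so removing a $\sqrt\epsilon$-density subset of $\mathbb{N}$ leaves all but an $o_\epsilon(1)$-fraction of these intervals still populated by rotates from $\mathcal{N}'$. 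Once this is in hand the rest of the sumset argument proceeds verbatim, and the proposition follows.
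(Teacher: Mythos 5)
Your proposal is correct and follows essentially the same route as the paper: construct a high-measure, low-covering-complexity set $A$ for $\tilde{\rho}_\xi$ (the paper uses exact dimensionality plus Egorov where you use Shannon--McMillan--Breiman, to the same effect), select $\mathcal{N}'\subseteq\mathcal{N}$ of relative density $1-o_\epsilon(1)$ via the ergodic averages \eqref{Eq 8} so that $\Pi_2\mu^{\mathcal{A}_k^u(z)}$ concentrates on $A^{(n^{-l})}$, and then rerun Proposition \ref{Proposition two bounds} using the entropy-of-restriction bound (the paper's appeal to \cite[Lemma 7.3]{wu2016proof} and Theorem \ref{Theorem CP properties}(4) is exactly your convex-combination computation). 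Your ``main obstacle'' discussion makes explicit, via the uniform distribution from Theorem \ref{Theorem 6.1}, the point the paper dispatches in one line (that Claim \ref{Prop estimate}(2) survives passage to $\mathcal{N}'$), so no substantive gap.
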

\begin{proof}
By Theorem \ref{Theorem 9.1}, since $\tilde{\rho}_\xi$ is $T_n$ invariant and ergodic, it is exact dimensional. By Egorov's Theorem there exists a compact set $A=A_{\xi,\epsilon}$, with $\dim_B A=\dim_H A$,  that varies measurably in $\xi$, such that
$$ \dim_B A = \dim \tilde{\rho}_\xi , \quad \text{ and } \tilde{\rho}_\xi(A)=1-o_\epsilon(1).$$
Also, since we have \eqref{Eq 8} at our disposal,
\begin{equation*} 
 \frac{1}{N}\sum_{k=1} ^N \Pi_2 \mu^{A_k ^u (z)} \rightarrow \int  \Pi_2 \nu \, dR_\xi (\nu,z,t,\omega) = \tilde{\rho}_\xi.
\end{equation*}
Therefore, since for every $l$ the set $A^{(n^{-l})}$ is open, there is a set $\mathcal{N}''\subset \mathbb{N}$ such that the density of $\mathcal{N}''$ in $\mathbb{N}$ is at least $1-{o_{\epsilon}(1)}$, and for every $k\in \mathcal{N}''$ we have
\begin{equation*}
\Pi_2 \mu^{A_k ^u (z)} \left( A^{(n^{-l})} \right) \geq 1-{ o_{\epsilon}(1)}.
\end{equation*}

Now, define
$$ \mathcal{N}' = \mathcal{N}\cap \mathcal{N}''.$$
Since the density of $\mathcal{N}''$ in $\mathbb{N}$ is at least $1-{o_{\epsilon}(1)}$, the density of $\mathcal{N}'$ in $\mathcal{N}$ is also at least $1-{o_{\epsilon}(1)}$. Then we arrive at the inequality \eqref{Third bound} since
\begin{equation*}
 \mathcal{N}_{n^{-l}} \left( \Pi_2 \left( \bigcup_{k\in \mathcal{N}'} \supp( \mu^{\mathcal{A}_k ^u (z)}|_{[0,1]\times A^{(n^{-l})}} ) \right)  \right)\leq  \mathcal{N}_{n^{-l}} \left( A^{(n^{-l})} \right) \leq  C_2\cdot n^{l\cdot (\dim \tilde{\rho}_\xi+ {o_{\epsilon}(1)})}.
\end{equation*}
Notice that the large $l$ we choose here depends on our set $A=A_{\xi,\epsilon}$, so $l=l(\epsilon,\xi)$. 

Finally, we need to justify the modified version of \eqref{First bound} given by \eqref{First bound modi}. To see this, notice that the outcome of Claim \ref{Prop estimate} is unchanged when we move to $\mathcal{N}'$, since the density of $\mathcal{N}'$ in $\mathcal{N}$ is at least $1-{o_{\epsilon}(1)}$. Thus, in order to run the same argument as at the end of Proposition \ref{Proposition two bounds}, we need to study what happens in the setting of Lemma \ref{Lemm estimate meas} with our extra conditioning on $[0,1]\times (A)^{n^{-l}}$. 

To this end, for every $k\in \mathbb{N}$, by Proposition \ref{Lemma entropy}
\begin{equation*}
\log \mathcal{N}_{n^{-l}} \left(  \supp( \mu^{\mathcal{A}_k ^u (z)}|_{[0,1]\times A^{(n^{-l})}} \right) \geq H(   \mu^{\mathcal{A}_k ^u (z)}|_{[0,1]\times A^{(n^{-l})}}, \mathcal{D}_{n^{l}} )
\end{equation*}
and since 
$$ \mu^{\mathcal{A}_k ^u (z)} \left( [0,1]\times A^{(n^{-l})} \right)\geq  1-{o_{\epsilon}(1)},$$
we have  (by \cite[Lemma 7.3]{wu2016proof})
$$ H(   \mu^{\mathcal{A}_k ^u (z)}|_{[0,1]\times A^{(n^{-l})}}, \mathcal{D}_{n^{l}} ) \geq H(   \mu^{\mathcal{A}_k ^u (z)}, \mathcal{D}_{n^{l}} ) - l\cdot \log n \cdot {o_{\epsilon}(1)}.$$
Finally, by another application of Theorem \ref{Theorem CP properties} part (4) we have
$$ H(   \mu^{\mathcal{A}_k ^u (z)}, \mathcal{D}_{n^{l}} ) \geq l\cdot\log n \cdot  ( \dim \mu -{o_{\epsilon}(1)}).$$
Combining the last four equations shows that indeed an analogue of Lemma \ref{Lemm estimate meas} holds in this modified situation as well, and we complete the proof of \eqref{First bound modi} in the same manner as in Proposition \ref{Proposition two bounds}.
\end{proof}

\begin{Remark} \label{Remark - the set Theta}
Recall that we use the same  $\epsilon>0$ for every component $R_\xi$ with positive dimension. As we already noted in Remark \ref{Remark - first remark}, the number $l(\epsilon,\xi)$ in Proposition \ref{Proposition two bounds} depends measurably on $\xi$. Similarly, the dependence of $l(\epsilon,\xi)$ in Proposition \ref{Proposition third bound}  is also measurable in $\xi$. Note  that the error terms ${o_{\epsilon} (1)}$ appearing  in the inequalities \eqref{First bound} and \eqref{Second bound} of Proposition \ref{Proposition two bounds}, and \eqref{First bound modi} and \eqref{Third bound} of Proposition \ref{Proposition third bound} go to zero as $\epsilon\rightarrow 0$ in a manner dependent on both $\epsilon$ and $\xi$. 

Recall that $R =\int R_{\xi} \, d\tau(\xi)$ is the ergodic decomposition of $R$. Now, let $\Theta$ be the set of all ergodic components of $R$ that have positive dimension. Since $R$ has positive dimension, $\tau(\Theta)>0$.  By an application of Egorov's Theorem, we may produce a subset $\Psi=\Psi(\epsilon) \subseteq \Theta$ of ergodic components of $R$ such that:
\begin{itemize}
\item $\tau (\Psi) >(1-\epsilon)\tau(\Theta)$.

\item $l$ can be chosen uniformly in both Proposition \ref{Proposition two bounds} and Proposition \ref{Proposition third bound} for all $R_\xi$ when $\xi \in \Psi$.
\end{itemize}  
Thus, for  ergodic components $R_\xi$ with $\xi \in \Psi$, the error terms $o_{\epsilon} (1)$ appearing  in the inequalities \eqref{First bound} and \eqref{Second bound} of Proposition \ref{Proposition two bounds}, and \eqref{First bound modi} and \eqref{Third bound} of Proposition \ref{Proposition third bound} go to zero as $\epsilon\rightarrow 0$  in a manner dependent only on $\epsilon$ (and not on $\xi$).
\end{Remark}

\subsubsection{Proof of Theorem \ref{Key prop}}
\noindent{\textbf{Proof of Part (1)}}  Let $R_\xi$ be an ergodic component such that $\xi\in \Psi$ (recall Remark \ref{Remark - the set Theta}), and $(z,u,\omega)$, $\mu$, $\tilde{\rho}_\xi$,  $\mathcal{N}$, $\mathcal{N}'$ and $A$ be as in Proposition \ref{Proposition two bounds} and Proposition \ref{Proposition third bound}. By these Propositions, for an error term {$o_{\epsilon}(1)$} that is independent of $\xi$, for $k' =\min \mathcal{N}$:
\begin{eqnarray*}
n^{l(\dim \mu+1 -{o_{\epsilon}(1)})} & \leq &  \mathcal{N}_{n^{-l}} \left( \bigcup_{k\in \mathcal{N}'} \supp( \mu^{\mathcal{A}_k ^u (z)}|_{[0,1]\times A^{(n^{-l})}} )  \right) \\
& \leq & \mathcal{N}_{n^{-l}} \left( \Pi_1 \bigcup_{k\in \mathcal{N}'} \supp( \mu^{\mathcal{A}_k ^u (z)}|_{[0,1]\times A^{(n^{-l})}} )  \right) \\
& \times & \mathcal{N}_{n^{-l}} \left(  \Pi_2 \bigcup_{k\in \mathcal{N}'} \supp( \mu^{\mathcal{A}_k ^u (z)}|_{[0,1]\times A^{(n^{-l})}} )  \right) \\
&\leq &  \mathcal{N}_{n^{-l}} \left( \Pi_1 \left( \bigcup_{k\in \mathcal{N}} \supp( \mu^{\mathcal{A}_k ^u (z)}) \right)  \right) \cdot C_2\cdot n^{l\cdot ( \dim \tilde{\rho}_\xi + {o_{\epsilon}(1)})} \\
& \leq & C_1 \cdot  \mathcal{N}_{n^{-l}} \left( A(  \Pi_3 \circ S^{k'} (z,u,\omega))  \right) \cdot  C_2\cdot n^{l\cdot ( \dim \tilde{\rho}_\xi + {o_{\epsilon} (1)})}.
\end{eqnarray*}
Taking $\log$  and dividing by $ l \log n$ we arrive at
\begin{equation} \label{Eq part (1)}
\dim \mu +1 -{o_{\epsilon} (1)} \leq \frac{\log \mathcal{N}_{n^{-l}} \left( A(  \Pi_3 \circ S^{k'} (z,u,\omega))  \right)}{l \log n} + \dim \tilde{\rho}_\xi.
\end{equation}
We remark that in equation \eqref{Eq part (1)} and the following calculations, we can absorb the {$o_{\epsilon}(1)$} factors that we encounter into each other,  which is possible since they are all uniform as {$\epsilon$ goes to $0$}, in a manner dependent only on $\epsilon$. Also, notice that  $k'$ is a measurable function of $\xi$.

Next, applying Theorem \ref{Theorem 9.1} we obtain
\begin{equation} \label{Eq for positive dim ergodic comp}
\dim \mu +1 -{o_{\epsilon} (1)} \leq \frac{\log \mathcal{N}_{n^{-l}} \left( A(  \Pi_3\circ  S^{k'} (z,u,\omega))  \right)}{l \log n} + \frac{h(\tilde{\rho}_\xi, T_n)}{\log n} .
\end{equation}

Now, equation \eqref{Eq for positive dim ergodic comp} holds as long as we are working with an ergodic component such that $\xi \in \Psi$. Recall that $\Theta$ denotes the set of  ergodic components of $R$ that have positive dimension. So, by the definition of $\dim R$, since $\tau(\Psi)> \tau( \Theta) - \epsilon$ and  for $\xi\not \in \Theta$ we have $\dim \mu =0$ for $R_\xi$ almost every $\mu$, via \eqref{Eq for positive dim ergodic comp} we see that
\begin{eqnarray*}
\dim R &=& \int_\Theta  \int  \dim \mu \, dR_\xi (\mu,z,u,\omega) d\tau(\xi) \\
&\leq & \int_\Psi  \int  \dim \mu \, dR_\xi (\mu,z,u,\omega) d\tau(\xi)+\epsilon \\
&\leq & \int_\Psi  \int  \left( \frac{\log \mathcal{N}_{n^{-l}} \left( A(  \Pi_3\circ  S^{k'} (z,u,\omega))  \right)}{l \log n} + \frac{h(\tilde{\rho}_\xi, T_n)}{\log n} -1  \right)  dR_\xi (\mu,z,u,\omega) d\tau(\xi) \\
&+&o_{\epsilon} (1)\\
&\leq & \int_\Theta  \int  \left( \frac{\log \mathcal{N}_{n^{-l}} \left( A(  \Pi_3\circ  S^{k'} (z,u,\omega))  \right)}{l \log n} + \frac{h(\tilde{\rho}_\xi, T_n)}{\log n} \right)  dR_\xi (\mu,z,u,\omega) d\tau(\xi) \\
&-& \tau(\Theta) +o_{\epsilon} (1)\\
&\leq& \int_{\Theta} \int  \frac{\log \mathcal{N}_{n^{-l}} \left( A(  \Pi_3 \circ S^{k'} (z,u,\omega))  \right)}{l \log n}  dR_\xi (\mu,z,u,\omega) d\tau(\xi) + \int_{\Theta} \frac{h(\tilde{\rho}_\xi, T_n)}{\log n}  d \tau(\xi) \\
&-& \tau(\Theta) +{o_{\epsilon} (1)}.
\end{eqnarray*}
where we have used that $\tilde{\rho}_\xi$ is constant when integrated against $R_\xi$. Next, applying \eqref{dim Q},
$$\dim R = \dim Q\geq \gamma$$
We thus arrive at the inequality
\begin{eqnarray} \label{Eq main inequality}
\gamma &\leq& \int_{\Theta} \int  \frac{\log \mathcal{N}_{n^{-l}} \left( A(  \Pi_3 \circ S^{k'} (z,u,\omega))  \right)}{l \log n}  dR_\xi (\mu,z,u,\omega) d\tau(\xi) + \int_{\Theta} \frac{h(\tilde{\rho}_\xi, T_n)}{\log n}  d \tau(\xi) \\
&-& \tau(\Theta) +{o_{\epsilon} (1)}
\end{eqnarray}
This implies Part (1) of Theorem \ref{Key prop}: Indeed, taking {$\epsilon\rightarrow 0$}, using that for every $\xi$ the measure $\tilde{\rho}_\xi$ is supported on $\Pi_2 (F)$ and that $\frac{h(\tilde{\rho}_\xi, T_n)}{\log n} = \dim \tilde{\rho}$, we obtain
$$\gamma \leq \tau(\Theta)\cdot \left( \sup_{\omega \in (\Pi_2 D) ^\mathbb{N}} \overline{\dim}_B A(\omega) + \dim_H \Pi_2 (F) -1 \right) = \tau(\Theta)\cdot (\dim^* F -1)$$
where in the last inequality we made use of Mackay's formula \cite{mackay2011assouad} for $\dim^* F$.

\noindent{\textbf{Proof of Part (2)}} By the $T$ invariance of $R_\xi$, writing $k'=k'(\xi)$ as before and  recalling \eqref{Eq number of returns}
\begin{eqnarray*}
\int_{\Theta} \int \left( \frac{\log \mathcal{N}_{n^{-l}} \left( A( \omega)  \right)}{l \log n} \right) dR_\xi (\mu,z,u,\omega) d\tau(\xi) & =& \\
 \int_{\Theta} \int \left( \frac{\log \mathcal{N}_{n^{-l}} \left( A( \omega)  \right)}{l \log n} \right) dR_\xi T^{k'}(\mu,z,u,\omega) d\tau(\xi) &=& \\
  \int_{\Theta} \int \left( \frac{\log \mathcal{N}_{n^{-l}} \left( A(\sigma^{\mathcal{R}(k',u)} \omega)  \right)}{l \log n} \right) dR_\xi (\mu,z,u,\omega) d\tau(\xi) &=& \\
 \int_{\Theta} \int \left( \frac{\log \mathcal{N}_{n^{-l}} \left( A(  \Pi_3 \circ S^{k'} (z,u,\omega))  \right)}{l\log n} \right) dR_\xi (\mu,z,u,\omega) d\tau(\xi).
\end{eqnarray*}
Combining this with \eqref{Eq main inequality} we obtain
\begin{equation*} 
\gamma  \leq  \int \int \left( \frac{\log \mathcal{N}_{n^{-l}} \left( A (\omega)  \right)}{l \log n} \right) dR_\xi (\mu,z,u,\omega) d\tau(\xi) + \int \frac{h(\tilde{\rho}_\xi, T_n)}{\log n} d \tau(\xi) - \tau(\Theta)+{o_{\epsilon} (1)}.
\end{equation*}
Notice that we have also removed the conditioning on the set $\Theta$ on the right hand side. Since $R_\xi$ is a disintegration of $R$, we  obtain
\begin{equation*} 
\gamma  \leq  \int \left( \frac{\log \mathcal{N}_{n^{-l}} \left( A (\omega)  \right)}{l \log n} \right) dR (\mu,z,u,\omega)  + \int \frac{h(\tilde{\rho}_\xi, T_n)}{\log n} d \tau(\xi)- \tau(\Theta)+{o_{\epsilon} (1)} .
\end{equation*}
Notice that up to an $o_l (1)$ factor,
$$ \frac{\log \mathcal{N}_{n^{-l}} \left( A (\omega)  \right)}{l \log n} = \frac{\log \mathcal{N}_{m^{-[l\cdot \theta]}} \left( A (\omega)  \right)}{ [l\cdot \theta] \log m}+o_l(1).$$
So, combining this with  Lemma \ref{Lemma 0.2}, using the affinity of entropy and letting $l\to\infty$, we get
\begin{equation*} 
\gamma  \leq  \frac{\sum_{j\in \Pi_2 (D)} \nu([j])\log a(j)}{\log m}  + \frac{h(\int \tilde{   \rho}_\xi d \tau(\xi), T_n)}{\log n} - \tau(\Theta)+{o_{\epsilon} (1)}.
\end{equation*}
Finally, $\int \tilde{   \rho}_\xi d \tau(\xi) = \tilde{\rho}$, and $(\tilde{\rho}, T_n)$ is a factor of $(\rho,\sigma)$, so we arrive at 
\begin{equation*} 
\gamma  \leq  \frac{\sum_{j\in \Pi_2 (D)} \nu([j])\log a(j)}{\log m}  + \frac{h(\rho, \sigma)}{\log n} - \tau(\Theta)+{o_{\epsilon} (1)} .
\end{equation*}
Taking $\epsilon\to 0$, this is Part (2) of Theorem \ref{Key prop}.

\noindent{\textbf{Proof of Part (3)}}   By Part (2) we have the following inequality:
\begin{equation} \label{Eq starting point}
\gamma+\tau(\Theta)  \leq \frac{\sum_{j\in \Pi_2 (D)} \nu([j])\log a(j)}{\log m} + \frac{h(\rho,\sigma)}{\log n}.
\end{equation}
Recall that by \eqref{Eq for nu}, \eqref{Eq for eta} and \eqref{Eq for rho}, the measures $\nu, \rho,\eta \in \mathcal{P}( (\Pi_2 D)^\mathbb{N})$ are $\sigma$ invariant and we have
$$ \rho = \theta\cdot \nu+(1-\theta)\cdot \eta.$$
We now show, via the equation above, that the right hand side of \eqref{Eq starting point} is bounded above by
$$ \dim_P F = \dim_B F= \frac{\log | \Pi_2 (D)|}{\log n} + \frac{\log \frac{|D|}{|\Pi_2 (D)|}}{\log m}. $$

To this end, by affinity of entropy, we have
$$h(\rho,\sigma) = h(\theta\cdot \nu+(1-\theta)\cdot \eta,\sigma) = \theta\cdot h( \nu,\sigma)+(1-\theta)\cdot h(\eta,\sigma).$$
Now, by the Kolmogorov-Sinai Theorem and Proposition \ref{Lemma entropy},
$$ h(\eta,\sigma) \leq H(\eta, \mathcal{D}) \leq \log |\Pi_2 (D)|,$$
where $\mathcal{D}$ is the first generation cylinder partition of $(\Pi_2 D)^\mathbb{N}$. By another application of the Kolmogorov-Sinai Theorem,
$$ h(\nu,\sigma) \leq H(\nu, \mathcal{D}) =\sum_{j\in \Pi_2 (D)} \nu([j])\cdot \log \frac{1}{\nu([j])}.$$
So, by the last two displayed inequalities and  recalling that $\theta=\frac{\log n}{\log m}$, we can bound
\begin{equation*}
\frac{\sum_{j\in \Pi_2 (D)} \nu([j])\log a(j)}{\log m} + \frac{h(\rho,\sigma)}{\log n} 
\end{equation*}
\begin{eqnarray*}
&=& \frac{\sum_{j\in \Pi_2 (D)} \nu([j])\log a(j)}{\log m} + \theta \cdot  \frac{h( \nu,\sigma)}{\log n}+(1-\theta) \cdot  \frac{h( \eta,\sigma)}{\log n}  \\
&\leq & \frac{\sum_{j\in \Pi_2 (D)} \nu([j])\log a(j)}{\log m} + \theta \cdot  \frac{\sum_{j\in \Pi_2 (D)} \nu([j])\cdot \log \frac{1}{\nu([j])}}{\log n}+(1-\theta)\cdot  \frac{\log |\Pi_2 (D)|}{\log n}  \\
&= & \frac{\sum_{j\in \Pi_2 (D)} \nu([j])\cdot \left( \log \left( \frac{a(j)}{\sum_j a(j)} \right) +\log  \frac{1}{\nu([j])} \right) +\log \left( \sum_j a(j) \right) }{\log m} +(1-\frac{\log n}{\log m}) \frac{\log |\Pi_2 (D)|}{\log n}\\
&\leq& \frac{\log \left( \sum_j a(j) \right) }{\log m} +(1-\frac{\log n}{\log m}) \frac{\log |\Pi_2 (D)|}{\log n}\\
&=& \frac{\log | \Pi_2 (D)|}{\log n} + \frac{\log \frac{|D|}{|\Pi_2 (D)|}}{\log m} = \dim_P F,
\end{eqnarray*}
where in the fourth inequality we used the Gibbs inequality (see Proposition \ref{Lemma entropy}).
Combining this with \eqref{Eq starting point} we see that
\begin{equation*} 
\gamma+\tau(\Theta)\leq \dim_P F,
\end{equation*}
thus  Part (3) of Theorem \ref{Key prop} is proved. \hfill{$\Box$}

\section{On the proof of Theorem \ref{Main Theorem} part (1)}  \label{Section proof part (1)}
\subsection{A Hausdorff dimension version of Theorem \ref{Key prop}}
The idea for the proof of Theorem \ref{Main Theorem} part (1) is similar to that of Theorem part (2), with some modifications.
Let $F$ be a Bedford-McMullen carpet with exponents $m>n$ and digits $D$, such that $m\not \sim n$. Write $\theta:=\frac{\log n}{\log m}$. Let $\ell_{0}$ be a line. We may assume, as in the proof of Theorem \ref{Main Theorem} part (2), that the slope of $\ell_0$ is $m^{u_0}\neq 0$ for $u_0 \in [0,1)$, and that our ambient space is $\mathbb{T}^2$ rather than $[0,1]^2$.

Let
$$ \gamma_1:= \dim_H \ell_0 \cap F$$
and fix some $\gamma <\gamma_1$. We will show that 
$$\gamma \leq \max \left\lbrace 0,\, \frac{\dim_H F}{\dim^* F} \cdot (\dim^* F-1) \right\rbrace . $$
It is clear that we may assume $\gamma>0$.

By Frostman's Lemma   we may find a probability measure $\mu_0 \in \mathcal{P}(\ell_0 \cap F)$ such that 
$$ \dim(\mu_0,z)\geq \gamma,\quad \text{ for } \mu_0 \text{ almost every } z.$$
In particular, $\mu_0$ is continuous (has no atoms). Fix a point $z_0 \in \ell_0 \cap F$ in the support of $\mu_0$ that satisfies the conclusion of Theorem \ref{Theorem CP exist} part (2).

Write 
 $$z_0=(x_0,y_0) = \left(\sum_{k=1} ^\infty \frac{x_k}{m^k}, \sum_{k=1} ^\infty \frac{y_k}{n^k}\right),\quad (x_k,y_k)\in D.$$
Notice that since $\mu_0$ is continuous, we may assume both $x_0,y_0 \notin \mathbb{Q}$, so that this representation is unique. Now, consider the sequence
$$ \omega_0=(y_1, y_2,...) \in (\Pi_2 D)^\mathbb{N} \subseteq \lbrace 0,...n-1 \rbrace^\mathbb{N}.$$ 
For every $k\in \mathbb{N}$, we define a sequence of measures on $(\Pi_2 D)^\mathbb{N}$:
 \begin{equation} \label{Eq for nu k}
\nu_k = \frac{1}{ [\theta^{-k+1}]} \sum_{k=1} ^{[\theta^{-k+1}]} \delta_{\sigma ^k (\omega_0)},
 \end{equation}
  \begin{equation} \label{Eq for eta k}
\eta_k = \frac{1}{[\theta^{-k}]- [\theta^{-k+1}]} \sum_{k=[\theta^{-k+1}]+1} ^{[\theta^{-k}]} \delta_{\sigma ^k (\omega_0)} .
 \end{equation}
 We shall require the following Claim to choose a subsequence of the scenery. Let $\mathcal{D}$ be the first generation partition of $(\Pi_2 D)^\mathbb{N}$. 
 \begin{Claim} \label{Claim choice of subsequnece}
 There exists a subseqeunce $N_j$ such that
 $$\limsup_{j\rightarrow \infty} \left( H(\eta_{N_j}, \mathcal{D}) - H(\nu_{N_j}, \mathcal{D}) \right) \leq 0.$$
 \end{Claim}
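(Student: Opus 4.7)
The plan is to reduce the claim to proving $\liminf_{k \to \infty}\left(H(\eta_k, \mathcal{D}) - H(\nu_k, \mathcal{D})\right) \leq 0$; once this is established, any subsequence along which the $\liminf$ is attained will serve as the required $N_j$. The argument for this $\liminf$ bound will proceed by contradiction and rest on the concavity of Shannon entropy together with the universal upper bound $H(\mu, \mathcal{D}) \leq \log |\Pi_2 D|$ from Proposition \ref{Lemma entropy}.

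The first step is to observe a key convex-combination identity. Setting $a_k = [\theta^{1-k}]$ and $b_k = [\theta^{-k}] - [\theta^{1-k}]$, by the definitions \eqref{Eq for nu k} and \eqref{Eq for eta k} the measure $\nu_k$ is the time average of $\delta_{\sigma^i \omega_0}$ over $i = 1, \ldots, a_k$ while $\eta_k$ is the time average over $i = a_k+1, \ldots, a_k+b_k$. Since $a_{k+1} = [\theta^{-k}] = a_k + b_k$, one immediately gets
\[
\nu_{k+1} = \frac{a_k}{a_{k+1}}\,\nu_k + \frac{b_k}{a_{k+1}}\,\eta_k,
\]
with $a_k/a_{k+1} \to \theta$ and $b_k/a_{k+1} \to 1 - \theta$ as $k \to \infty$.

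Next, I would suppose for contradiction that $\liminf_{k\to\infty}(H(\eta_k, \mathcal{D}) - H(\nu_k, \mathcal{D})) > 2\varepsilon$ for some $\varepsilon > 0$, so that $H(\eta_k, \mathcal{D}) \geq H(\nu_k, \mathcal{D}) + \varepsilon$ for all large $k$. Applying the concavity of entropy (Proposition \ref{Lemma entropy} part (2)) to the convex combination above yields, for all large $k$,
\[
H(\nu_{k+1}, \mathcal{D}) \geq \frac{a_k}{a_{k+1}}\, H(\nu_k, \mathcal{D}) + \frac{b_k}{a_{k+1}}\, H(\eta_k, \mathcal{D}) \geq H(\nu_k, \mathcal{D}) + \frac{b_k}{a_{k+1}}\,\varepsilon.
\]
Since $b_k/a_{k+1} \to 1 - \theta > 0$, the increments $H(\nu_{k+1}, \mathcal{D}) - H(\nu_k, \mathcal{D})$ are bounded below by $(1-\theta)\varepsilon/2$ for all large $k$, forcing $H(\nu_k, \mathcal{D}) \to \infty$. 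This contradicts the uniform bound $H(\nu_k, \mathcal{D}) \leq \log |\Pi_2 D|$ from Proposition \ref{Lemma entropy} part (1), completing the argument.

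The only delicate point is recognizing the convex-combination identity linking $\nu_k$, $\eta_k$, and $\nu_{k+1}$; after that the proof is a standard concavity-plus-boundedness telescoping argument. This also clarifies the choice of the exponential scales $[\theta^{-k}]$ defining the two sequences: it is arranged precisely so that the weights $a_k/a_{k+1}$ and $b_k/a_{k+1}$ stay uniformly bounded away from $0$ and $1$, which is exactly what makes the telescoping force a contradiction.
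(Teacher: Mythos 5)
Your proof is correct and follows essentially the same route as the paper's: the same convex-combination identity $\nu_{k+1} = \frac{[\theta^{-k+1}]}{[\theta^{-k}]}\nu_k + \frac{[\theta^{-k}]-[\theta^{-k+1}]}{[\theta^{-k}]}\eta_k$, the same use of concavity of entropy, and the same telescoping contradiction against the uniform bound $H(\nu_k,\mathcal{D}) \leq \log|\Pi_2(D)|$. The only cosmetic difference is that you phrase the reduction to the $\liminf$ statement explicitly at the outset, whereas the paper negates the claim directly; both are equivalent.
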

 \begin{proof}
 Suppose towards a contradiction that the Claim is not true. This means that for some $c>0$
$$  \liminf_{k\rightarrow \infty} \left( H(\nu_k, \mathcal{D}) - H(\eta_k, \mathcal{D}) \right) \leq  - c< 0.$$ 
So, for all large enough $k$ we have
 \begin{equation} \label{Eq contr}
 H(\eta_k, \mathcal{D})-H(\nu_k, \mathcal{D}) > \frac{c}{2}.
 \end{equation}
 
The crucial observation here is that
$$ \nu_{k+1} = \frac{[\theta^{-k+1}]}{[\theta^{-k}]} \nu_k + \frac{[\theta^{-k}]-[\theta^{-k+1}]}{[\theta^{-k}]} \cdot \eta_k.$$
So by concavity of entropy (Proposition \ref{Lemma entropy}) we have
$$H(\nu_{k+1}, \mathcal{D}) \geq  \frac{[\theta^{-k+1}]}{[\theta^{-k}]}\cdot H(\nu_k, \mathcal{D}) + \frac{[\theta^{-k}]-[\theta^{-k+1}]}{[\theta^{-k}]}\cdot H(\eta_k, \mathcal{D}).$$
Combining this with \eqref{Eq contr} we find that for all large enough $k$
$$H(\nu_{k+1}, \mathcal{D}) \geq   H(\nu_k, \mathcal{D}) + \frac{[\theta^{-k}]-[\theta^{-k+1}]}{[\theta^{-k}]}\cdot \frac{c}{2}.$$
The latter equation implies that $\lim_{k\rightarrow \infty} H(\nu_{k}, \mathcal{D})  =\infty$, which is a contradiction since for all $k$, 
$$ H(\nu_{k}, \mathcal{D}) \leq \log |\Pi_2 (D)|.$$
\end{proof}

From now on we work with the sequence $N_j$ from Claim \ref{Claim choice of subsequnece}. By Theorem \ref{Theorem CP exist} part (2), by perhaps passing to a further subsequence, there exists a distribution $Q$ such that
\begin{equation*} 
 \frac{1}{N_j} \sum_{k=0} ^{N_j-1} \delta_{M^k (\mu_0,z_0,u_0)} \rightarrow Q
 \end{equation*}
 where $Q$ is a CP distribution with 
\begin{equation*} 
\dim Q \geq \gamma .
\end{equation*}
Next, recalling \eqref{Eq for nu k} and \eqref{Eq for eta k}, by perhaps moving to yet a further subsequence, we assume that there are $\sigma$ invariant measures $\nu,\eta, \rho \in \mathcal{P}((\Pi_2 D)^\mathbb{N})\subseteq \mathcal{P}(\lbrace 0,...n-1 \rbrace^\mathbb{N})$ such that:
 \begin{equation} \label{Eq for nu new}
\nu_{N_j}\rightarrow \nu,
 \end{equation}
  \begin{equation} \label{Eq for eta new}
\eta_{N_j}\rightarrow \eta,
 \end{equation}
  \begin{equation} \label{Eq for rho new}
 \frac{1}{N_j} \sum_{k=1} ^{N_j} \delta_{\sigma ^k (\omega_0)} \rightarrow \rho.
 \end{equation}
It follows from \eqref{Eq for eta k} and \eqref{Eq for nu k} that  $\rho = \theta\cdot \nu + (1-\theta)\cdot \eta$. We also have, by Claim \ref{Claim choice of subsequnece}, the important inequality
\begin{equation} \label{Important inequality}
H(\eta, \mathcal{D})\leq H(\nu,\mathcal{D}).
\end{equation}
 
We can now formulate our required analogue of Theorem \ref{Key prop}:

\begin{theorem} \label{Key prop 2} Let $\lambda= Q( \lbrace \mu:\, \dim \mu >0 \rbrace)$. Then:
\begin{enumerate}
\item $\gamma \leq \lambda\cdot (\dim^* F -1)$.

\item
$$\gamma+\lambda \leq \frac{\sum_{j\in \Pi_2 (D)} \nu([j])\log a(j)}{\log m} + \frac{h(\rho,\sigma)}{\log n}.$$ 

\item $\gamma \leq \dim_H F -\lambda$.
\end{enumerate}
\end{theorem}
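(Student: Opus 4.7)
The plan is to follow the same scheme that proved Theorem \ref{Key prop}, since the setup of Section \ref{Section proof part (1)} mirrors that of Section \ref{Section proof of key prop} almost line-by-line. For Parts (1) and (2), I would first produce the CP distribution $Q$ along the subsequence $N_j$ selected by Claim \ref{Claim choice of subsequnece}, invoking Theorem \ref{Theorem CP exist} part (2) (which is what allows us to fix $N_j$ in advance and still obtain a further subsequence along which the scenery averages of $(\mu_0,z_0,u_0)$ converge). I would then lift $Q$ to the skew-product system $(S,\kappa)$ on $\mathbb{T}^2 \times \mathbb{T} \times (\Pi_2 D)^\mathbb{N}$ as in Lemma \ref{Lemma apply Sinai to S}, verify that $\mathcal{C}$ is a generator with $\kappa$-null atom boundaries (Lemma \ref{Lemma generator}), and apply Sinai's factor Theorem via Theorem \ref{Theorem 6.1} to each ergodic component $R_\xi$ of positive dimension. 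The covering-number inputs of Propositions \ref{Proposition two bounds} and \ref{Proposition third bound} transfer verbatim — the Frostman lower bound $\dim(\mu_0,z) \geq \gamma$ is all that is needed, since Theorem \ref{Theorem CP properties} part (4) is stated for ergodic CPs regardless of whether Hausdorff or packing dimensions are being tracked. Assembling the inequalities exactly as in Section \ref{Section proof of key prop} then yields Parts (1) and (2).

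Part (3) is the genuinely new step: I must upgrade the bound $\gamma + \lambda \leq \dim_P F$ from the packing proof to $\gamma + \lambda \leq \dim_H F$. The crucial new input is inequality \eqref{Important inequality}, $H(\eta,\mathcal{D}) \leq H(\nu,\mathcal{D})$, which is precisely the point of selecting $N_j$ through Claim \ref{Claim choice of subsequnece}. Starting from Part (2), I would expand
$$h(\rho,\sigma) = \theta\, h(\nu,\sigma) + (1-\theta)\, h(\eta,\sigma)$$
using affinity of entropy, bound each of $h(\nu,\sigma)$ and $h(\eta,\sigma)$ by $H(\nu,\mathcal{D}) = \sum_{j \in \Pi_2(D)} \nu([j]) \log(1/\nu([j]))$ via Kolmogorov–Sinai together with \eqref{Important inequality}, and deduce
$$\gamma + \lambda \leq \frac{\sum_{j \in \Pi_2(D)} \nu([j]) \log a(j)}{\log m} + \frac{\sum_{j \in \Pi_2(D)} \nu([j]) \log(1/\nu([j]))}{\log n}.$$
I would then apply the Gibbs inequality from Proposition \ref{Lemma entropy} part (3) with the reference probability vector $q_j := a(j)^\theta / \sum_i a(i)^\theta$ to bound $\sum_j \nu([j]) \log(1/\nu([j])) \leq \log(\sum_i a(i)^\theta) - \theta \sum_j \nu([j]) \log a(j)$. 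Using $\theta = \log n / \log m$, the $\nu$-dependent terms cancel exactly and what remains is $\log(\sum_j a(j)^\theta)/\log n = \dim_H F$ by Theorem \ref{Theorem BMC} part (1), giving Part (3).

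The main obstacle I anticipate is confirming that the pre-chosen subsequence from Claim \ref{Claim choice of subsequnece} is compatible with the CP-convergence machinery, i.e.\ that we can genuinely extract a limit distribution satisfying both $\dim Q \geq \gamma$ and the entropy inequality \eqref{Important inequality} simultaneously. This is why part (2) of Theorem \ref{Theorem CP exist} (as opposed to part (1)) is the correct tool here: it permits us to fix the scale subsequence first and only demand a further refinement for the scenery limit, so the hypotheses of Claim \ref{Claim choice of subsequnece} survive the passage. Once this compatibility is in place, Parts (1) and (2) are a transcription of Section \ref{Section proof of key prop}, and Part (3) is the short entropy-plus-Gibbs computation sketched above.
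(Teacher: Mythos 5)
Your proposal is correct and follows essentially the same route as the paper: parts (1) and (2) are obtained by transcribing the argument of Section \ref{Section proof of key prop} (with Theorem \ref{Theorem CP exist} part (2) used precisely to reconcile the pre-selected subsequence of Claim \ref{Claim choice of subsequnece} with the CP-limit extraction), and part (3) is deduced from part (2) via affinity of entropy, Kolmogorov--Sinai, the inequality \eqref{Important inequality}, and the Gibbs inequality with reference vector $a(j)^\theta/\sum_i a(i)^\theta$, exactly as in the paper's Section 4.2.
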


Theorem \ref{Key prop 2} implies Theorem \ref{Main Theorem} part (1), and this is completely analogues to the implication Theorem \ref{Key prop} $\Rightarrow$ Theorem \ref{Main Theorem} part (2).  The proof of parts (1) and (2) of Theorem \ref{Key prop 2} are  the same as the proof of the corresponding parts of Theorem \ref{Key prop} detailed in Section \ref{Section proof of key prop}. We thus omit the details. It remains to show that Part (2) implies Part (3), and this is the content of the next Section.
\subsection{Proof that part (2) implies part (3) in Theorem \ref{Key prop 2}}
Recall that $\gamma_1=\dim_H F\cap \ell_0$.  By Theorem \ref{Key prop 2} part (2) we have the following inequality:
\begin{equation} \label{Eq starting point 2}
\gamma+\lambda  \leq \frac{\sum_{j\in \Pi_2 (D)} \nu([j])\log a(j)}{\log m} + \frac{h(\rho,\sigma)}{\log n} 
\end{equation}
where  $\nu, \rho,\eta \in \mathcal{P}( \Pi_2 (D)^\mathbb{N})$ are $\sigma$ invariant and we have
$$ \rho = \theta\cdot \nu+(1-\theta)\cdot \eta.$$
We now show, via the equation above and \eqref{Important inequality}, that the right hand side of \eqref{Eq starting point 2} is bounded above by
$$ \dim_H F= \frac{\log \left( \sum_{j\in\Pi_2(D)} a(j)^\theta \right)}{\log n} ,$$
where we recall that $\theta=\frac{\log n}{\log m}$.

To this end, by affinity of entropy, the Kolmogorov-Sinai Theorem, and \eqref{Important inequality}
\begin{eqnarray*}
h(\rho,\sigma)& =& h(\theta\cdot \nu+(1-\theta)\cdot \eta,\sigma) \\
&=& \theta\cdot h( \nu,\sigma)+(1-\theta)\cdot h(\eta,\sigma) \\
&\leq & \theta\cdot H( \nu,\mathcal{D})+(1-\theta)\cdot H(\eta,\mathcal{D}) \\
&\leq & \theta\cdot H( \nu,\mathcal{D})+(1-\theta)\cdot H(\nu,\mathcal{D}) \\
&= &  H(\nu,\mathcal{D}).
\end{eqnarray*}

So,  we can bound
\begin{equation*}
\frac{\sum_{j\in \Pi_2 (D)} \nu([j])\log a(j)}{\log m} + \frac{h(\rho,\sigma)}{\log n} 
\end{equation*}
\begin{eqnarray*}
&\leq & \frac{\sum_{j\in \Pi_2 (D)} \nu([j])\log a(j)}{\log m} + \frac{H(\nu,\mathcal{D}) }{\log n}  \\
&= & \frac{\sum_{j\in \Pi_2 (D)} \nu([j])\log a(j)}{\log m} +  \frac{\sum_{j\in \Pi_2 (D)} \nu([j])\cdot \log \frac{1}{\nu([j])}}{\log n} \\
&=& \frac{1}{\log n} \left( \sum_{j\in \Pi_2 (D)} \nu([j]) \left( \log  \frac{1}{\nu([j])} + \log \left( \frac{a(j)^\theta}{\sum_{j\in \Pi_2 (D)} a(j)^\theta }  \right)\right) + \log \left( \sum_{j\in \Pi_2 (D)} a(j)^\theta \right)  \right)\\
&\leq & \frac{\log \left( \sum_{j\in \Pi_2 (D)} a(j)^\theta \right)}{\log n} = \dim_H F
\end{eqnarray*}
where in the last inequality we used the Gibbs inequality (see Proposition \ref{Lemma entropy}).
Combining this with \eqref{Eq starting point} we see that
\begin{equation*} 
\gamma_1+\lambda \leq \dim_H F.
\end{equation*}
\hfill{$\Box$}

\bibliography{bib}{}
\bibliographystyle{plain}

\end{document}